\numberwithin{equation}{section}
\numberwithin{figure}{section}
\theoremstyle{plain}
\newtheorem{thm}{\protect\theoremname}[section]
  \theoremstyle{plain}
  \theoremstyle{definition}
  \theoremstyle{definition}
  \newtheorem{defn}[thm]{\protect\definitionname}
  \theoremstyle{plain}
  \newtheorem{prop}[thm]{\protect\propositionname}
  \theoremstyle{plain}
  \newtheorem{cor}[thm]{\protect\corollaryname}
  \theoremstyle{definition}
  \newtheorem{example}[thm]{\protect\examplename}
  \theoremstyle{plain}
  \newtheorem{lem}[thm]{\protect\lemmaname}
  \theoremstyle{definition}
  \theoremstyle{definition}
  \newtheorem{setup}[thm]{Setup}
  \providecommand{\conjecturename}{Conjecture}
  \providecommand{\corollaryname}{Corollary}
  \providecommand{\definitionname}{Definition}
  \providecommand{\examplename}{Example}
  \providecommand{\lemmaname}{Lemma}
  \providecommand{\problemname}{Problem}
  \providecommand{\propositionname}{Proposition}
\providecommand{\theoremname}{Theorem}
\providecommand{\claimname}{Claim}
\begin{document}
\global\long\def\connected{\text{highly connected}}
\global\long\def\tra{\mathsf{T}_{q\to p}}
\global\long\def\art{\mathsf{T}^{p\to q}}
 \global\long\def\f{\mathcal{F}}
\global\long\def\pn{\mathcal{P}\left[n\right]}
\global\long\def\g{\mathcal{G}}
\global\long\def\s{\mathcal{S}}
\global\long\def\j{\mathcal{J}}
\global\long\def\d{\mathcal{D}}
\global\long\def\Inf{}
\global\long\def\p{\mathcal{P}}
\global\long\def\h{\mathcal{H}}
\global\long\def\n{\mathbb{N}}
\global\long\def\a{\mathcal{A}}
\global\long\def\b{\mathcal{B}}
\global\long\def\c{\mathcal{C}}
\global\long\def\e{\mathbb{E}}
\global\long\def\x{\mathbf{x}}
\global\long\def\y{\mathbf{y}}
\global\long\def\z{\mathbf{z}}
\global\long\def\c{\mathbf{c}}
\global\long\def\av{\mathsf{A}}
\global\long\def\chop{\mathrm{Chop}}
\global\long\def\stab{\mathrm{Stab}}
\global\long\def\t{\mathsf{T}}
\global\long\def\fabs#1{\hat{|}#1\hat{|}}
\newcommand{\cal }{\mathcal }

\newcommand\comment[1]{\fbox{\tt #1}}
\newcommand\E{\mathop{\mathbb{E}}}
\newcommand\card[1]{\left| {#1} \right|}
\newcommand\sett[2]{\left\{ \left. #1 \;\right\vert #2 \right\}}
\newcommand\settright[2]{\left\{ #1 \;\left\vert\; #2 \right.\right\}}
\newcommand\set[1]{{\left\{ #1 \right\}}}
\newcommand\Prob[2]{{\Pr_{#1}\left[ {#2} \right]}}
\newcommand\BProb[2]{{\Pr_{#1}\Big[ {#2} \Big]}}
\newcommand\bProb[2]{{\Pr_{#1}\big[ {#2} \big]}}
\newcommand\cProb[3]{{\Pr_{#1}\left[ \left. #3 \;\right\vert #2 \right]}}
\newcommand\cbProb[3]{{\Pr_{#1}\big[ \big. #3 \;\big\vert #2 \big]}}
\newcommand\cBProb[3]{{\Pr_{#1}\Big[ \Big. #3 \;\Big\vert #2 \Big]}}
\newcommand\Expect[2]{{\mathop{\mathbb{E}}_{#1}\left[ {#2} \right]}}
\newcommand\bExpect[2]{{\mathop{\mathbb{E}}_{#1}\big[ {#2} \big]}}
\newcommand\BExpect[2]{{\mathop{\mathbb{E}}_{#1}\Big[ {#2} \Big]}}
\newcommand\cProbright[3]{{\Pr_{#1}\left[ #3\;\left\vert  #2\right. \right]}}
\newcommand\cExpect[3]{{\mathbb{E}_{#1}\left[ \left. #3 \;\right\vert #2 \right]}}
\newcommand\norm[1]{\| #1 \|}
\newcommand\power[1]{\set{0,1}^{#1}}
\newcommand\ceil[1]{\lceil{#1}\rceil}
\newcommand\round[1]{\left\lfloor{#1}\right\rfloor}
\newcommand\half{{1\over2}}
\newcommand\quarter{{1\over4}}
\newcommand\eigth{{1\over8}}
\newcommand\defeq{\stackrel{def}{=}}
\newcommand\skipi{{\vskip 10pt}}
\newcommand\sqrtn{ {\sqrt n} }
\newcommand\cube[1]{ { \set{-1,1}^{#1} } }
\newcommand\minimum[2] {  \min\left\{{{#1}, {#2}}\right\}   }
\newcommand\spn{{\sf Span}}
\newcommand\spnn[1]{{\sf Span}({#1})}
\newcommand\inner[2]{\langle{#1},{#2}\rangle}
\newcommand\eps{\varepsilon}
\newcommand\sgn[1]{sign({#1})}
\newcommand*\xor{\mathbin{\oplus}}
\newcommand*\bxor{\mathbin{\bigoplus}}
\newcommand*\Hell[2]{{H\left({#1}, \ {#2}\right)}}
\newcommand*\cHell[4]{{H\left({#1}\vert({#2}),\ {#3}\vert({#4}) \right)}}
\newcommand*\dimm{{\sf dim}}
\newcommand*\tp{t}
\newcommand*\R{\mathbb{R}}
\newcommand*\mult[1]{\vec{#1}}
\newcommand*\zin[2]{\mu_{{\sf in}({#1})}({#2})}
\newcommand*\zout[2]{\mu_{{\sf out}({#1})}({#2})}
\newcommand*\zinout[3]{\mu_{{\sf in}({#1}), {\sf out}({#2})}({#3})}
\newcommand\lvlfn[2]{{#1}_{={#2}}}
\newcommand\approxlvlfn[2]{{#1}_{\approx{#2}}}
\newcommand{\cupdot}{\mathbin{\mathaccent\cdot\cup}}
\newcommand{\M}{\mathcal{M}}
\newcommand{\indeg[1]}{{\sf in}\text{-}{\sf deg}({#1})}
\newcommand{\outdeg[1]}{{\sf out}\text{-}{\sf deg}({#1})}
\newcommand{\NP}{\sf NP}

\newcommand{\mc}[1]{\mathcal{#1}}
\newcommand{\mb}[1]{\mathbb{#1}}
\newcommand{\mf}[1]{\mathfrak{#1}}
\newcommand{\nib}[1]{\noindent {\bf #1}}
\newcommand{\nim}[1]{\noindent {\em #1}}
\newcommand{\brac}[1]{\left( #1 \right)}
\newcommand{\bracc}[1]{\left\{ #1 \right\}}
\newcommand{\bsize}[1]{\left| #1 \right|}
\newcommand{\brak}[1]{\left[ #1 \right]}
\newcommand{\bgen}[1]{\left\langle #1 \right\rangle}
\newcommand{\sgen}[1]{\langle #1 \rangle}
\newcommand{\bfl}[1]{\left\lfloor #1 \right\rfloor}
\newcommand{\bcl}[1]{\left\lceil #1 \right\rceil}

\newcommand{\sub}{\subset}
\newcommand{\subn}{\subsetneq}
\newcommand{\sups}{\supseteq}
\newcommand{\ex}{\mbox{ex}}
\newcommand{\nsub}{\notsubseteq}

\newcommand{\lra}{\leftrightarrow}
\newcommand{\Lra}{\Leftrightarrow}
\newcommand{\Ra}{\Rightarrow}
\newcommand{\sm}{\setminus}
\newcommand{\ov}{\overline}
\newcommand{\ul}{\underline}
\newcommand{\wt}{\widetilde}
\newcommand{\es}{\emptyset}
\newcommand{\pl}{\partial}
\newcommand{\wg}{\wedge}

\newcommand{\la}{\leftarrow}
\newcommand{\ra}{\rightarrow}
\newcommand{\da}{\downarrow}
\newcommand{\ua}{\uparrow}

\newcommand{\aA}{\alpha}
\newcommand{\bB}{\beta}
\newcommand{\gG}{\gamma}
\newcommand{\dD}{\delta}
\newcommand{\iI}{\iota}
\newcommand{\kK}{\kappa}
\newcommand{\zZ}{\zeta}
\newcommand{\lL}{\lambda}
\newcommand{\tT}{\theta}
\newcommand{\sS}{\sigma}
\newcommand{\oO}{\omega}
\newcommand{\ups}{\upsilon}

\newcommand{\Tt}{\Theta}
\newcommand{\DD}{\Delta}
\newcommand{\Oo}{\Omega}

\newcommand{\xb}{\boldsymbol{x}}
\newcommand{\yb}{\boldsymbol{y}}
\newcommand{\zb}{\boldsymbol{z}}

\title{Hypercontractivity for global functions and sharp thresholds}

\author{Peter Keevash \qquad Noam Lifshitz \qquad Eoin Long \qquad Dor Minzer}

\thanks{Research supported in part by ERC Consolidator Grant 647678 (PK)
and by NSF grant CCF-1412958 and Rothschild Fellowship (DM)}

\begin{abstract}

The classical hypercontractive inequality for the noise operator
on the discrete cube plays a crucial role in many of the
fundamental results in the Analysis of Boolean functions,
such as the KKL (Kahn-Kalai-Linial) theorem,
Friedgut's junta theorem and the invariance principle
of Mossel, O'Donnell and Oleszkiewicz.
In these results the cube is equipped
with the uniform ($1/2$-biased) measure,
but it is desirable, particularly for applications
to the theory of sharp thresholds, to also obtain
such results for general $p$-biased measures.
However, simple examples show that when $p$ is small
there is no hypercontractive inequality
that is strong enough for such applications.

In this paper, we establish an effective hypercontractivity
inequality for general $p$ that applies to `global functions',
i.e.\ functions that are not significantly
affected by a restriction of a small set of coordinates.
This class of functions appears naturally,
e.g.\ in Bourgain's sharp threshold theorem,
which states that such functions exhibit a sharp threshold.
We demonstrate the power of our tool by
strengthening Bourgain's theorem,
thereby making progress on a conjecture of Kahn and Kalai.
An additional application of our hypercontractivity theorem,
is a $p$-biased analog of the seminal invariance principle
of Mossel, O'Donnell, and Oleszkiewicz. In a companion paper,
we give applications to the solution of two open problems
in Extremal Combinatorics.
\end{abstract}

\maketitle

\section{Introduction}

The field of analysis of Boolean functions is centered around the
study of functions on the discrete cube $\{0,1\}^{n}$,
via their Fourier--Walsh expansion,
often using the classical hypercontractive inequality
for the noise operator, obtained independently
by Bonami \cite{bonami1970etude},
Gross \cite{gross1975logarithmic}
and Beckner \cite{beckner1975inequalities}.
In particular, the fundamental KKL theorem of Kahn, Kalai and Linial \cite{kahn1988influence} applies hypercontractivity to obtain
structural information on Boolean valued functions with
small `total influence' / `edge boundary'
(see Section \ref{subsec:isoinf});
such functions cannot be `global':
they must have a co-ordinate with large influence.

The theory of sharp thresholds is closely connected
(see Section \ref{sec:kahn kalai}) to the structure
of Boolean functions of small total influence,
not only in the KKL setting of uniform measure on the cube,
but also in the general $p$-biased setting.
However, we will see below that the hypercontractivity theorem
is ineffective for small $p$.
This led Friedgut \cite{friedgut1999sharp},
Bourgain \cite[appendix]{friedgut1999sharp}, and Hatami \cite{hatami2012structure} to develop new ideas for proving
$p$-biased analogs of the KKL theorem.
The theme of these works
can be roughly summarised by the statement:\
an effective analog of the KKL theorem holds
for a certain class of `global' functions.
However, these theorems were incomplete
in two important respects:
\begin{itemize}
\item{\emph{Sharpness:}}
Unlike the KKL theorem,
they are not sharp up to constant factors.
\item{\emph{Applicability:}}
They are only effective in the
`dense setting' when $\mu_p(f)$
is bounded away from $0$ and $1$,
whereas the `sparse setting' $\mu_p(f)=o(1)$
is needed for many important open problems.
\end{itemize}
In particular, a sparse analogue of the KKL theorem is a key missing
ingredient in a strategy of Kahn and Kalai \cite{kahn2007thresholds}
for settling their well-known conjecture relating
critical probabilities to expectation thresholds.

\subsection*{Main result}

The fundamental new contribution of this paper
is a hypercontractive theorem for functions that are `global'
(in a sense made precise below). This has many applications,
of which the most significant are as follows.
\begin{itemize}
\item We strengthen Bourgain's Theorem
by obtaining an analogue of the KKL theorem
that is both quantitively tight and
applicable in the sparse regime.
\item We obtain a sharp threshold result
for global monotone functions in the spirit
of the Kahn-Kalai conjecture, bounding the ratio between
the critical probability (where $\mu_p(f)=\frac{1}{2}$)
and the smallest $p$ for which $\mu_p(f)$ is non-negligible.
\item We obtain a $p$-biased generalisation of
the seminal invariance principle
of Mossel, O'Donnell and Oleszkiewicz \cite{mossel2010noise}
(itself a generalisation of the Berry-Esseen theorem
from linear functions to polynomials of bounded degree),
thus opening the door to $p$-biased versions of its
many striking applications in Hardness of Approximation
and Social Choice Theory
(see O'Donnell \cite[Section 11.5]{o2014analysis})
and Extremal Combinatorics (see Dinur--Friedgut--Regev
\cite{dinur2008independent}).
\end{itemize}

\subsection{Hypercontractivity of global functions} \label{subsec:hyper}

Before formally stating our main theorem,
we start by recalling (the $p$-biased version of)
the classical hypercontractive inequality.
Let\footnote{The case where $p>\frac{1}{2}$ is similar.}
$p \in \left(0,\frac{1}{2}\right]$.
For $r \ge 1$ we write $\|\cdot\|_r$
(suppressing $p$ from our notation)
for the norm on $L^r(\{0,1\}^n,\mu_p)$.

\begin{defn}[Noise operator]
  For  $x \in \{0,1\}^n$
  we define the $\rho$-correlated distribution
$N_{\rho}(x)$ on $\left\{ 0,1\right\} ^{n}$: a sample
$\mathbf{y}\sim N_\rho(x)$ is obtained by, independently for each $i$
setting $\mathbf{y}_i=x_i$  with probability $\rho$,
or otherwise (with probability $1-\rho$) we resample $\mathbf{y}_i$
with $\mb{P}(\mathbf{y}_i=1)=p$. We define
the noise operator $\mathrm{T}_\rho$ on $L^2(\{0,1\}^n,\mu_p)$ by
\[
\mathrm{T}_{\rho}\left(f\right)\left(x\right)=\mathbb{E}_{\yb
\sim N_{\rho}\left(x\right)}\left[f\left(\yb\right)\right].
  \]
\end{defn}

H\"older's inequality gives $\|f\|_r\le\|f\|_s$ whenever $r\le s$.
The hypercontractivity theorem gives an inequality
in the other direction after applying noise to $f$;
for example, for $p=1/2$, $r=2$ and $s=4$ we have
\[\|\mathrm{T}_{\rho}f\|_4 \le \|f\|_2\]
for any $\rho\le\frac{1}{\sqrt{3}}$.
A similar inequality also holds when $p=o(1)$,
but the correlation $\rho$ has to be so small
that it is not useful in applications;
e.g.\  if $f(x)=x_1$
(the `dictator' or `half cube'),
then $\|f\|_2 = \sqrt{\mu_p(f)} = \sqrt{p}$ and
$\mathrm{T}_{\rho}f(x)
= \mathbb{E}_{\yb \sim N_{\rho}\left(x\right)} \mathbf{y}_1
= \rho x_1 + (1-\rho)p$,
so $\|\mathrm{T}_{\rho}f\|_4
> (\mb{E}[\rho^4 x_1^4])^{1/4} = \rho p^{1/4}$.
Thus we need $\rho = O(p^{1/4})$ to obtain any
hypercontractive inequality for general $f$.

\subsection*{Local and global functions}
To resolve this issue, we note that the tight examples
for the hypercontractive inequality are \emph{local},
in the sense that a small number of
coordinates can significantly influence the output of the function.
On the other hand, many functions of interest are \emph{global},
in the sense that a small
number of coordinates can change the output of the function only with a
negligible probability; such global functions appear naturally
in Random Graph Theory \cite{achlioptas1999sharp},
Theoretical Computer Science \cite{friedgut1999sharp}
and Number Theory \cite{friedgut2016sharp}.
Our hypercontractive inequality will show that
constant noise suffices for functions that are global
in a sense captured by \emph{generalised influences},
which we will now define.

Let $f\colon\left\{ 0,1\right\} ^{n}\to\mathbb{R}$.
For $S\sub[n]$ and $x\in\{0,1\}^S$, we write $f_{S\to x}$
for the function obtained from $f$ by restricting the
coordinates of $S$ according to $x$
(if $S=\{i\}$ is a singleton we simplify notation to $f_{i\to x}$).
We write $|x|$ for the number of ones in $x$.
For $i \in [n]$, the $i$th influence is
$\mathrm{I}_i(f) = \| f_{i \to 1} - f_{i \to 0} \|_2^2$,
where the norm is with respect to the implicit measure $\mu_p$. In general,
we define the influence with respect to any $S \sub [n]$
by sequentially applying the operators $f\mapsto f_{i\to 1}-f_{i \to
  0}$ for all $i \in S$, as follows.
\begin{defn}
\label{def:generalised influences}
For $f\colon\left\{ 0,1\right\} ^{n}\to\mathbb{R}$ and $S \sub [n]$
we let (suppressing $p$ in the notation)
\[
\mathrm{I}_{S}\left(f\right)=\mathbb{E}_{\mu_{p}}\bigg [\Big (\sum_{x\in\left\{ 0,1\right\} ^{S}}\left(-1\right)^{\left|S\right|-\left|x\right|}f_{S\to x}\Big )^{2}\bigg ].
\]
We say $f$ has \emph{$\bB$-small generalised influences} if
$\mathrm{I}_S(f)\le\bB\ \mb{E}[f^2]$ for all $S\subseteq\left[n\right].$
\end{defn}

The reader familiar with the KKL theorem and the invariance principle
may wonder why it is necessary to introduce generalised influences
rather than only considering influences (of singletons).
The reason is that under the uniform measure
the properties of having small influences
or small generalised influences are qualitatively equivalent,
but this is no longer true in the $p$-biased setting
for small $p$ (consider $f(x) =
\frac{x_1x_2 +\cdots +x_{n-1}x_n}{\|x_1x_2 +\cdots +x_{n-1}x_n\|}$).

We are now ready to state our main theorem,
which shows that global\footnote{Strictly speaking,
our assumption is stronger than the most natural notion
of global functions: we require all generalised influences
to be small, whereas a function should be considered global
if it has small generalised influences
$I_S(f)$ for small sets $S$.
However, in practice, the hypercontractivity Theorem
is typically applied to low-degree truncations of Boolean functions
(see Section \ref{sec:practice}) ,
when there is no difference between these notions,
as $I_S(f)=0$ for large $S$.} functions are
hypercontractive for a noise operator with a constant rate.
Moreover, our result applies to general $L^r$ norms and product spaces
(see Section \ref{sec:hyp}), but for simplicity here we just highlight
the case of $(4,2)$-hypercontractivity in the cube.

\begin{thm} \label{thm:Hypercontractivity}
Let $p \in \left(0,\frac{1}{2}\right]$.
Suppose $f\in L^{2}\left(\left\{ 0,1\right\} ^{n},\mu_{p}\right)$
has $\bB$-small generalised influences (for $p$).
Then $\|\mathrm{T}_{1/5} f\|_{4} \le \bB^{1/4} \|f\|_2$.
\end{thm}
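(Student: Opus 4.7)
The natural approach is to induct on $n$ using the product structure of $\mathrm{T}_{\rho}$. Fix the last coordinate and decompose $f = g + \chi_{n} h$ where $g = \mathbb{E}_{x_{n}} f$ and $h = \sqrt{p(1-p)}\, D_{n} f$ are functions on $\{0,1\}^{n-1}$, and $\chi_{n}(x_{n}) = (x_{n}-p)/\sqrt{p(1-p)}$. Orthogonality gives $\|f\|_{2}^{2} = \|g\|_{2}^{2} + \|h\|_{2}^{2}$, and since $\mathrm{T}_{\rho}$ is a tensor product with $\mathrm{T}_{\rho}\chi_{n} = \rho\chi_{n}$, one has $\mathrm{T}_{\rho}f = \mathrm{T}_{\rho}g + \rho\chi_{n}\mathrm{T}_{\rho}h$, so that
\[
\|\mathrm{T}_{\rho}f\|_{4}^{4} = \|\mathrm{T}_{\rho}g\|_{4}^{4} + 6\rho^{2}\,\mathbb{E}[(\mathrm{T}_{\rho}g)^{2}(\mathrm{T}_{\rho}h)^{2}] + 4\rho^{3}\mathbb{E}[\chi_{n}^{3}]\,\mathbb{E}[(\mathrm{T}_{\rho}g)(\mathrm{T}_{\rho}h)^{3}] + \rho^{4}\mathbb{E}[\chi_{n}^{4}]\,\|\mathrm{T}_{\rho}h\|_{4}^{4}.
\]
The base case $n=1$ is a direct two-point calculation: write $f = a + b\chi$, use $\mathbb{E}[\chi^{4}] = (1 - 3p(1-p))/(p(1-p))$ and $|\mathbb{E}[\chi^{3}]| \le 1/\sqrt{p(1-p)}$, and observe that the globalness condition $b^{2} \le \beta\, p(1-p)(a^{2}+b^{2})$ is precisely what is needed to cancel the $1/(p(1-p))$ factors in these moments and yield $\|\mathrm{T}_{\rho}f\|_{4}^{4} \le \beta(a^{2}+b^{2})^{2}$ at $\rho = 1/5$.

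For the inductive step, I would first verify the transformation of influences under the decomposition: for $S\subseteq [n-1]$ one has $\mathrm{I}_{S}(g) \le \mathrm{I}_{S}(f)$ and $\mathrm{I}_{S}(h) = p(1-p)\,\mathrm{I}_{S\cup\{n\}}(f)$. The crucial consequence is that $h$ carries a built-in factor of $p(1-p)$ in all its generalised influences. Feeding this into the inductive hypothesis for $\|\mathrm{T}_{\rho}h\|_{4}^{4}$ and substituting into the expansion above, the factor $p(1-p)$ exactly neutralises the $1/(p(1-p))$ blow-up of $\mathbb{E}[\chi_{n}^{4}]$ in the final term. Similarly, H\"older's inequality on $\mathbb{E}[(\mathrm{T}_{\rho}g)(\mathrm{T}_{\rho}h)^{3}]$ produces a factor $(p(1-p))^{3/4}$ that, combined with the $1/\sqrt{p(1-p)}$ in $\mathbb{E}[\chi_{n}^{3}]$, leaves only a benign $(p(1-p))^{1/4} \le 2^{-1/2}$ residue. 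Every would-be singularity in $p$ is thus absorbed by the globalness of $h$, leaving purely combinatorial constants to control at $\rho = 1/5$.

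The main obstacle I anticipate is that the cleanest inductive hypothesis $\|\mathrm{T}_{\rho}f\|_{4}^{4} \le \beta\|f\|_{2}^{4}$ does \emph{not} close under this decomposition: induction applied to $g$ only yields $\|\mathrm{T}_{\rho}g\|_{4}^{4} \le \beta \|g\|_{2}^{2}\|f\|_{2}^{2}$ (because the globalness parameter of $g$ relative to its own $L^{2}$ norm is $\beta\|f\|_{2}^{2}/\|g\|_{2}^{2}$), and the remaining cross-term contributions then push the total above $\beta \|f\|_{2}^{4}$, failing in the extreme case $\|h\|_{2}\to 0$. The resolution requires strengthening the inductive hypothesis so that it is stable under restriction -- for instance replacing $\beta\|f\|_{2}^{2}$ by the scale-invariant quantity $\max_{S}\mathrm{I}_{S}(f)$ throughout, so that the bound reads $\|\mathrm{T}_{\rho}f\|_{4}^{4} \le (\max_{S}\mathrm{I}_{S}(f))\|f\|_{2}^{2}$ -- and then controlling the two cross terms by H\"older and AM--GM in a way that tracks the $\|g\|_{2}$ versus $\|h\|_{2}$ balance and fits the excess into the slack left by the two main terms. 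Confirming that the resulting numerical constants actually line up at $\rho = 1/5$ is the delicate heart of the argument, and is precisely the place where the specific choice of noise rate is used.
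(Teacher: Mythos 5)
Your proposal takes a genuinely different route from the paper: you propose a direct tensorization induction on $n$ inside the $p$-biased cube, decomposing $f=g+\chi_n h$, whereas the paper uses a Lindeberg-style replacement method, interpolating one coordinate at a time from the $p$-biased basis $\{\chi^p_i\}$ to the uniform basis $\{\chi^{1/2}_i\}$ (with a noise boost from $\rho$ to $2\rho$ on the replaced coordinates), collecting error terms of the form $3\lambda\rho^4\|\mathrm{T}^t_{2\rho,\rho}((\mathrm{D}_t f)_t)\|_4^4$ along the way, and finishing by applying \emph{ordinary} hypercontractivity on $\mu_{1/2}$ to every term. Your observation that the generalised influences of $h$ carry a built-in factor $\sigma^2=p(1-p)$ (so that $\mathbb{E}[\chi_n^4]\approx\sigma^{-2}$ is neutralised) is correct, and your $n=1$ base case does check out.

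However, the inductive step has a real gap that your proposed fix does not close, and it is in the degree-$2$ cross term rather than the degree-$3$ one you focus on. Working with your strengthened hypothesis $\|\mathrm{T}_\rho f\|_4^4\le M_f\|f\|_2^2$ where $M_f:=\max_S\mathrm{I}_S(f)$, and writing $a=\|g\|_2$, $b=\|h\|_2$, one has $M_g\le M_f$ and $M_h\le\sigma^2 M_f$, so the main terms give $\|\mathrm{T}_\rho g\|_4^4\le M_f\,a^2$ and $\rho^4\mathbb{E}[\chi_n^4]\|\mathrm{T}_\rho h\|_4^4\le\rho^4 M_f\,b^2$. The crucial point is that the first bound \emph{already saturates} the entire $a^2$-budget of the target $M_f(a^2+b^2)$: the only slack available is $(1-\rho^4)M_f\,b^2$. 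But Cauchy--Schwarz on the cross term gives
\[
6\rho^2\,\mathbb{E}\bigl[(\mathrm{T}_\rho g)^2(\mathrm{T}_\rho h)^2\bigr]\;\le\;6\rho^2\|\mathrm{T}_\rho g\|_4^2\|\mathrm{T}_\rho h\|_4^2\;\le\;6\rho^2\sigma\,M_f\,ab,
\]
and this inequality can be essentially tight (take $h$ a scalar multiple of $g$). To fit it into the $b^2$-slack one would need $6\rho^2\sigma\,(a/b)\le 1-\rho^4-\cdots$, which fails as soon as $a/b$ is large; no choice of $\rho$ helps because $a/b$ is unbounded. The same defect persists for every multiplicative form of the hypothesis, because there is simply no room on the $a^2$ side. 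The replacement method avoids this precisely because it does not compare $\|\mathrm{T}_\rho f\|_4^4$ directly against a budget: in Lemma \ref{lem:Replacement step}, after conditioning, the comparison is $\mathbb{E}[(\rho d\chi^p_t+e)^4]\le\mathbb{E}[(2\rho d\chi^{1/2}_t+e)^4]+3\lambda\rho^4 d^4$, and the uniform side contributes an extra $18\rho^2 d^2 e^2$ (from the noise boost $\rho\mapsto 2\rho$ and from $\mathbb{E}[(\chi^{1/2}_t)^3]=0$) which absorbs the troublesome $p$-biased cross terms. If you want to make a one-coordinate-at-a-time argument work, you should not try to close a monolithic bound on $\|\mathrm{T}_\rho f\|_4^4$; instead prove an inequality in the spirit of Lemma \ref{hypinduct}, expressing the $4$-norm as a sum over $S\sub[n]$ of $4$-norms of derivatives on the uniform cube, and only apply hypercontractivity at the very end.
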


We now move on to demonstrate the power
of global hypercontractivity in the contexts
of isoperimetry, noise sensitivity, sharp thresholds, and invariance.
We emphasise that Theorem \ref{thm:Hypercontractivity} is the only new
ingredient required for these applications, so we expect that it will
have many further applications to generalising results proved via
usual hypercontractivity on the cube with uniform measure.

\subsection{Isoperimetry and influence} \label{subsec:isoinf}

Stability of isoperimetric problems is a prominent open problem at the interface of Geometry, Analysis and Combinatorics.
This meta-problem is to characterise sets whose boundary is close to
the minimum possible given their volume; there are many specific problems
obtained by giving this a precise meaning. Such results in Geometry
were obtained for the classical setting of
Euclidean Space by Fusco, Maggi and Pratelli \cite{fusco2008iso}
and for Gaussian Space by Mossel and Neeman \cite{mossel2015robust}.

The relevant setting for our paper is that of the cube $\{0,1\}^n$,
endowed with the $p$-biased measure $\mu_p$. We refer to this problem
as the ($p$-biased) edge-isoperimetric stability problem.
We identify any subset of $\{0,1\}^n$ with its characteristic
Boolean function $f:\{0,1\}^n \to \{0,1\}$,
and define its `boundary' as the (total) influence\footnote{
Everything depends on $p$, which we fix and suppress in our notation.}
\[\mathrm{I}\left[f\right] =\sum_{i=1}^{n}\mathrm{I}_{i}\left[f\right],
\text{ where each }
\mathrm{I}_i\left[f\right]=\Pr_{\xb\sim\mu_{p}}
\left[f\left(\xb\oplus e_{i}\right)\ne f\left(\xb\right)\right],\]
i.e.\ the \emph{$i$th influence} $\mathrm{I}_i\left[f\right]$
of $f$ is the probability that $f$ depends on bit $i$
at a random input according to $\mu_p$.
(The notion of influence for real-valued functions,
given in Section \ref{subsec:hyper}, coincides with this notion
for Boolean-valued functions).
When $p=1/2$ the total influence corresponds to the classical
combinatorial notion of edge-boundary\footnote{
For the vertex boundary, stability results
showing that approximately isoperimetric sets
are close to Hamming balls were obtained independently by
Keevash and Long \cite{keevash2018vertexiso}
and by Przykucki and Roberts \cite{prz2018vertexiso}.}.

The KKL theorem of Kahn, Kalai and Linial \cite{kahn1988influence}
concerns the structure of functions $f:\{0,1\}^n \to \{0,1\}$,
considering the cube under the uniform measure,
with variance bounded away from $0$ and $1$
and with total influence is upper bounded by some number $K$.
It states that $f$ has a coordinate with influence
at least $e^{-O\left(K\right)}$.
The tribes example of Ben-Or and Linial \cite{ben1990collective}
shows that this is sharp.

\subsection*{$p$-biased versions}
The $p$-biased edge-isoperimetric stability problem is
somewhat understood in the \emph{dense regime}
(where $\mu_p\left(f\right)$ is bounded away from $0$ and $1$)
especially for Boolean functions $f$ that are \emph{monotone}
(satisfy $f\left(x\right)\le f\left(y\right)$
whenever all $x_{i}\le y_{i}$).
Roughly speaking, most edge-isoperimetric stability results in the dense regime say that Boolean functions of small influence have some
`local' behaviour (see the seminal works
of Friedgut--Kalai \cite{friedgut1996every},
Friedgut \cite{friedgut1998boolean, friedgut1999sharp},
Bourgain \cite[Appendix]{friedgut1999sharp},
and Hatami \cite{hatami2012structure}).
In particular, Bourgain
(see also \cite[Chapter 10]{o2014analysis})
showed that for any monotone Boolean function $f$
with $\mu_{p}\left(f\right)$ bounded away from $0$ and $1$ and
$pI\left[f\right]\le K$
there is a set $J$ of $O\left(K\right)$ coordinates
such that $\mu_{p}\left(f_{J\to1}\right)
\ge\mu_{p}\left(f\right)+e^{-O\left(K^{2}\right)}$.
This result is often interpreted as `almost isoperimetric (dense) subsets of the
$p$-biased cube must be local' or on the contrapositive as `global
functions have large total influence'. Indeed, if a
restriction of a small set of coordinates can significantly boost the $p$-biased measure of a function, then this intuitively means
that it is of a local nature.

For monotone functions, the conclusion in Bourgain's theorem
is equivalent (see Section \ref{sec:equiv})
to having some set $J$ of size $O(K)$ with
$\mathrm{I}_J\left(f\right) \ge e^{-O\left(K^2\right)}$.
Thus Bourgain's theorem can be viewed as a
$p$-biased analog of the KKL theorem, where influences are replaced
by generalised influences. However, unlike the KKL Theorem,
Bourgain's result is not sharp,
and the anti-tribes example of Ben-Or and Linial only shows that
the $K^2$ term in the exponent cannot drop below $K$.

As a first application of our hypercontractivity theorem we replace the
term $e^{-O(K^2)}$ by the term $e^{-O(K)}$, which is sharp by Ben-Or
and Linial's example, see Section \ref{sec:equiv}.

\begin{thm}
\label{thm:Bourgain+}
Let $p\in\left(0,\frac{1}{2}\right]$, and let  $f\colon\left\{ 0,1\right\} ^{n}\to\left\{ 0,1\right\} $
be a monotone Boolean function
with $\mu_{p}\left(f\right)$ bounded away from $0$ and $1$
and $\mathrm{I}\left[f\right]\le\frac{K}{p}$.
Then there is a set $J$ of $O\left(K\right)$ coordinates such that
$\mu_{p}\left(f_{J\to1}\right)\ge\mu_{p}\left(f\right)+e^{-O\left(K\right)}$.
\end{thm}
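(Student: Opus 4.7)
The plan is to mirror the classical Kahn--Kalai--Linial argument, using Theorem \ref{thm:Hypercontractivity} in place of the standard Bonami--Beckner inequality and arguing by contrapositive. Assume that no set $J$ of size at most $C_{1}K$ satisfies $\mu_{p}(f_{J\to1})\ge\mu_{p}(f)+e^{-c_{1}K}$; the goal is to derive $p\mathrm{I}[f]>K$. By the equivalence for monotone Boolean functions recorded in Section \ref{sec:equiv}, the contrapositive hypothesis translates to $\mathrm{I}_{J}(f)\le e^{-c_{2}K}$ for all $|J|\le C_{1}K$, where $c_{2}$ depends on $c_{1}$ and $C_{1}$.

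For each coordinate $i$, let $g_{i}=(D_{i}f)^{\le C_{1}K}$ be the low-degree truncation of the discrete derivative $D_{i}f=f_{i\to1}-f_{i\to0}$ in the $p$-biased Fourier--Walsh basis. Then $\mathrm{I}_{S}(g_{i})$ vanishes for $|S|>C_{1}K$ and otherwise coincides with $\mathrm{I}_{S\cup\{i\}}(f)$, so all generalised influences of $g_{i}$ are uniformly bounded by $e^{-c_{2}K}$. Applying Theorem \ref{thm:Hypercontractivity} to $g_{i}$ and then combining with the H\"older trick $\|h\|_{2}^{2}\le\|h\|_{4}\|h\|_{4/3}$ and the Boolean identity $\|D_{i}f\|_{r}^{r}=\mathrm{I}_{i}[f]$ yields an estimate of the shape
\[
\|\mathrm{T}_{\rho}g_{i}\|_{2}^{2}\le e^{-c_{3}K}\,\mathrm{I}_{i}[f]^{\alpha},
\]
for constants $\rho,\alpha,c_{3}>0$ independent of $p$. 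Summing over $i$, the left-hand side is, after unpacking $p$-biased Fourier coefficients, a weighted Laplacian of order $\rho^{O(K)}\mathrm{Var}(f^{\le C_{1}K})/(p(1-p))$, while the right-hand side totals at most $e^{-c_{3}K}\mathrm{I}[f]$, since $\max_{i}\mathrm{I}_{i}[f]\le1$.

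To close the argument I would verify that $\mathrm{Var}(f^{\le C_{1}K})=\Omega(1)$, using the Markov bound $\sum_{|S|>C_{1}K}\hat{f}(S)^{2}\le p(1-p)\mathrm{I}[f]/(C_{1}K)\le1/C_{1}$ together with the dense hypothesis $\mathrm{Var}(f)=\Omega(1)$. Combining the two inequalities gives $p\mathrm{I}[f]\ge e^{c_{4}K}$ for appropriately chosen constants, contradicting $p\mathrm{I}[f]\le K$. The main obstacle --- and the place where Theorem \ref{thm:Hypercontractivity} is essential --- is that both $\rho$ and the hypercontractive factor must be of constant order (not $p^{O(1)}$) so that the competing exponents $e^{-c_{3}K}$ and $\rho^{O(K)}$ balance cleanly; without constant-rate global hypercontractivity one would recover only Bourgain's weaker $e^{-O(K^{2})}$ exponent.
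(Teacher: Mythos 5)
Your plan follows essentially the same route as the paper: pass to the contrapositive, convert the absence of a density bump under small restrictions into smallness of the generalised influences $\mathrm{I}_S(f)$ via the monotone equivalence lemma (Lemma \ref{rem}), apply global hypercontractivity and the Plancherel--H\"older trick to the low-degree truncations $g_i=(D_if)^{\le O(K)}$ of the derivatives, sum over $i$, and close with the Markov bound on the high-degree Fourier weight; this is precisely the structure of Theorem \ref{thm:Bourgain++} together with Lemmas \ref{lem:nt}, \ref{lem:applying_hypercontractivity} and \ref{lem:normtruncate}.

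One small but genuine slip to fix: you state the per-coordinate bound as $\|\mathrm{T}_\rho g_i\|_2^2\le e^{-c_3K}\,\mathrm{I}_i[f]^\alpha$ and then sum it using ``$\max_i\mathrm{I}_i[f]\le 1$''; but for $\alpha<1$ one has $t^\alpha\ge t$ on $[0,1]$, so that argument runs the \emph{wrong} way and the sum over $i$ is not controlled by $\mathrm{I}[f]$. Rearranging the H\"older + hypercontractivity chain carefully --- $\|g_i\|_2^2\le\|g_i\|_4\|D_if\|_{4/3}$ by Plancherel and H\"older, $\|g_i\|_4\le 5^{O(K)}\delta^{1/4}\|g_i\|_2^{1/2}$ from Lemma \ref{lem:applying_hypercontractivity}, and $\|D_if\|_{4/3}=\mathrm{I}_i[f]^{3/4}$ since $D_if$ is $\{-1,0,1\}$-valued --- gives the \emph{linear} bound $\|g_i\|_2^2\le 5^{O(K)}\delta^{1/3}\mathrm{I}_i[f]$, i.e.\ $\alpha=1$, after which the sum over $i$ is immediate without any ancillary bound on $\max_i\mathrm{I}_i[f]$. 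With that adjustment (and noting that one can work with the plain truncation $g_i^{\le r}$ rather than $\mathrm{T}_\rho g_i$, avoiding the extra $\rho^{O(K)}$ loss in the lower bound on $\mathrm{Var}(f^{\le O(K)})$), your argument matches the paper's proof.
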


For general functions we prove a similar result, where the conclusion
$\mu_{p}\left(f_{J\to1}\right)\ge\mu_{p}\left(f\right)+e^{-O\left(K\right)}$
is replaced with $\mathrm{I}_J\left(f\right)\ge e^{-O\left(K\right)}$.

\subsection*{The sparse regime}

On the other hand, the \emph{sparse regime}
(where we allow any value of $\mu_p(f)$)
seemed out of reach of previous methods in the literature.
Here Russo \cite{russo1982approximate}, and independently Kahn and Kalai \cite{kahn2007thresholds}, gave a proof of the
$p$-biased isoperimetric inequality:
$p\mathrm{I}\left[f\right]\ge
\mu_p\left(f\right)\log_p\left(\mu_p\left(f\right)\right)$ for every
$f$. They also showed that equality holds only for the monotone
sub-cubes. Kahn and Kalai posed the problem of determining
the structure of monotone Boolean functions $f$ that they called
\emph{$d$-optimal}, meaning that $p\mathrm{I}\left[f\right]\le
d\mu_p\left(f\right)\log_p\left(\mu_p\left(f\right)\right)$,
i.e.\ functions with total influence within a certain
multiplicative factor of the minimal value guaranteed
by the isoperimetric inequality.
They conjectured in \cite[Conjecture 4.1(a)]{kahn2007thresholds}
that for any constant $C>0$
there are constants $K,\dD>0$ such that
if $f$ is $C\log\left(1/p\right)$-optimal
then there is a set $J$ of
$\le K \log \frac{1}{\mu_{p}\left(f\right)}$ coordinates such that
$\mu_{p}\left(f_{J\to1}\right)\ge (1+\dD)\mu_p(f)$.

The corresponding result with a similar conclusion was open
even for $C$-optimal functions! Our second theorem is a variant
of the Kahn--Kalai conjecture which applies to
$C\log\left(1/p\right)$-optimal functions
when $C$ is sufficiently small
(whereas the conjecture requires
an arbitrary constant $C$).
We compensate for our stronger hypothesis in the following result
by obtaining
a much stronger conclusion than that asked for by Kahn and Kalai;
for example, if $f$ is $\frac{\log\left(1/p\right)}{100C}$-optimal
then $\mu_{p}\left(f_{J\to1}\right)\ge \mu_p(f)^{0.01}$.
We will also show that our result
is sharp up to the constant factor $C$.

\begin{thm}
\label{thm:Variant of Kahn kalai}
Let $p\in\left(0,\frac{1}{2}\right]$, $K \ge 1$
and let $f$ be a Boolean function with
$p\mathrm{I}\left[f\right] < K\mu_{p}\left(f\right)$.
Then there is a set $J$ of $\le CK$ coordinates,
where $C$ is an absolute constant, such that
$\mu_{p}\left(f_{J\to1}\right)\ge e^{-CK}$.
\end{thm}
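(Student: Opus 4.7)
\emph{Plan.} The plan is to argue by contradiction: assume $\mu_p(f_{J\to\mathbf 1})<e^{-CK}$ for every $J\sub[n]$ with $|J|\le CK$, and derive a contradiction via Theorem~\ref{thm:Hypercontractivity}. In particular, the case $J=\emptyset$ forces $\mu_p(f)<e^{-CK}$. A standard monotonization reduction (replacing $f$ by its monotone rearrangement, which preserves $\mu_p(f)$ and does not increase $\mathrm{I}[f]$, and lifting the conclusion back through a short further argument) lets us assume $f$ is monotone.

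For monotone Boolean $f$, the alternating sum $D_S f:=\sum_{y}(-1)^{|S|-|y|}f_{S\to y}$ vanishes outside $\{x:f_{S\to\mathbf 1}(x)=1\}$ by monotonicity, and a direct Cauchy--Schwarz bookkeeping gives $\mathrm{I}_S(f)=\|D_S f\|_2^2\le c^{|S|}\mu_p(f_{S\to\mathbf 1})$ for a modest absolute constant $c$. Under the contradiction hypothesis, taking $|S|\le d:=\lfloor c_1 K\rfloor$ for a sufficiently small constant $c_1$ yields $\mathrm{I}_S(f)\le e^{-c_0 K}$ for some $c_0>0$, provided $C$ is chosen large relative to $c_1$.

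Now pass to the degree-$d$ Fourier truncation $g=f^{\le d}$ in the $p$-biased orthonormal basis. One has $\mathrm{I}_S(g)\le\mathrm{I}_S(f)$ for $|S|\le d$ and $\mathrm{I}_S(g)=0$ otherwise. A $p$-biased Poincar\'e bound together with $p\mathrm{I}[f]<K\mu_p(f)$ yields $\|f-g\|_2^2\le p\mathrm{I}[f]/d\le \mu_p(f)/2$, so $\|g\|_2^2\ge\mu_p(f)/2$, and hence $g$ has $\beta$-small generalised influences with $\beta\le O(e^{-c_0K}/\mu_p(f))$. Applying Theorem~\ref{thm:Hypercontractivity} produces
\[
\|\mathrm{T}_{1/5}g\|_4^4\le \beta\|g\|_2^4 \lesssim e^{-c_0 K}\mu_p(f),
\]
while the elementary estimate $\|\mathrm{T}_{1/5}g\|_4^4\ge\|\mathrm{T}_{1/5}g\|_2^4\ge 5^{-4d}\|g\|_2^4\gtrsim 5^{-4c_1K}\mu_p(f)^2$ holds since $g$ is supported on Fourier levels $\le d$. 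Comparing gives $\mu_p(f)\le O(5^{4c_1K}e^{-c_0K})$, which for $C$ large enough contradicts the Russo--Kahn--Kalai lower bound $\mu_p(f)>p^K$ in the dense regime.

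The main obstacle is the sparse regime $p\ll 1$: there the $(4,2)$-hypercontractivity is too weak to yield an absolute constant $C$, because the $p^K$ lower bound on $\mu_p(f)$ becomes negligible. To obtain $C$ absolute and uniform in $p\in(0,\tfrac12]$, one invokes the full $(q,2)$-version of Theorem~\ref{thm:Hypercontractivity} from Section~\ref{sec:hyp} with $q\sim\log(1/p)$, calibrating $d$ so as to absorb the $p$-dependence. Sharpness of the linear scaling $|J|=O(K)$ is witnessed by the AND function $\prod_{i\in[K]}x_i$, which saturates $p\mathrm{I}[f]=K\mu_p(f)$.
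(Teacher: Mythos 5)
Your proposal takes a genuinely different tack (contradiction rather than the paper's contrapositive), but there is a concrete gap that you partially acknowledge and do not close, and one of your preparatory steps is both unjustified and unnecessary.

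\textbf{The monotonization step.} Replacing $f$ by its monotone rearrangement preserves $\mu_p$ and does not increase total influence, but you need to transfer the \emph{conclusion} (existence of a set $J$ with $\mu_p(f_{J\to 1})\ge e^{-CK}$) back from the monotonization $\tilde f$ to $f$, and there is no direct way to do this: shifting has no simple relation between $\mu_p(\tilde f_{J\to 1})$ and $\mu_p(f_{J\to 1})$. More to the point, the reduction is unnecessary: Lemma~\ref{lem: gen influence control for global functions} bounds all $\mathrm{I}_S(f)$ for an $(r,\delta)$-global Boolean function with $\mu_p(f)\le\delta$ \emph{without} assuming monotonicity; only sparsity (which you already have from the $J=\emptyset$ case) is used.

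\textbf{The crucial missing ingredient.} Your lower bound on $\|\mathrm{T}_{1/5}g\|_4$ comes from the generic chain $\|\mathrm{T}_{1/5}g\|_4\ge\|\mathrm{T}_{1/5}g\|_2\ge 5^{-d}\|g\|_2$, which only yields $\mu_p(f)\lesssim 5^{4d}e^{-c_0K}$. This is satisfied automatically (you have already assumed $\mu_p(f)<e^{-CK}$) and yields no contradiction when $p$ is small, as you note: the isoperimetric lower bound $\mu_p(f)>p^K$ tends to zero with $p$ and cannot beat it uniformly. The paper's proof (via Corollary~\ref{lem:normsense} and Lemma~\ref{lem:nt}) instead exploits Booleanness through the H\"older duality
\[
\|g\|_2^2=\langle g, f\rangle\le\|g\|_4\|f\|_{4/3}=\|g\|_4\,\mu_p(f)^{3/4},
\]
i.e.\ $\|g\|_4\ge\|g\|_2^2/\mu_p(f)^{3/4}$, which is dramatically stronger than $\|g\|_4\ge\|g\|_2$ in the sparse regime because $\|f\|_{4/3}\gg\|f\|_2$ for Boolean $f$ of small measure. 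Combined with Lemma~\ref{lem:applying_hypercontractivity} this gives $\|f^{\le 2K}\|_2^2\le 10^{2K}\delta^{1/3}\mu_p(f)\le\tfrac12\mu_p(f)$, after which \eqref{eq:inf} yields $p\mathrm{I}[f]\ge 2K\|f^{>2K}\|_2^2\ge K\mu_p(f)$ directly, proving the contrapositive with no appeal to any a priori lower bound on $\mu_p(f)$. Your proposed escape hatch --- passing to $(q,2)$-hypercontractivity with $q\sim\log(1/p)$ --- would still lose the $\mu_p(f)^{3/4}$ versus $\mu_p(f)^{1/2}$ gap unless paired with the H\"older duality, so as written it does not fix the problem.

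\textbf{Minor point on sharpness.} The AND function $\prod_{i\in[K]}x_i$ has $\mu_p(f_{\{1\}\to 1})=p^{K-1}$, which already exceeds $e^{-CK}$ whenever $p\ge e^{-C}$, so it does not witness sharpness of the linear dependence $|J|=\Theta(K)$. The paper's Example~\ref{eg2} (a tribes-times-dictator function) is needed for a genuine sharpness example.
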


\subsection{Sharp thresholds} \label{sec:kahn kalai}

The results of Friedgut and Bourgain mentioned above
also had the striking consequence that any `global'
Boolean function has a sharp threshold, which was
a breakthrough in the understanding of this phenomenon,
as it superceded many results for specific functions.

The sharp threshold phenomenon concerns the behaviour
of $\mu_p(f_n)$ for $p$ around the critical probability,
defined as follows.
Consider any sequence $f_n\colon\left\{0,1\right\}^n \to \{0,1\}$
of monotone Boolean functions. For $t \in [0,1]$
let $p_n(t) = \inf \{ p: \mu_p(f_n) \ge t\}$.
In particular, $p^c_n := p_n(1/2)$ is
commonly known as the `critical probability'
(which we think of as small in this paper).
A classical theorem of
Bollob\'{a}s and Thomason \cite{bollobas1987threshold}
shows that for any $\eps>0$ there is $C>0$
such that $p_n(1-\eps) \le C p_n(\eps)$.
This motivates the following definition:
we say that the sequence $(f_n)$
has a \emph{coarse threshold}
if for each $\eps>0$ the length of the interval
$[p_n(\eps),p_n(1-\eps)]$ is $\Theta(p^c_n)$,
otherwise we say that it has a \emph{sharp threshold}.

The classical approach for understanding sharp thresholds is based
on the Margulis--Russo formula
$\frac{d\mu_{p}\left(f\right)}{dp}=\mathrm{I}_{\mu_{p}}\left(f\right)$,
see \cite{margulis1977} and \cite{russo1982approximate}.
Here we note that if $f$ has a coarse threshold, then by the Mean
Value Theorem there is a constant $\epsilon>0$, some $p$ with $\mu_p(f)\in(\epsilon,1-\epsilon)$
and $p\mathrm{I}_{\mu_{p}}\left(f\right) = \Tt(1)$,
so one can apply various results mentioned in Section \ref{subsec:isoinf}.
Thus Bourgain's Theorem implies that
there is a set $J$ of $O\left(K\right)$ coordinates
such that $\mu_{p'}\left(f_{J\to1}\right)
\ge\mu_{p'}\left(f\right)+e^{-O\left(K^{2}\right)}$.
While this approach is useful for studying the behaviour of $f$
around the critical probability, it rarely gives any information
regarding the location of the critical probability.
Indeed, many significant papers are devoted to locating
the critical probability of specific interesting functions,
see e.g.\ the breakthroughs of Johansson, Kahn and Vu
\cite{johansson2008factors} and Montgomery
\cite{montgomery2018trees}.

A general result was conjectured
by Kahn and Kalai for the class of Boolean functions of the form
$f_n\colon \{0,1\}^{\binom{[n]}{2}}\to \left\{0,1\right\}$,
whose input is a graph $G$ and whose output is 1
if $G$ contains a certain fixed graph $H$.
For such functions there is a natural
`expectation heuristic' $p^E_n$ for the critical probability,
namely the least value of $p$ such that the expected number of copies
of any subgraph of $H$ in $G\left(n,p\right)$ is at least $1/2$.
Markov's inequality implies $p^c_n \ge p^E_n$,
and the hope of the Kahn--Kalai Conjecture is that there
is a corresponding upper bound up to some multiplicative factor.
They conjectured in \cite[Conjecture 2.1]{kahn2007thresholds}
that $p^c_n = O\left(p^E_n \log n\right)$, but this is widely open,
even if $\log n$ is replaced by $n^{o(1)}$.

The proposed strategy of Kahn and Kalai to this conjecture
via isoperimetric stability is as follows.
\begin{itemize}
\item{Prove a lower bound on $\mu_{p^E_n}\left(f_n\right)$.}
\item{Show (e.g.\ via Russo's lemma)
that if $\left|\left[p_E,p_c\right]\right|$ is too large,
then the $p$-biased total influence at some point
in the interval $\left[p_E,p_c\right]$ must be relatively small.}
\item{Prove an edge-isoperimetric stability result that
      rules out the latter possibility.}
 \end{itemize}

Theorem \ref{thm:Variant of Kahn kalai}
makes progress on the third ingredient.
Combining it with Russo's Lemma, we obtain
the following result that can be used
to bound the critical probability.
Let $f$ be a monotone Boolean function.
We say that $f$ is \emph{$M$-global} in an interval $I$
if for each set $J$ of size $\le M$ and each $p\in I$ we have
$\mu_p\left(f_{J\to1}\right)\le \mu_p\left(f\right)^{0.01}$.

\begin{thm}\label{thm:Sharp threshold result}
  There exists an absolute constant $C$ such that the following
  holds for any monotone Boolean function $f$
  with critical probability $p_c$ and $p\le p_c$.
  Suppose for some $M>0$ that $f$ is $M$-global
  in the interval  $\left[p,p_c\right]$
  and that $\mu_{p}\left(f\right)\ge  e^{-M/C}$.
  Then $p_c\le M^C p$.
\end{thm}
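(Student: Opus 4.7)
The plan is to combine the contrapositive of Theorem~\ref{thm:Variant of Kahn kalai} with the Margulis--Russo identity to derive a differential inequality for $\mu_q(f)$ on $[p,p_c]$, and then integrate. Let $C_0$ denote the absolute constant supplied by Theorem~\ref{thm:Variant of Kahn kalai}; the constant $C$ in the present theorem will be chosen as a sufficiently large absolute multiple of $C_0$ at the end. Write $u(q) := \ln(1/\mu_q(f))$ for $q \in [p,p_c]$, which is non-increasing by monotonicity of $f$, and observe that the hypothesis $\mu_p(f) \ge e^{-M/C}$ says $u(p) \le M/C$.

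At any $q \in [p, p_c]$ with $u(q) \ge 200 C_0$, set $K(q) := u(q)/(200 C_0) \ge 1$; then $C_0 K(q) = u(q)/200 \le M/(200C) \le M$, so the $M$-globalness of $f$ gives, for every $J \subseteq [n]$ of size at most $C_0 K(q)$,
\[
\mu_q(f_{J \to 1}) \;\le\; \mu_q(f)^{0.01} \;=\; e^{-u(q)/100} \;<\; e^{-u(q)/200} \;=\; e^{-C_0 K(q)}.
\]
Thus the conclusion of Theorem~\ref{thm:Variant of Kahn kalai} fails at parameter $q$ with this choice of $K$, forcing its hypothesis to fail as well, i.e.\ $qI[f] \ge K(q)\mu_q(f) = \tfrac{u(q)}{200 C_0}\,\mu_q(f)$.

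Dividing by $\mu_q(f)$ and invoking the Margulis--Russo identity $d\mu_q(f)/dq = I[f]$ turns this into $d(\log u)/d(\log q) \le -1/(200 C_0)$. Let $q^{*}$ be the smallest $q \ge p$ with $u(q) \le 200 C_0$ (set $q^*=p$ if $u(p) \le 200 C_0$ already). Integrating the differential inequality on $[p,q^*]$ gives
\[
\log(q^*/p) \;\le\; 200 C_0 \,\log\!\big(u(p)/u(q^*)\big) \;\le\; 200 C_0 \log(M/C),
\]
so $q^*/p \le M^{200 C_0}$. On the residual range $[q^*, p_c]$ we have $\mu_q(f) \ge e^{-200 C_0}$, an absolute positive constant, so the Bollob\'as--Thomason threshold theorem bounds $p_c/q^*$ by an absolute constant. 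Combining both estimates gives $p_c/p \le M^C$ for a suitable absolute constant $C$.

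The main obstacle I anticipate is calibrating $K(q)$ so that both technical requirements of Theorem~\ref{thm:Variant of Kahn kalai}---the size bound $|J| \le C_0 K$ (matched by $M$-globalness of $f$) and the strict inequality $\mu_q(f_{J \to 1}) < e^{-C_0 K}$ (beating the boost from restriction)---hold simultaneously throughout the integration. The calibration $K(q) = u(q)/(200 C_0)$ achieves this by trading the $0.01$ exponent in the definition of globalness against the $e^{-C_0 K}$ barrier of Theorem~\ref{thm:Variant of Kahn kalai}, while the ``dense'' tail regime, on which Theorem~\ref{thm:Variant of Kahn kalai} is vacuous, is absorbed cleanly by Bollob\'as--Thomason.
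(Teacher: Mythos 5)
Your proof is correct and follows the same core strategy as the paper: take the contrapositive of Theorem~\ref{thm:Variant of Kahn kalai} to get a pointwise lower bound $q\,\mathrm{I}_{\mu_q}[f]\ge K(q)\mu_q(f)$ with $K(q)$ proportional to $\log(1/\mu_q(f))$, convert this to the differential inequality $\tfrac{d}{d\log q}\log\log(1/\mu_q(f))\le -\Omega(1)$ via Margulis--Russo, and integrate. The place where you genuinely diverge from the paper's proof of Theorem~\ref{thm:trad} is the treatment of the \emph{dense tail}. The paper asserts that $\mathrm{I}_x[f]\ge \mu_x(f)\log(1/\mu_x(f))/(Cx)$ holds throughout the whole interval citing only Theorem~\ref{thm:Variant of Kahn kalai}; but the contrapositive of that theorem requires the parameter $K\ge 1$, which forces $\log(1/\mu_x(f))=\Omega(1)$, so in the regime where $\mu_x(f)$ is bounded below by an absolute constant one needs a separate argument (a Poincar\'e-type bound $\mathrm{I}[f]\ge \mathrm{Var}(f)/(x(1-x))$ suffices and is implicit in the paper's phrasing). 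You instead integrate only down to $q^*$ where $\log(1/\mu_{q^*}(f))=200C_0$, and absorb the residual interval $[q^*,p_c]$ cleanly with Bollob\'as--Thomason; this is a perfectly valid and arguably more transparent fix, though it loses the sharper intermediate estimate $\mu_q(f)\ge\mu_p(f)^{(p/q)^{1/C}}$ of Theorem~\ref{thm:trad}. One small point worth recording explicitly in your final sentence: since $p\le p_c$ gives $\mu_p(f)\le\tfrac12$, the hypothesis $\mu_p(f)\ge e^{-M/C}$ forces $M\ge C\log 2$, which is what allows the absolute Bollob\'as--Thomason constant $C'$ to be swallowed into $M^C$ when you combine the two ranges.
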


To see the utility of Theorem \ref{thm:Sharp threshold result},
imagine that one wants to bound the critical probability as
$p^c_n \le p$, but instead of showing $\mu_p(f_n)\ge \frac{1}{2}$
one can only obtain a weaker lower bound
$\mu_p\left(f\right)\ge e^{-M/C}$, where $f$ is $M$-global;
then one can still bound the critical probability
as $p^c_n \le M^{O(1)} p$.

\subsection{Noise sensitivity} \label{subsec:noise}

Studying the effect of `noise' on a Boolean function
is a fundamental paradigm in various contexts,
including hypercontractivity (as in Section \ref{subsec:hyper})
and Gaussian isoperimetry
(via the invariance principle, see Section \ref{sec:inv}).
Roughly speaking, a function $f$ is `noise sensitive'
if $f(x)$ and $f(y)$ are approximately independent
for a random input $x$ and random small perturbation $y$ of $x$;
an equivalent formulation (which we adopt below)
is that the `noise stability'
of $f$ is small (compared to $\mu_p\left(f\right)$).
Formally, we use the following definition.

\begin{defn}

The noise stability $\mathrm{Stab}_{\rho}(f)$
of $f \in L^2(\{0,1\}^n,\mu_p)$ is defined by
\[
\mathrm{Stab}_{\rho}\left(f\right)=\left\langle f,\mathrm{T}_{\rho}f\right\rangle =\mathbb{E}_{\xb\sim\mu_{p}}\left[f\left(\xb\right)\mathrm{T}_{\rho}f\left(\xb\right)\right].
\]
A sequence $f_n$ of Boolean functions is said to be noise sensitive if
for each fixed $\rho$ we have $\mathrm{Stab}_{\rho}\left(f_n\right)=\mu_p\left(f_n\right)^2+o\left(\mu_p\left(f_n\right)\right).$
\end{defn}

Note that everything depends on $p$, but this will be clear from the context,
so we suppress $p$ from the notation $\mathrm{Stab}_{\rho}$.
Kahn, Kalai, and Linial \cite{kahn1988influence}
(see also \cite[Section 9]{o2014analysis})
showed that sparse subsets of the uniform cube
are noise sensitive, where we recall that the sequence
$(f_n)$ is \emph{sparse} if $\mu_{p}\left(f_n\right)=o\left(1\right)$ and
\emph{dense} if $\mu_{p}\left(f_n\right)=\Theta\left(1\right)$.

The relationship between noise and influence
in the cube under the uniform measure was further studied
by Benjamini, Kalai, and Schramm \cite{benjamini1999percolation}  (with applications to percolation), who gave
a complete characterisation:
a sequence $(f_n)$ of monotone dense Boolean functions
is noise sensitive if and only if the
sum of the squares of the influences of $f_n$ is $o\left(1\right)$.
Schramm and Steif \cite{schramm2010quantitative} proved that
any dense Boolean function on $n$ variables that can be computed by an
algorithm that reads $o\left(n\right)$ of the input bits is noise
sensitive. Their result had the striking application that the set of
exceptional times in dynamical critical site percolation on the
triangular lattice, in which an infinite cluster exists, is of Hausdorff
dimension in the interval  $\left[\frac{1}{6},\frac{31}{36}\right]$.
Ever since, noise sensitivity was considered in many other contexts
(see e.g.\ the recent results and open problems of Lubetzky--Steif \cite{Lubetzky2015strong} and Benjamini \cite{benjamini2019noise}).

\subsection*{The $p$-biased setting}

In contrast to the uniform setting, in the $p$-biased setting
for small $p$ it is no longer true that sparse sets
are noise sensitive (e.g.\ consider dictators).
Our main contribution to the theory of noise sensitivity is showing
that `global' sparse sets are noise sensitive. Formally,
we say that a sequence $f_n$ of sparse Boolean functions
is \emph{weakly global} if for any $\eps, C > 0$ there is $n_0>0$
so that $\mu_p\left(\left(f_n\right)_{J\to 1}\right) < \eps$
for all $n>n_0$ and $J$ of size at most $C$.

\begin{thm}
  \label{thm:Noise sensitivity}
  Any weakly global sequence of Boolean functions is noise sensitive.
\end{thm}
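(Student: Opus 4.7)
The plan is to expand $\mathrm{Stab}_\rho(f_n) - \mu_p(f_n)^2 = \sum_{k \ge 1} \rho^k W_k(f_n)$ in the $p$-biased Fourier--Walsh basis, where $W_k(f) := \sum_{|S|=k}\hat f(S)^2$, and to reduce the question to showing that each fixed low Fourier level contributes negligibly compared to $\mu_p(f_n)$. Splitting at a large constant $d$, the high-degree tail satisfies $\sum_{k > d} \rho^k W_k(f_n) \le \rho^d\mathrm{Var}(f_n) \le \rho^d \mu_p(f_n)$, so once $\rho^d < \delta/2$ this contributes at most $(\delta/2)\mu_p(f_n)$. The core task is thus to prove $W_k(f_n) = o(\mu_p(f_n))$ for each fixed $k \ge 1$.

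For such $k$, I bound $W_k(f_n)$ by H\"older: since $f_n$ is Boolean, $\|f_n\|_{4/3} = \mu_p(f_n)^{3/4}$, and the level-$k$ projection $f_n^{=k}$ has mean zero, so
\[W_k(f_n) = \langle f_n, f_n^{=k}\rangle \le \mu_p(f_n)^{3/4}\|f_n^{=k}\|_4.\]
Applied to $g := f_n^{=k}$, Theorem~\ref{thm:Hypercontractivity} states $\|\mathrm{T}_{1/5}g\|_4 \le \bB^{1/4}\|g\|_2$ provided $\mathrm{I}_S(g) \le \bB\|g\|_2^2$ for all $S$. Because $g$ has pure degree $k$, one has $\mathrm{T}_{1/5}g = 5^{-k}g$, so this rearranges to $\|g\|_4 \le 5^k\bB^{1/4}W_k(f_n)^{1/2}$. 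Substituting into the H\"older bound gives $W_k(f_n) \le 5^{4k/3}\bB^{1/3}\mu_p(f_n)$, so it suffices to show that the smallest valid $\bB = \bB(n,k)$ tends to $0$ as $n \to \infty$.

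To control $\mathrm{I}_S(g)$, I use the Fourier identity $\mathrm{I}_S(h) = (p(1-p))^{-|S|}\sum_{T \supseteq S}\hat h(T)^2$, which yields $\mathrm{I}_S(f_n^{=k}) \le \mathrm{I}_S(f_n)$ for $|S| \le k$ and $\mathrm{I}_S(f_n^{=k}) = 0$ for $|S| > k$. For $|S| \le k$, Cauchy--Schwarz on the signed sum defining $\mathrm{I}_S(f_n)$, combined with Booleanness of $f_n$, gives
\[\mathrm{I}_S(f_n) \le 2^{|S|}\sum_{x \in \{0,1\}^S}\mu_p\big((f_n)_{S \to x}\big).\]
Conditioning on the coordinates of $S$ assigned $0$ in $x$ converts each restriction $S \to x$ into one of the form $J \to 1$ with $J := \mathrm{supp}(x)$, at a cost of $(1-p)^{-(|S|-|J|)}$ per missing coordinate. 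Thus $\mathrm{I}_S(f_n) \le C_{p,k}\cdot \max_{|J| \le k}\mu_p((f_n)_{J \to 1})$, and the weakly global hypothesis with constant $C = k$ forces this right-hand side to be arbitrarily small for $n$ large, so $\bB(n,k) \to 0$.

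The main obstacle is exactly this last conversion: weak globalness only controls restrictions-to-one, whereas generalised influences involve arbitrary restrictions $S \to x$. The resolution exploits that $\mu_p$ assigns mass $1-p = \Theta(1)$ to each coordinate being $0$, so converting $S \to x$ into $\mathrm{supp}(x) \to 1$ costs only a constant per complementary coordinate, which is absorbed into a constant depending on $d$ and $p$. Assembling the pieces: given $\delta > 0$, first pick $d$ with $\rho^d < \delta/2$, then apply the weakly global property with $C = d$ and a sufficiently small $\eps$ so that $\sum_{k=1}^d 5^{4k/3}\bB(n,k)^{1/3} < \delta/2$ for all large $n$. This gives $\mathrm{Stab}_\rho(f_n) - \mu_p(f_n)^2 < \delta\mu_p(f_n)$ eventually; since $\delta$ was arbitrary, the sequence $(f_n)$ is noise sensitive.
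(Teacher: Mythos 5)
Your argument is correct and takes essentially the same route the paper does via Theorem~\ref{thm:quantitative noise sensitivity}, Corollary~\ref{lem:normsense} and Lemma~\ref{lem: gen influence control for global functions}: split $\mathrm{Stab}_\rho(f_n)-\mu_p(f_n)^2$ into a geometrically small high-degree tail and a low-degree part, bound the latter by H\"older together with the global hypercontractive inequality, and control the generalised influences of the low-degree projection by transferring the weakly-global hypothesis (restrictions $J\to 1$) into an absolute bound on $\mathrm{I}_S$, exactly as in Lemma~\ref{lem: gen influence control for global functions}. One small notational slip: the displayed bound $W_k(f_n)\le 5^{4k/3}\bB^{1/3}\mu_p(f_n)$ is what you get when $\bB$ denotes the \emph{absolute} bound $\max_{S}\mathrm{I}_S(f_n^{=k})$ (which is what your restriction argument controls and what tends to $0$), not the ratio $\max_S \mathrm{I}_S(g)/\|g\|_2^2$ appearing in the hypothesis of Theorem~\ref{thm:Hypercontractivity}; taking the ratio instead gives $W_k\le 5^{2k}\bB^{1/2}\mu_p^{3/2}$, which also works but requires an extra word to see $\bB$ stays bounded, so the absolute-bound reading is the cleaner one and closes the argument as you intend.
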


\subsection{Further applications of global hypercontractivity}

Besides the applications of Theorem \ref{thm:Hypercontractivity}
to isoperimetry, sharp thresholds and noise sensitivity
discussed above, in Section \ref{sec:inv} we will also
generalise the Invariance Principle of
Mossel, O'Donnell and Oleszkiewicz \cite{mossel2010noise}
to the $p$-biased setting:
we show that if a low degree function on the $p$-biased cube
is global (has small {generalised} influences)
then it is close in distribution to a low
degree function on Gaussian space.
There are many other applications
that we defer to future papers:
\begin{itemize}
\item \emph{Exotic settings:}
    Noise sensitivity of sparse sets is related to small-set expansion
    on graphs, which has found many applications in Computer Science.
    Here the interpretation of Theorem \ref{thm:Noise sensitivity}
    is that although not all small sets in the $p$-biased cube expand,
    global small sets do expand.
    Results of a similar nature were proved for the Grassman graph
    (see \cite{subhash2018pseudorandom}) and the Johnson graph
    (see \cite{khot2018johnson}). The former result
    was essential in the proof of the $2$-to-$2$ Games
    Conjecture, a prominent problem in the field of hardness of
    approximation. Both these works involve long calculations, and have
    sub-optimal parameters. In subsequent works
    \cite{ellis2019hypercontractivity,filmus2019hypercontractivity_slice,
      filmus2019hypercontractivity_symmetric, keevash2019random}
     hypercontractive  results for global functions
     are proven for various domains by reducing to
    the $p$-biased cube and using
    Theorem \ref{thm:Hypercontractivity}. The results of
    \cite{ellis2019hypercontractivity,filmus2019hypercontractivity_slice}
    imply the
    corresponding results about small expanding sets in the
    Grassman/Johnson graph with optimal
    parameters. A similar result was also established for a certain
    noise operator on the symmetric group
    \cite{filmus2019hypercontractivity_symmetric}.

\item \emph{Extremal Combinatorics:}
 The junta method, introduced by Dinur and Friedgut \cite{DinurFriedgut}
 and further developed by Keller and Lifshitz \cite{keller2017junta},
 is a powerful tool for solving problems in Extremal Combinatorics
 via the sharp threshold phenomenon. Specifically, it is
 useful for the study of the Tur\'{a}n problem for hypergraphs,
 where one asks how large can a $k$-uniform hypergraph on $n$
 vertices be if it does not contain a copy of a given hypergraph $H$.
 This method was applied in \cite{keller2017junta} to resolve many
 such questions for a wide class of hypergraphs
 called \emph{expanded hypergraphs} in which the edge uniformity
 can be linear in $n$, although the number of edges in $H$ is fixed.
 In a companion paper \cite{keevash2019STandEH}, we apply
 the sharp threshold technology developed in the current paper
 to the regime  where the number of edges of $H$ can grow with $n$,
 thus settling many cases of
 the Huang--Loh--Sudakov conjecture \cite{huang2012size}
 on cross matchings in uniform hypergraphs and the
 F\"uredi--Jiang--Seiver conjecture on path expansions.
 \end{itemize}

The organisation of this paper is as follows.
After introducing some background on Fourier analysis on the cube
in the next section, we prove Theorem \ref{thm:Hypercontractivity}
in Section \ref{sec:hyp}. In Section \ref{sec:equiv} we establish
the equivalence between the two notions of globalness referred to above,
namely control of generalised influences
and insensitivitity of the measure
under restriction to a small set of coordinates.
Section \ref{sec:noise+sharp} concerns the total influence
of global functions, and includes the proofs of
our stability results for the isoperimetric inequality
(Theorems \ref{thm:Bourgain+} and \ref{thm:Variant of Kahn kalai})
and our first sharp threshold result
(Theorem \ref{thm:Sharp threshold result}).
In Section \ref{sec:noise} we prove our result
on noise sensitivity and apply this to deduce
an alternative sharp threshold result.
Section \ref{sec:genhyp} generalises our hypercontractivity result
in two directions: we consider general norms and general product spaces.
In Section \ref{sec:inv} we prove our $p$-biased version of the
Invariance Principle and sketch its application to a variant
of the `Majority is Stablest' theorem and a sharp threshold
result for almost monotone functions.
We end with some concluding remarks.

\section{Notations} \label{sec:not}

Here we summarise some notation
and basic properties of Fourier analysis on the cube.
We fix $p \in (0,1)$ and suppress it in much of our notation,
i.e.\ we consider $\{0,1\}^n$ to be equipped with the $p$-biased
measure $\mu_p$, unless otherwise stated. We let $\sigma = \sqrt {p(1-p)}$
(the standard deviation of a $p$-biased bit). For each $i\in [n]$
we define $\chi_{i}\colon\left\{ 0,1\right\} ^{n}\to {\mathbb R}$
by $\chi_{i}\left(x\right)=\frac {x_i-p}{\sigma }$
(so $\chi_i$ has mean $0$ and variance $1$).
We use the orthonormal Fourier basis $\left\{ \chi_{S}\right\}_{S \subset [n]}$
of $L^{2}\left(\left\{ 0,1\right\} ^{n},\mu_{p}\right)$,
where each $\chi_{S}:=\prod_{i\in S}\chi_{i}$.
Any $f: \{0,1\}^n \to {\mathbb R}$ has a unique expression
$f=\sum _{S\sub [n]}\hat{f}(S)\chi _S$
where $\{\hat f(S)\}_{S\subset [n]}$ are the
\emph{$p$-biased Fourier coefficients of }$f$.
Orthonormality gives the Plancherel identity
$\bgen{f,g} = \sum_{S \sub [n]} \hat{f}(S) \hat{g}(S)$.
In particular, we have the Parseval identity
$\mb{E}[f^2] = \|f\|_2^2 = \bgen{f,f}
= \sum_{S \sub [n]} \hat{f}(S)^2$.
For $\mc{F} \sub \{0,1\}^n$ we  define the $\mc{F}$-truncation
$f^{\mc{F}} = \sum_{S \in \mc{F}} \hat{f}(S)\chi _S$.
Our truncations will always be according to some degree threshold $r$,
for which we write $f^{\le r} = \sum_{|S| \le r} \hat{f}(S)\chi _S$.

For $i \in [n]$, the $i$-\emph{derivative} $f_i$
and $i$-\emph{influence} $\mathrm{I}_i(f)$ of $f$ are
\begin{align*}
f_i & =\mathrm{D}_{i}\left[f\right] =\sigma \big (f_{i\to1}-f_{i\to0}\big )
= \sum_{S: i\in S}\hat{f}\left(S\right)\chi_{S\backslash\left\{ i\right\} },
\text{ and} \\
\mathrm{I}_i(f) & = \| f_{i \to 1} - f_{i \to 0} \|_2^2
= \sS^{-2} \mb{E}[f_i^2]
= \tfrac{1}{p(1-p)} \sum_{S: i\in S} \hat{f}(S)^2.
\end{align*}
The \emph{influence} of $f$ is
\begin{align} \label{eq:inf}
\mathrm{I}(f) = \sum_i \mathrm{I}_i(f)
= (p(1-p))^{-1} \sum_S |S| \hat{f}(S)^2.
\end{align}
In general, for $S \sub [n]$, the $S$-\emph{derivative} of $f$ is
obtained from $f$ by sequentially applying $\mathrm{D}_i$ for each $i \in S$, i.e.\
\[
  \mathrm{D}_S(f)
= \sigma ^{|S|} \sum_{x\in\left\{ 0,1\right\} ^{S}}\
 (-1)^{\left|S\right|-\left|x\right|}{f}_{S\to x}
 = \sum _{T: S \subset T} \hat f (T) \chi _{T\setminus S}.
\]
The \emph{$S$-influence} of $f$
(as in  Definition \ref{def:generalised influences}) is
\begin{align}
\label{equation: gen influence Fourier expression}
\mathrm{I}_{S}(f)= \sigma ^{-2|S|}\| \mathrm{D}_S\left(f\right) \|_2^2
= \sigma ^{-2|S|}\sum _{E: S \subset E} \hat f (E)^2.
\end{align}
Recalling that a function $f$ has $\alpha$-small generalised influences
if $\mathrm{I}_{S}(f)\le\alpha \mb{E}[f^2]$ for all $S\subset [n]$,
we see that this is equivalent to
$\mb{E}[\mathrm{D}_S\left(f\right)^2] \le \alpha \sigma^{2|S|} \mb{E}[f^2]$
for all $S \subset [n]$.

  \section{Hypercontractivity of functions with small generalised influences}
\label{sec:hyp}

In this section we prove our hypercontractive inequality
(Theorem \ref{thm:Hypercontractivity}), which is the fundamental result
that underpins all of the results in this paper.

The idea of the proof is to reduce hypercontractivity in $\mu_p$
to hypercontractivity in $\mu_{1/2}$ via the `replacement method'
(the idea of Lindeberg's proof of the Central Limit Theorem, and of
the proof of Mossel, O'Donnell and Oleszkiewicz
\cite{mossel2010noise} of the invariance principle).
Throughout this section we fix $f:\{0,1\}^n \to \mb{R}$
and express $f$ in
the $p$-biased Fourier basis as $\sum_S \hat{f}(S) \chi^p_S$,
where $\chi^p_S = \prod_{i \in S} \chi^p_i$
and $\chi^p_i(x) = \tfrac{x_i-p}{\sS}$
(the same notation as above, except that we introduce the
superscript $p$ to distinguish the $p$-biased and uniform settings).

For $0 \le t \le n$ we define
$f_t = \sum_S \hat{f}(S) \chi_S^t$,  where
\[\chi^t_S = \prod\limits_{i\in S\cap [t]}{\chi^{1/2}_i(x)}
 \prod\limits_{i\in S\setminus [t]}{\chi^{p}_i(x)}
\in L^2(\{0,1\}^{[t]},\mu_{1/2}) \times
 L^2(\{0,1\}^{[n] \sm [t]},\mu_p). \]
Thus $f_t$ interpolates from $f_0=f \in L^2(\{0,1\}^n,\mu_p)$
to $f_n = \sum_S \hat{f}(S) \chi^{1/2}_S
\in L^2(\{0,1\}^n,\mu_{1/2})$.
As $\{ \chi^t_S: S \sub [n]\}$ is an orthonormal basis
we have $\|f_t\|_2 = \|f\|_2$ for all $t$.

We also define noise operators $\mathrm{T}^t_{\rho',\rho}$
on $L^2(\{0,1\}^{[t]},\mu_{1/2}) \times
 L^2(\{0,1\}^{[n] \sm [t]},\mu_p)$
by $\mathrm{T}^t_{\rho',\rho}(g)(\xb) =
\mb{E}_{\yb \sim N_{\rho',\rho}(\xb)}[f(\yb)]$,
where to sample $\yb$ from $N_{\rho',\rho}(\xb)$,
for $i \le t$ we let $y_i=x_i$ with probability $\rho'$
or otherwise we resample $y_i$ from $\mu_{1/2}$,
and for $i>t$ we let $y_i=x_i$ with probability $\rho$
or otherwise we resample $y_i$ from $\mu_p$.
Thus $\mathrm{T}^t_{\rho',\rho}$ interpolates from
$\mathrm{T}^0_{\rho',\rho}=\mathrm{T}_{\rho}$ (for $\mu_p$) to
$\mathrm{T}^n_{\rho',\rho}=\mathrm{T}_{\rho'}$ (for $\mu_{1/2}$).

We record the following estimate for $4$-norms
of $p$-biased characters:
\[ \lambda := \mb{E}[ (\chi^p_i)^4 ]
= \sS^{-4}( p (1-p)^4 + (1-p) p^4 )
= \sS^{-2}( (1-p)^3 + p^3 ) \le \sS^{-2}. \]

The core of our argument by replacement is the
following lemma which controls the evolution of
$\mb{E}[(\mathrm{T}^t_{2\rho,\rho} f_t)^4]
= \norm{\mathrm{T}^t_{2\rho,\rho} f_t}_4^4$
for $0 \le t \le n$.

\begin{lem} \label{lem:Replacement step}
$\mb{E}[(\mathrm{T}^{t-1}_{2\rho,\rho} f_{t-1})^4]
\le \mb{E}[(\mathrm{T}^t_{2\rho,\rho} f_t)^4]
+ 3\lambda\rho^4 \mb{E}[(\mathrm{T}^t_{2\rho,\rho} ((\mathrm{D}_t f)_t) )^4]$.
\end{lem}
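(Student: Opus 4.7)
My plan is to establish this by a Lindeberg-style replacement step: I will separate coordinate $t$ from the others, observe that $f_{t-1}$ and $f_t$ differ only in whether the factor at coordinate $t$ is $\chi_t^p$ or $\chi_t^{1/2}$, expand the two fourth powers by the binomial theorem, integrate over $x_t$, and show that the resulting discrepancy is controlled by the quartic correction $3\lambda\rho^4\mathbb{E}[\tilde B^4]$, where $\tilde B$ comes from the derivative term.

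First, grouping Fourier basis elements according to whether $t\in S$, I would write
\[
f_{t-1} = A + \chi_t^p B, \qquad f_t = A + \chi_t^{1/2} B,
\]
with $A = \sum_{S\not\ni t}\hat{f}(S)\chi^{t-1}_S$ and $B = \sum_{S\ni t}\hat{f}(S)\chi^{t-1}_{S\setminus\{t\}}$ depending only on the coordinates $\ne t$; a short Fourier calculation identifies $B$ with $(\mathrm{D}_t f)_t$. The noise operator factors across coordinates, and $\chi_t^p$ (resp.\ $\chi_t^{1/2}$) is an eigenvector of the $\mu_p$-noise (resp.\ $\mu_{1/2}$-noise) on coordinate $t$ with eigenvalue $\rho$ (resp.\ $2\rho$). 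So writing $\mathrm{T}'$ for the joint noise operator on the coordinates $\ne t$ (parameter $2\rho$ on $[t-1]$, parameter $\rho$ on $\{t+1,\dots,n\}$) and setting $\tilde{A} = \mathrm{T}' A$, $\tilde{B} = \mathrm{T}' B$, I obtain
\[
\mathrm{T}^{t-1}_{2\rho,\rho} f_{t-1} = \tilde{A} + \rho\chi_t^p\tilde{B}, \qquad \mathrm{T}^{t}_{2\rho,\rho} f_t = \tilde{A} + 2\rho\chi_t^{1/2}\tilde{B}, \qquad \mathrm{T}^{t}_{2\rho,\rho}\bigl((\mathrm{D}_t f)_t\bigr) = \tilde{B}.
\]

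Next, I would expand both fourth powers via the binomial theorem and integrate over $x_t$. Using $\mathbb{E}_{\mu_p}[\chi_t^p]=0$, $\mathbb{E}_{\mu_p}[(\chi_t^p)^2]=1$, $\mathbb{E}_{\mu_p}[(\chi_t^p)^4]=\lambda$ together with the fact that $\chi_t^{1/2}\in\{\pm 1\}$ (so every odd moment vanishes and every even moment equals $1$), the claimed inequality reduces, pointwise in the coordinates $\ne t$, to
\[
18\rho^2\tilde{A}^2\tilde{B}^2 \;-\; 4\rho^3\,\mathbb{E}_{\mu_p}[(\chi_t^p)^3]\,\tilde{A}\tilde{B}^3 \;+\; (16 + 2\lambda)\rho^4\tilde{B}^4 \;\ge\; 0.
\]

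Verifying this pointwise inequality is the step I expect to be the main obstacle, because the odd moment $\mathbb{E}_{\mu_p}[(\chi_t^p)^3]$ does \emph{not} vanish (unlike in the $1/2$-biased case), producing a genuine cubic cross-term in $\tilde{A},\tilde{B}$. The plan is to view the left side as a quadratic in $\tilde{A}$ with positive leading coefficient $18\rho^2\tilde{B}^2$ and check that its discriminant is non-positive, which reduces to the scalar bound $\mathbb{E}_{\mu_p}[(\chi_t^p)^3]^2 \le 72 + 9\lambda$. A direct computation gives $\mathbb{E}_{\mu_p}[(\chi_t^p)^3]^2 = (1-2p)^2/\sigma^2$, and the identity $(1-p)^3 + p^3 = 1 - 3\sigma^2$ yields $\lambda = \sigma^{-2} - 3$, so $\mathbb{E}_{\mu_p}[(\chi_t^p)^3]^2 \le \sigma^{-2} = \lambda + 3 \le 72 + 9\lambda$. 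The correction $3\lambda\rho^4\mathbb{E}[\tilde{B}^4]$ on the right-hand side is tuned precisely so that the $\lambda\tilde{B}^4$ coefficient on the left is large enough to absorb this cross-term in the regime (small $p$, large $\lambda$) where it is most dangerous.
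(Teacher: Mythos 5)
Your proof is correct and follows essentially the same coordinate-by-coordinate replacement argument as the paper: separate coordinate $t$, reduce (after averaging over $x_t$) to a pointwise polynomial inequality in the two scalars $\tilde{A},\tilde{B}$, and expand the fourth powers using the moments of $\chi^p_t$ and $\chi^{1/2}_t$. The only cosmetic difference is in how the cubic cross-term $4\rho^3\mathbb{E}[(\chi^p_t)^3]\tilde{A}\tilde{B}^3$ is absorbed: the paper bounds it via Cauchy--Schwarz (writing $(\chi^p_t)^3 = (\chi^p_t)^2\cdot\chi^p_t$) followed by AM--GM, while you view the reduced inequality as a quadratic in $\tilde{A}$ and check the discriminant condition $\mathbb{E}[(\chi^p_t)^3]^2 \le 72+9\lambda$, which you verify via $\mathbb{E}[(\chi^p_t)^3]^2=(1-2p)^2/\sigma^2 \le \sigma^{-2}=\lambda+3$; both maneuvers close the gap with room to spare.
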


\begin{proof}
We write
\begin{align*}
f_t & = \chi^{1/2}_t g + h \ \ \text{ and } \ \
 f_{t-1} =  \chi^p_t g + h, \ \ \text{ where } \\
g & = (\mathrm{D}_t f)_t
= \sum_{S: t \in S} \hat{f}(S) \chi^t_{S \sm \{t\}}
= \sum_{S: t \in S} \hat{f}(S) \chi^{t-1}_{S \sm \{t\}}
= (\mathrm{D}_t f)_{t-1}, \ \ \text{ and } \\
h & = \mb{E}_{x_t \sim \mu_{1/2}} f_t
= \sum_{S: t \notin S} \hat{f}(S) \chi^t_S
= \sum_{S: t \notin S} \hat{f}(S) \chi^{t-1}_S
= \mb{E}_{x_t \sim \mu_p} f_{t-1}.
\end{align*}
We also write
\begin{align*}
\mathrm{T}^t_{2\rho,\rho} f_t & = 2\rho \chi^{1/2}_t d + e \ \ \text{ and } \ \
\mathrm{T}^{t-1}_{2\rho,\rho} f_{t-1} = \rho \chi^p_t d + e, \ \ \text{ where } \\
d & = \mathrm{T}^t_{2\rho,\rho} g = \mathrm{T}^{t-1}_{2\rho,\rho} g
\ \ \text{ and } \ \
e = \mathrm{T}^t_{2\rho,\rho} h = \mathrm{T}^{t-1}_{2\rho,\rho} h.
\end{align*}
We can calculate the expectations in the statement of the lemma
by conditioning on all coordinates other than $x_t$, i.e.\
$\mb{E}_{\xb}[ \cdot ] = \mb{E}_{\xb'}[ \mb{E}_{x_t}[ \cdot \mid \xb'] ]$
where $\xb'$ is obtained from $\xb=(x_1,\dots,x_n)$ by removing $x_t$.
It therefore suffices to establish the required inequality
for each fixed $\xb'$ with expectations over the choice of $x_t$;
thus we can treat $d$ and $e$ as constants, and it suffices to show
\begin{equation} \label{eq:ets}
\mb{E}_{x_t}[ (\rho d\chi^p_t + e)^4 ]
\le \mb{E}_{x_t}[ (2\rho d\chi^{1/2}_t + e)^4 ]
+ 3\lambda\rho^4 d^4.
\end{equation}
As $\chi^p_t$ has mean $0$, we can expand the left hand side
of \eqref{eq:ets} as
\[ (\rho d)^4 \mb{E}[(\chi^p_t)^4]
 + 4 e (\rho d)^3  \mb{E}[(\chi^p_t)^3]
 + 6 e^2 (\rho d)^2  \mb{E}[(\chi^p_t)^2] + e^4
 \le 3 \lL (d\rho)^4 + 8 (de\rho)^2 + e^4, \]
where we bound the second term
using Cauchy-Schwarz then AM-GM by
\[ 4 \cdot \mb{E}[(d\rho\chi^p_t)^4]^{1/2}
\cdot \mb{E}[(de\rho\chi^p_t)^2]^{1/2}
\le 2 \left( \mb{E}[(d\rho\chi^p_t)^4]
+  \mb{E}[(de\rho\chi^p_t)^2] \right)
= 2( \lL (d\rho)^4 + (de\rho)^2 ). \]
Similarly, as $\mb{E}[\chi^{1/2}_t]
= \mb{E}[(\chi^{1/2}_t)^3] = 0$,
we can expand the first term
on the right hand side of \eqref{eq:ets} as
\[ (2\rho d)^4 \mb{E}[(\chi^{1/2}_t)^4]
 + 6 e^2 (2\rho d)^2  \mb{E}[(\chi^{1/2}_t)^2] + e^4
= (2\rho d)^4 + 6(2\rho de)^2 + e^3
\ge 8 (de\rho)^2 + e^4. \]
The lemma follows.
\end{proof}

Now we apply the previous lemma inductively to prove the following estimate.

\begin{lem} \label{hypinduct}
$\| \mathrm{T}^i_{2\rho,\rho} f_i \|_4^4  \le \sum_{S \sub [n] \sm [i]}
 (3\lL\rho^4)^{|S|} \| \mathrm{T}^n_{2\rho,\rho} ((\mathrm{D}_S f)_n) \|_4^4$
 for all $0 \le i \le n$.
\end{lem}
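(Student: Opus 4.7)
The plan is to prove Lemma \ref{hypinduct} by downward induction on $i$, with base case $i=n$ and inductive step going from level $i+1$ down to level $i$, the latter driven by one application of Lemma \ref{lem:Replacement step} followed by two uses of the induction hypothesis (once with $f$ itself and once with $\mathrm{D}_{i+1} f$).

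\textbf{Base case.} At $i=n$, the set $[n]\sm[n]$ is empty, so the only term in the sum corresponds to $S=\emptyset$, where $\mathrm{D}_\emptyset f = f$ and $(3\lL\rho^4)^0 = 1$. Hence the right hand side reduces to $\|\mathrm{T}^n_{2\rho,\rho} f_n\|_4^4$, matching the left hand side trivially.

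\textbf{Inductive step.} Suppose the bound holds at level $i+1$ with every function in place of $f$. Applying Lemma \ref{lem:Replacement step} with $t=i+1$ gives
\[ \| \mathrm{T}^i_{2\rho,\rho} f_i \|_4^4 \le \| \mathrm{T}^{i+1}_{2\rho,\rho} f_{i+1} \|_4^4 + 3\lL\rho^4 \| \mathrm{T}^{i+1}_{2\rho,\rho} ((\mathrm{D}_{i+1} f)_{i+1}) \|_4^4. \]
The induction hypothesis applied to $f$ bounds the first term by $\sum_{S \sub [n]\sm[i+1]} (3\lL\rho^4)^{|S|} \| \mathrm{T}^n_{2\rho,\rho}((\mathrm{D}_S f)_n) \|_4^4$, accounting for those $S \sub [n]\sm[i]$ with $i+1 \notin S$. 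For the second term, apply the induction hypothesis with $f$ replaced by the auxiliary function $g := \mathrm{D}_{i+1} f$: this bounds $\| \mathrm{T}^{i+1}_{2\rho,\rho} g_{i+1} \|_4^4$ by $\sum_{T \sub [n]\sm[i+1]} (3\lL\rho^4)^{|T|} \| \mathrm{T}^n_{2\rho,\rho}((\mathrm{D}_T g)_n) \|_4^4$. Using commutativity $\mathrm{D}_T \mathrm{D}_{i+1} = \mathrm{D}_{T\cup\{i+1\}}$, then multiplying through by $3\lL\rho^4$ and reindexing via $S = T\cup\{i+1\}$ (so $|S|=|T|+1$), this produces exactly the contribution to the target sum from those $S \sub [n]\sm[i]$ with $i+1 \in S$. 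Summing the two pieces exhausts all $S \sub [n]\sm[i]$ and closes the induction.

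\textbf{Main obstacle.} The only nontrivial point is verifying that the induction hypothesis applies cleanly to $g = \mathrm{D}_{i+1} f$: one must check that the interpolation $g \mapsto g_{i+1}$ built from $g$'s Fourier expansion agrees with $(\mathrm{D}_{i+1} f)_{i+1}$, and that derivatives commute with interpolation so that $(\mathrm{D}_T g)_n = (\mathrm{D}_{T\cup\{i+1\}} f)_n$ for $T \sub [n]\sm[i+1]$. Both identifications are immediate from the Fourier formulas: $\mathrm{D}_j$ acts on $\hat f(S)\chi_S^t$ by zeroing the term when $j \notin S$ and by stripping $\chi_j^t$ from the product when $j \in S$, and this operation commutes both with further derivatives and with the basis replacement $\chi^p_\bullet \leftrightarrow \chi^{1/2}_\bullet$ that defines the interpolation $f \mapsto f_t$.
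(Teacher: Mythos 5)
Your proof is correct and follows essentially the same route as the paper: induction on $n-i$, the base case $i=n$ holding with equality, and the inductive step applying Lemma \ref{lem:Replacement step} with $t=i+1$ followed by the induction hypothesis for both $f$ and $\mathrm{D}_{i+1}f$. The commutativity check you flag as the ``main obstacle'' is exactly what the paper establishes implicitly inside the proof of Lemma \ref{lem:Replacement step}, where it is shown that $(\mathrm{D}_t f)_t = (\mathrm{D}_t f)_{t-1}$ by comparing Fourier formulas.
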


\begin{proof}
We prove the inequality by induction on $n-i$
simultaneously for all functions $f$.
If $n=i$ then equality holds trivially.
Now suppose that $i<n$.
By Lemma \ref{lem:Replacement step} with $t=i+1$,
and the induction hypothesis applied to $f$ and $\mathrm{D}_t f$
with $i$ replaced by $t$, we have
\begin{align*}
\| \mathrm{T}^i_{2\rho,\rho} f_i \|_4^4
& \le  \| \mathrm{T}^t_{2\rho,\rho} f_t \|_4^4
+ 3\lL\rho^4 \| \mathrm{T}^t_{2\rho,\rho} ((\mathrm{D}_t f)_t) \|_4^4 \\
& \le \sum_{S \sub [n] \sm [t]}
 (3\lL\rho^4)^{|S|} \| \mathrm{T}^n_{2\rho,\rho} ((\mathrm{D}_S f)_n) \|_4^4
+ 3\lL\rho^4 \sum_{S \sub [n] \sm [t]}
 (3\lL\rho^4)^{|S|} \| \mathrm{T}^n_{2\rho,\rho} ((\mathrm{D}_S \mathrm{D}_t f)_n) \|_4^4 \\
& = \sum_{S \sub [n] \sm [i]}
 (3\lL\rho^4)^{|S|} \| \mathrm{T}^n_{2\rho,\rho} ((\mathrm{D}_S f)_n) \|_4^4. & \qedhere
\end{align*}
\end{proof}

In particular, recalling that
$\mathrm{T}^0_{2\rho,\rho}=\mathrm{T}_{\rho}$ on $\mu_p$ and
$\mathrm{T}^n_{2\rho,\rho}=\mathrm{T}_{2\rho}$ on $\mu_{1/2}$,
the case $i=0$ of Lemma \ref{hypinduct} is as follows.

\begin{prop}
\label{prop:Reason for hypercontractivity}
$\| \mathrm{T}_{\rho} f \|_4^4  \le \sum_{S \sub [n]}
 (3\lL\rho^4)^{|S|} \| \mathrm{T}_{2\rho} ((\mathrm{D}_S f)_n) \|_4^4$.
\end{prop}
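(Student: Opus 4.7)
The plan is immediate: this proposition is precisely the $i = 0$ case of Lemma~\ref{hypinduct}, so the entire argument reduces to identifying the boundary values of the interpolation $f_t$ and of the operator $\mathrm{T}^t_{2\rho,\rho}$ at $t = 0$ and $t = n$. No further analytic input is needed, since the one-step replacement estimate and its telescoping induction have already been carried out in Lemma~\ref{lem:Replacement step} and Lemma~\ref{hypinduct}.

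For the left-hand side, at $t = 0$ the empty prefix $[0] = \emptyset$ forces every character in the basis $\{\chi^0_S\}$ to be a $p$-biased character, so $\chi^0_S = \chi^p_S$ and hence $f_0 = f$ in $L^2(\{0,1\}^n,\mu_p)$; by the same token $\mathrm{T}^0_{2\rho,\rho}$ applies $\rho$-noise with respect to $\mu_p$ on every coordinate and so agrees with $\mathrm{T}_\rho$ on the $p$-biased cube. For the right-hand side of Lemma~\ref{hypinduct}, the index set $[n] \sm [0] = [n]$, so the sum ranges over all $S \sub [n]$; and at $t = n$ the interpolation uses the symmetric basis $\chi^{1/2}_i$ on every coordinate, so $\mathrm{T}^n_{2\rho,\rho} = \mathrm{T}_{2\rho}$ on $(\{0,1\}^n,\mu_{1/2})$. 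Substituting these identifications into the inequality supplied by Lemma~\ref{hypinduct} at $i=0$ reproduces the statement of the proposition verbatim. There is essentially no obstacle at this step; the only thing to check is that the two boundary reductions just described are correct, and each is a direct unpacking of the definitions of $f_t$ and of $\mathrm{T}^t_{2\rho,\rho}$ given at the start of the section.
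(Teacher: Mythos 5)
Correct, and identical to the paper's own derivation: Proposition~\ref{prop:Reason for hypercontractivity} is stated in the paper as the $i=0$ specialisation of Lemma~\ref{hypinduct}, using exactly the boundary identifications $f_0 = f$, $\mathrm{T}^0_{2\rho,\rho}=\mathrm{T}_{\rho}$ on $\mu_p$, and $\mathrm{T}^n_{2\rho,\rho}=\mathrm{T}_{2\rho}$ on $\mu_{1/2}$ that you describe.
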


The $4$-norms on the right hand side of
Proposition \ref{prop:Reason for hypercontractivity}
are with respect to the uniform measure $\mu_{1/2}$,
where we can apply standard hypercontractivity
(the `Beckner-Bonami Lemma') for $\rho \le 1/2\sqrt{3}$
to obtain $\| \mathrm{T}_{2\rho} ((\mathrm{D}_S f)_n) \|_4^4
\le \| (\mathrm{D}_S f)_n \|_2^4 = \| \mathrm{D}_S f \|_2^4 = \sS^{4|S|} \mathrm{I}_S(f)^2$.
Recalling that $\lL \le \sS^{-2}$,
we deduce the following bound for $\| \mathrm{T}_{\rho} f \|_4^4$
in terms of the generalised influences of $f$.

\begin{thm} \label{thm:hypref}
If $\rho\le 1/\sqrt{12}$ then
$\norm{\mathrm{T}_{\rho} f}_4^4  \le \sum_{S \sub [n]}
(3\lL\rho^4)^{|S|} \| \mathrm{D}_S f \|_2^4
\le \sum_{S \sub [n]} (3\sS^2\rho^4)^{|S|} \mathrm{I}_S(f)^2$.
\end{thm}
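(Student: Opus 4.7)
The plan is to derive Theorem \ref{thm:hypref} as a short consequence of Proposition \ref{prop:Reason for hypercontractivity} together with the classical Beckner-Bonami $(4,2)$-hypercontractive inequality on the uniform cube, and then translate the resulting $L^2$-norms into generalised influences via the Fourier identity \eqref{equation: gen influence Fourier expression}.

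First I would start from the bound
\[
\norm{\mathrm{T}_\rho f}_4^4 \le \sum_{S \sub [n]} (3\lL \rho^4)^{|S|} \norm{\mathrm{T}_{2\rho}((\mathrm{D}_S f)_n)}_4^4
\]
given by Proposition \ref{prop:Reason for hypercontractivity}. The key point is that the $4$-norms on the right-hand side are taken with respect to the \emph{uniform} measure $\mu_{1/2}$, since $(\mathrm{D}_S f)_n \in L^2(\{0,1\}^n, \mu_{1/2})$. The hypothesis $\rho \le 1/\sqrt{12}$ rewrites as $2\rho \le 1/\sqrt{3}$, which is precisely the threshold at which the classical Bonami-Beckner inequality yields $\norm{\mathrm{T}_{2\rho} g}_4 \le \norm{g}_2$ for any $g \in L^2(\{0,1\}^n,\mu_{1/2})$. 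Applying this with $g = (\mathrm{D}_S f)_n$ gives $\norm{\mathrm{T}_{2\rho}((\mathrm{D}_S f)_n)}_4^4 \le \norm{(\mathrm{D}_S f)_n}_2^4$.

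Next I would use the fact (already noted in the paper via orthonormality of the $\chi^t_S$ basis) that the replacement procedure preserves $L^2$ norms, so $\norm{(\mathrm{D}_S f)_n}_2 = \norm{\mathrm{D}_S f}_2$, the latter computed in $\mu_p$. This immediately yields the first inequality in the theorem:
\[
\norm{\mathrm{T}_\rho f}_4^4 \le \sum_{S \sub [n]} (3\lL \rho^4)^{|S|} \norm{\mathrm{D}_S f}_2^4.
\]
For the second inequality, I would invoke the Fourier identity \eqref{equation: gen influence Fourier expression}, which gives $\norm{\mathrm{D}_S f}_2^2 = \sS^{2|S|} \mathrm{I}_S(f)$, so $\norm{\mathrm{D}_S f}_2^4 = \sS^{4|S|} \mathrm{I}_S(f)^2$. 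Combining with the bound $\lL \le \sS^{-2}$ recorded just before Lemma \ref{lem:Replacement step}, we get $(3\lL \rho^4)^{|S|} \sS^{4|S|} \le (3 \sS^2 \rho^4)^{|S|}$, completing the chain of inequalities.

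There is no real obstacle here: every ingredient is already in place. The only thing one must be careful about is making sure the threshold constants match ($\rho \le 1/\sqrt{12}$ is exactly what is needed so that $2\rho \le 1/\sqrt{3}$), and that the $L^2$ identity between $(\mathrm{D}_S f)_n$ and $\mathrm{D}_S f$ is applied in the correct measure. With these sanity checks, the proof reduces to stringing together Proposition \ref{prop:Reason for hypercontractivity}, standard Bonami-Beckner, and the Parseval-style formula \eqref{equation: gen influence Fourier expression}.
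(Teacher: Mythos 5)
Your proof is correct and follows the paper's argument essentially verbatim: start from Proposition \ref{prop:Reason for hypercontractivity}, apply Bonami--Beckner at noise rate $2\rho\le 1/\sqrt{3}$ on the uniform cube, use $\|(\mathrm{D}_S f)_n\|_2=\|\mathrm{D}_S f\|_2$ together with \eqref{equation: gen influence Fourier expression}, and finish with $\lL\le\sS^{-2}$. Nothing to add.
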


Now we deduce our hypercontractivity inequality.
It is convenient to prove the following
slightly stronger statement, which implies
Theorem \ref{thm:Hypercontractivity} using
$\|\mathrm{D}_S f\|_2^2 = \sS^{2|S|} \mathrm{I}_S(f)
\le \lL^{-|S|} \mathrm{I}_S(f)$ and
$\|\mathrm{T}_{1/5} f\|_4 \le
\|\mathrm{T}_{1/\sqrt{24}}f\|_{4}$
(any $\mathrm{T}_\rho$ is a contraction
in $L^p$ for any $p \ge 1$).

\begin{thm} \label{thm:hyp+}
Let $f\in L^{2}\left(\left\{ 0,1\right\} ^{n},\mu_{p}\right)$
with all $\|\mathrm{D}_S f \|_{2}^2 \le \bB \lL^{-|S|} \mb{E}[f^2]$. Then
$\|\mathrm{T}_{1/\sqrt{24}}f\|_{4} \le \bB^{1/4} \|f\|_{2}$.
\end{thm}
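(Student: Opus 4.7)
The plan is to return to Proposition \ref{prop:Reason for hypercontractivity} (rather than Theorem \ref{thm:hypref}) and bound each uniform-cube fourth norm $\|\mathrm{T}_{2\rho}((\mathrm{D}_S f)_n)\|_4^4$ using a \emph{sharper} form of Beckner--Bonami than the contraction $\|\mathrm{T}_{2\rho} g\|_4 \le \|g\|_2$ used to derive Theorem \ref{thm:hypref}. Namely, factoring $\mathrm{T}_\tau = \mathrm{T}_{1/\sqrt{3}} \mathrm{T}_{\tau\sqrt{3}}$ and applying the standard $(4,2)$-hypercontractivity of $\mathrm{T}_{1/\sqrt 3}$ on the uniform cube gives, for any $\tau \le 1/\sqrt 3$,
\[
\|\mathrm{T}_\tau g\|_4^2 \;\le\; \|\mathrm{T}_{\tau\sqrt 3} g\|_2^2 \;=\; \sum_U (3\tau^2)^{|U|}\,\hat g(U)^2.
\]
Setting $\tau = 2\rho$ with $\rho = 1/\sqrt{24}$ gives the critical identity $12\rho^2 = 1/2$, so for $g = (\mathrm{D}_S f)_n$ (whose Fourier coefficients are $\hat f(S \cup U)$ for $U$ disjoint from $S$) this yields
\[
\|\mathrm{T}_{2\rho}((\mathrm{D}_S f)_n)\|_4^4 \;\le\; \Bigl(\sum_{T \supset S} (1/2)^{|T|-|S|}\,\hat f(T)^2\Bigr)^2.
\]

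Next I would bound just \emph{one} of the two factors of this square using the hypothesis: since $(1/2)^{|T|-|S|}\le 1$, the inner sum is at most $\|\mathrm{D}_S f\|_2^2 \le \bB\lL^{-|S|}\|f\|_2^2$. Substituting back into Proposition \ref{prop:Reason for hypercontractivity}, the prefactor $(3\lL\rho^4)^{|S|}$ cancels the $\lL^{-|S|}$ factor from the hypothesis, giving
\[
\|\mathrm{T}_{1/\sqrt{24}} f\|_4^4 \;\le\; \bB\|f\|_2^2 \sum_{S} (3\rho^4)^{|S|} \sum_{T \supset S}(1/2)^{|T|-|S|}\,\hat f(T)^2.
\]

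Finally, I would swap the two sums: the resulting inner sum over $S \subset T$ equals $(1/2)^{|T|}(1+6\rho^4)^{|T|} = ((1+6\rho^4)/2)^{|T|}$, which with $6\rho^4 = 1/96$ becomes $(97/192)^{|T|} \le 1$. The whole expression is therefore at most $\bB\|f\|_2^4$, proving the theorem after taking fourth roots. The main obstacle is to make this geometric series have ratio strictly below $1$: applying Theorem \ref{thm:hypref} with the hypothesis naively produces instead a factor $(1+3\rho^4)^{|T|}$ that grows on high Fourier levels, and it is precisely the compensating $(1/2)^{|T|-|S|}$ decay coming from the sharpened Beckner--Bonami that averts this blow-up. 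The choice $\rho = 1/\sqrt{24}$ is tuned so that $12\rho^2$ lands at the threshold value $1/2$, leaving just enough room for the trade-off to succeed.
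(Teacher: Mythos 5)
Your proof is correct and takes essentially the same route as the paper: both reserve a factor of $\mathrm{T}_{1/\sqrt 2}$ of noise to gain the $2^{-(|T|-|S|)}$ decay on high Fourier levels that makes the cross-sum converge. The paper pre-composes it, rewriting $\mathrm{T}_{1/\sqrt{24}}f = \mathrm{T}_{1/\sqrt{12}}(\mathrm{T}_{1/\sqrt 2}f)$ and applying Theorem \ref{thm:hypref} to $\mathrm{T}_{1/\sqrt 2}f$; you retain it inside the Beckner--Bonami step via $\mathrm{T}_{2\rho}=\mathrm{T}_{1/\sqrt 3}\mathrm{T}_{2\rho\sqrt 3}$, and since $\mathrm{D}_S \mathrm{T}_{1/\sqrt 2}f = 2^{-|S|/2}\mathrm{T}_{1/\sqrt 2}\mathrm{D}_S f$, the resulting intermediate inequalities coincide after elementary algebra (the paper finishes with geometric ratio exactly $1$, yours with $97/192$; both suffice).
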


\begin{proof}
By Theorem \ref{thm:hypref} applied to $\mathrm{T}_{1/\sqrt{2}} f$
with $\rho=1/\sqrt{12}$ we have
\[\norm{\mathrm{T}_{1/\sqrt{24}} f}_4^4  \le \sum_{S \sub [n]}
(3\lL\rho^4)^{|S|} \| \mathrm{D}_S \mathrm{T}_{1/\sqrt{2}} f \|_2^4.\]
As $\| \mathrm{D}_S \mathrm{T}_{1/\sqrt{2}} f \|_2^2
= \sum_{E: S \sub E} 2^{-|E|} \hat{f}(E)^2
\le \sum_{E: S \sub E} \hat{f}(E)^2
= \| \mathrm{D}_S f \|_2^2 \le \bB \lL^{-|S|} \mb{E}[f^2]$
we deduce
\begin{align*}
\norm{\mathrm{T}_{1/\sqrt{24}} f}_4^4  \le \sum_{S \sub [n]}
\sum_{E: S \sub E} \bB \mb{E}[f^2] 2^{-|E|} \hat{f}(E)^2
= \bB \mb{E}[f^2] \sum_E \hat{f}(E)^2 = \bB \|f\|_2^4.
& \qedhere
\end{align*}
\end{proof}

\subsection{Hypercontractivity in practice} \label{sec:practice}
We will mostly use the following application of the hypercontractivity
theorem.
\begin{lem} \label{lem:applying_hypercontractivity}
  Let $f$ be a function of degree $r$. Suppose that
  $\mathrm{I}_S(f)\le \delta$ for all $|S|\le r$. Then \[
    \|f\|_4\le
    5^{\frac{3r}{4}}\delta^{\frac{1}{4}}\left\| f \right\|_2^{0.5}.
    \]
\end{lem}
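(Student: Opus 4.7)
The plan is to apply Theorem~\ref{thm:Hypercontractivity} after inverting the noise operator $\mathrm{T}_{1/5}$ on the degree-$\le r$ subspace---the standard ``de-noising'' trick for low-degree functions. Since $f$ has degree $r$ and $\mathrm{T}_{1/5}$ acts on that subspace as the Fourier multiplier $\chi_S \mapsto 5^{-|S|}\chi_S$, it is invertible there: I define $g$ to be the unique degree-$r$ function with $\hat g(S) = 5^{|S|}\hat f(S)$ for $|S|\le r$, so that $\mathrm{T}_{1/5}\,g = f$ by construction.

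Next, I would estimate the generalised influences and $2$-norm of $g$. Using the Fourier expressions of Section~\ref{sec:not}, one obtains
\[
\mathrm{I}_S(g) \;=\; \sigma^{-2|S|}\!\!\!\sum_{E \supseteq S,\,|E|\le r}\!\!\! 25^{|E|}\,\hat f(E)^2 \;\le\; 25^{r}\,\mathrm{I}_S(f) \;\le\; 25^{r}\,\delta,
\]
and by the same bookkeeping $\|g\|_2^2 \le 25^{r}\|f\|_2^2$. In particular $g$ has $\beta$-small generalised influences with $\beta \le 25^{r}\delta/\|g\|_2^2$.

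Applying Theorem~\ref{thm:Hypercontractivity} to $g$ then yields
\[
\|f\|_4 \;=\; \|\mathrm{T}_{1/5}\,g\|_4 \;\le\; \beta^{1/4}\|g\|_2 \;=\; (25^{r}\delta)^{1/4}\|g\|_2^{1/2},
\]
and substituting $\|g\|_2 \le 5^{r}\|f\|_2$ gives the desired inequality with a constant of the form $5^{O(r)}$.

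The overall strategy is the standard ``de-noising plus hypercontractivity'' pattern, and is robust. The main (and essentially the only) technical subtlety is the precise numerical constant: a crude execution of the above yields the slightly weaker $\|f\|_4 \le 5^{r}\delta^{1/4}\|f\|_2^{1/2}$, while obtaining the sharp exponent $5^{3r/4}$ requires more careful distribution of the powers of $5$ between the $\beta^{1/4}$ and $\|g\|_2^{1/2}$ factors. This can be achieved by invoking the refined Theorem~\ref{thm:hyp+} (with the slightly larger noise rate $1/\sqrt{24}$) in place of Theorem~\ref{thm:Hypercontractivity} and using the inequality $\sigma^2\lambda \le 1$ to pass between $\|\mathrm{D}_S g\|_2^2$ and $\lambda^{-|S|}\|g\|_2^2$. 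I expect this constant optimisation to be the main bookkeeping obstacle; the overall proof is then a two-line application of hypercontractivity to the de-noised function $g$.
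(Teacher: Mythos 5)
Your proposal matches the paper's proof essentially line for line: write $f=\mathrm{T}_{1/5}h$ with $h=\sum_{|T|\le r}5^{|T|}\hat f(T)\chi_T$, bound $\mathrm{I}_S(h)\le 5^{2r}\mathrm{I}_S(f)\le 5^{2r}\delta$, apply Theorem~\ref{thm:Hypercontractivity}, and finish with $\|h\|_2\le 5^r\|f\|_2$ from Parseval. This is exactly the paper's argument, including the choice of $\alpha=5^{2r}\delta/\|h\|_2^2$.

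One small but worth noting remark: your observation that a straightforward execution gives $5^r$ rather than $5^{3r/4}$ is correct, and in fact the paper's own arithmetic also yields $5^r$: from $\alpha^{1/4}\|h\|_2 = 5^{r/2}\delta^{1/4}\|h\|_2^{1/2}$ and $\|h\|_2\le 5^r\|f\|_2$ one gets $5^{r/2}\cdot 5^{r/2}=5^r$, not $5^{3r/4}$. So the stated constant $5^{3r/4}$ is a minor slip in the paper and not something your approach (or theirs) should be expected to reproduce. Your suggestion of routing through Theorem~\ref{thm:hyp+} with noise rate $1/\sqrt{24}$ also does not recover $5^{3r/4}$ (it gives roughly $24^{r/2}\approx 4.9^r$); but since every downstream use of this lemma only needs a bound of the form $C^r\delta^{1/4}\|f\|_2^{1/2}$ for some absolute $C$, the discrepancy is harmless. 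In short: your proof is correct and is the paper's proof; the ``sharp constant'' you were chasing is not actually achieved by the paper either.
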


The proof uses the following lemma, which is immediate from the Fourier expression in \eqref{equation: gen influence Fourier expression}.

\begin{lem} \label{obs}
$\mathrm{I}_S(f^{\le r}) \le \mathrm{I}_S(f)$
for all $S \sub \left[n\right]$ and $\mathrm{I}_S(f^{\le r})=0$ if $\left|S\right|>r$.
\end{lem}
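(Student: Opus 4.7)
The plan is to prove both parts directly from the Fourier expression \eqref{equation: gen influence Fourier expression}, namely $\mathrm{I}_S(f) = \sigma^{-2|S|}\sum_{E: S \subset E} \hat{f}(E)^2$. The key observation is that the truncation operator $f \mapsto f^{\le r}$ acts on Fourier coefficients simply by zeroing out all coefficients $\hat{f}(E)$ with $|E|>r$; that is, $\widehat{f^{\le r}}(E) = \hat{f}(E)$ when $|E| \le r$ and $\widehat{f^{\le r}}(E) = 0$ otherwise.

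First, I would substitute this into the Fourier formula to get
\[
\mathrm{I}_S(f^{\le r}) = \sigma^{-2|S|} \sum_{E: S \subset E} \widehat{f^{\le r}}(E)^2 = \sigma^{-2|S|} \sum_{\substack{E: S \subset E \\ |E| \le r}} \hat{f}(E)^2.
\]
Comparing this sum term-by-term with the expression for $\mathrm{I}_S(f)$, every summand is nonnegative and the index set on the right is a subset of the index set for $\mathrm{I}_S(f)$, so $\mathrm{I}_S(f^{\le r}) \le \mathrm{I}_S(f)$. This gives the first assertion.

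For the second assertion, suppose $|S|>r$. Then any $E$ with $S \subset E$ satisfies $|E| \ge |S| > r$, so $\widehat{f^{\le r}}(E) = 0$ for every such $E$. Hence the sum in the Fourier expression is empty (or identically zero), and $\mathrm{I}_S(f^{\le r}) = 0$. There is no real obstacle here; the lemma is an immediate bookkeeping consequence of the Fourier formula, which is why the authors describe it as immediate.
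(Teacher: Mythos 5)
Your proof is correct and matches the paper's approach exactly: the paper declares the lemma ``immediate from the Fourier expression \eqref{equation: gen influence Fourier expression},'' and your argument spells out precisely why, by noting that truncation zeroes the coefficients $\hat f(E)$ with $|E|>r$ and then comparing the resulting sums term by term.
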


\begin{proof}[Proof of Lemma \ref{lem:applying_hypercontractivity}]
Write $f = \mathrm{T}_{1/5}(h)$, where
$h = \sum_{|T| \le r} 5^{|T|} \hat{f}(T) \chi_T$.
We will bound the 4-norm of $f$ by applying Theorem
\ref{thm:Hypercontractivity} to $h$,
so we need to bound the generalised influences of $h$.

By Lemma \ref{obs}, for $S \sub [n]$ we have
$\mathrm{I}_S(h)=0$ if $|S|>r$. For $|S|\le r$,  we have
\[
 \mathrm{I}_S(h) = \sS^{-2|S|} \sum_{T: S \sub T, |T| \le r}
 5^{2|T|} \hat{f}(T)^2
\le 5^{2r} \mathrm{I}_S(f) \le 5^{2r} \delta
= \alpha \|h\|_2^2,
\]
where
$\alpha = 5^{2r} \delta /\|h\|_2^2$.
By Theorem \ref{thm:Hypercontractivity}, we have
\[
  \|f \|_4 = \|\mathrm{T}_{1/5} h \|_{4}
\le \alpha^{\frac{1}{4}} \| h \|_{2}
= 5^{r/2} \delta^{\frac{1}{4}}  \sqrt{ \|h\|_2}
\le 5^{\frac{3r}{4}} \delta^{\frac{1}{4}}\sqrt{\|f\|_2}.
\]
In the final inequality we used $\|h\|_2\leq 5^r \|f\|_2$,
which follows from Parseval.
\end{proof}

\section{Equivalence between globalness notions} \label{sec:equiv}

Above we have introduced two notions of what it means for
a Boolean function $f$ to be global.
The first globalness condition,
which appears e.g.\ in Theorem \ref{thm:Bourgain+},
is that the measure of $f$ is not sensitive
to restrictions to small sets of coordinates.
The second condition is a bound on generalised
influences $I_S(f)$ for small sets $S$.
In this section we show that we can move
freely between these notions for two classes
of Boolean functions:
namely sparse ones and monotone ones.

Throughout we assume $p \le 1/2$, which does not
involve any loss in generality in our main results;
indeed, if $p>1/2$ we can consider
the dual $f^*(x) = 1-f(1-x)$ of any Boolean function $f$,
for which $\mu_{1-p}(f^*) = 1-\mu_p(f)$
and $\mathrm{I}_{\mu_{1-p}}(f^*) = \mathrm{I}_{\mu_p}(f)$.

We start by formalising our first notion of globalness.

\begin{defn} \label{def:global}
 We say that a Boolean function $f$
 is \emph{$\left(r,\delta\right)$-global} if
 $\mu_p\left(f_{J\to 1}\right)\le \mu_p\left(f\right)+\delta$
 for each set $J$ of size at most $r$.
\end{defn}

We remark that Definition \ref{def:global}
is a rather weak notion of globalness,
so it is quite surprising that it suffices
for Theorems \ref{thm:Variant of Kahn kalai}
and \ref{thm:Noise sensitivity},
where one might have expected to need
the stricter notion that
$\mu_p(f_{J\to 1})$ is close to $\mu_p(f)$.

The following lemma shows that if a sparse Boolean function
is global in the sense of Definition \ref{def:global}
then it has small generalised influences.

\begin{lem}
\label{lem: gen influence control for global functions}
Suppose that $f: \{0,1\}^n \to \{0,1\}$
is an  $\left(r,\delta\right)$-global Boolean function
with $\mu_p(f) \le \delta$.
Then $\mathrm{I}_{S}\left(f^{\le r}\right) \le
\mathrm{I}_{S}\left(f\right) \le 8^{r} \delta$
for all $S\sub [n]$ with $|S| \le r$.
\end{lem}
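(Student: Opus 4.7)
The first inequality $\mathrm{I}_S(f^{\le r}) \le \mathrm{I}_S(f)$ is immediate from Lemma \ref{obs}, so the real work is to prove $\mathrm{I}_S(f) \le 8^r \delta$ for every $S \subseteq [n]$ with $|S| \le r$.

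The plan is to expand $\mathrm{I}_S(f) = \mathbb{E}[g^2]$ with $g = \sum_{x \in \{0,1\}^S}(-1)^{|S|-|x|} f_{S \to x}$, and to exploit that $f$ is $\{0,1\}$-valued in order to reduce a second-moment estimate to a bound on sums of first moments. Concretely, since each term $f_{S\to x}(y) \in \{0,1\}$, we have both $|g(y)| \le 2^{|S|}$ (trivially, as a sum of $2^{|S|}$ terms each bounded by $1$) and $|g(y)| \le \sum_x f_{S\to x}(y)$, so
\[
g^2 \;\le\; 2^{|S|}\,|g| \;\le\; 2^{|S|}\sum_{x \in \{0,1\}^S} f_{S\to x}.
\]
Taking expectations, the problem reduces to bounding $\sum_x \mu_p(f_{S\to x})$.

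To estimate each $\mu_p(f_{S \to x})$, parameterise $x$ by $T = \{i \in S : x_i = 1\}$, so that $f_{S\to x} = (f_{T\to 1})_{(S\setminus T) \to 0}$. For any nonnegative function $h$, a one-line conditioning argument gives the elementary bound $\mu_p(h_{J\to 0}) \le \mu_p(h)/(1-p)^{|J|}$ (only the $x_J = 0$ term contributes on the right of $\mu_p(h) = \sum_{z} p^{|z|}(1-p)^{|J|-|z|} \mu_p(h_{J\to z})$, all summands being nonnegative). Applying this with $h = f_{T\to 1}$ and invoking the $(r,\delta)$-globalness bound $\mu_p(f_{T\to 1}) \le \mu_p(f) + \delta \le 2\delta$ (for $T \ne \emptyset$, permitted since $|T| \le r$), together with $(1-p)^{-1} \le 2$, yields
\[
\mu_p(f_{S\to x}) \;\le\; 2\delta \cdot 2^{|S|-|T|} \qquad (T \ne \emptyset),
\]
while for $T = \emptyset$ the same conditioning with only sparsity gives $\mu_p(f_{S\to 0}) \le \delta \cdot 2^{|S|}$. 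Summing using $\sum_{T \subseteq S} 2^{|S|-|T|} = 3^{|S|}$ produces $\sum_x \mu_p(f_{S\to x}) \le 2\delta\cdot 3^{|S|} - \delta\cdot 2^{|S|}$, and hence $\mathrm{I}_S(f) \le 2\delta\cdot 6^{|S|} - \delta\cdot 4^{|S|}$.

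The only remaining step is the numerical inequality $2\cdot 6^k - 4^k \le 8^k$ for integer $k \ge 0$, which, after division by $6^k$, reads $(4/3)^k + (2/3)^k \ge 2$, with equality at $k = 0,1$ and strict inequality for $k \ge 2$. The main conceptual point is really the very first step: the $L^\infty$ bound $g^2 \le 2^{|S|}|g|$ exploits Booleanness to turn $\mathrm{I}_S(f)$ into a sum of first moments of restrictions of $f$, which is exactly the shape of quantity that the globalness hypothesis directly controls; the rest is routine bookkeeping, and I do not anticipate any serious obstacle.
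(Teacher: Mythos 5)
Your proof is correct, and it reaches the paper's $8^{r}\delta$ bound by a genuinely different decomposition at the key step. The paper applies the $L^2$ triangle inequality to bound $\sqrt{\mathrm{I}_S(f)}=\|g\|_2\le\sum_x\|f_{S\to x}\|_2=\sum_x\sqrt{\mu_p(f_{S\to x})}$ (where $g=\sum_{x\in\{0,1\}^S}(-1)^{|S|-|x|}f_{S\to x}$), then uses a single uniform estimate $\mu_p(f_{S\to x})\le 2^{r}\delta$; squaring gives $\mathrm{I}_S(f)\le 4^{|S|}\cdot 2^{r}\delta\le 8^{r}\delta$. You avoid Minkowski entirely via the pointwise bound $g^2\le 2^{|S|}|g|\le 2^{|S|}\sum_x f_{S\to x}$, which uses Booleanness in a slightly more combinatorial way, take expectations, and then bound each $\mu_p(f_{S\to x})$ by a quantity depending on $|T|=\{i\in S:x_i=1\}$, summing the resulting series exactly. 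Both arguments rest on the same three ingredients --- the conditioning bound $\mu_p(h_{J\to 0})\le(1-p)^{-|J|}\mu_p(h)$ for nonnegative $h$, the globalness hypothesis for nonempty $T$, and the sparsity hypothesis for $T=\emptyset$ --- and both land on exactly $8^{r}\delta$. Your version is a touch more elementary and bookkeeps the sum over $T$ precisely, at the cost of the closing numerical inequality $2\cdot 6^{k}-4^{k}\le 8^{k}$; the paper's version is shorter because a crude uniform estimate suffices after Minkowski. (That closing inequality, equivalently $(4/3)^{k}+(2/3)^{k}\ge 2$, does not follow from AM-GM as one might first try; it does follow from convexity of $k\mapsto(4/3)^{k}+(2/3)^{k}$ together with equality at $k=0,1$, so the assertion you make is correct but deserves a one-line justification.)
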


\begin{proof}
The first inequality is from Lemma \ref{obs}. Next, we estimate
\begin{align} \label{eq:s}
\sqrt{\mathrm{I}_{S}\left(f\right)}  =
\left \| \sum_{x \in \{0,1\}^S}\left(-1\right)^{\left|S\right | -
  |x|}f_{S\to x} \right \|_2 \le \sum_{x\in \{0,1\}^S}\left\|f_{S\to
  x}\right \|
  _2=\sum_{x\in\{0,1\}^S}\sqrt{\mu_p(f_{S\to x})}.
\end{align}
Next we fix $x \in \{0,1\}^S$ and claim that
$\mu_p(f_{S\to x}) \le 2^r\delta$.
By substituting this bound in \eqref{eq:s}
we see that this suffices to complete the proof.
Let $T$ be the set of all $i\in S$ such that $x_i=1$.
Since $f$ is nonnegative, we have
$\mu_p(f_{T\to 1})\ge \left(1-p\right)^{\left|S\backslash
    T\right|}\mu_p(f_{S\to x})$.
As $f$ is $\left(r,\delta\right)$-global
and $\mu_p(f) \le \delta$, we have
$\mu_p\left(f_{T\to1}\right)\le 2\delta$,
so $\mu_p(f_{S\to x})\le (1-p) ^{|T|-r}2\delta \le 2^r\delta$,
where for the last inequality we can assume $T \ne \es$,
as $\mu_p\left(f_{T\to 1}\right) = \mu_p(f) \leq\delta \le 2^r\delta$.
This completes the proof.
\end{proof}

Next we show an analogue of the previous lemma
replacing the assumption that $f$ is sparse
by the assumption that $f$ is monotone.

\begin{lem} \label{rem}
Let $f\colon\{0,1\}^n\to\{0,1\}$
be a monotone Boolean $\left(r,\delta\right)$-global function.
Then $\mathrm{I}_S\left(f\right) \le 8^{r}\delta$
for every nonempty $S$ of size at most $r$.
\end{lem}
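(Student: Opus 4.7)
The plan is to reduce to Lemma \ref{lem: gen influence control for global functions} by passing from $f$ to its $i$-th pivotality function, which the monotonicity of $f$ makes both sparse and global. Fix any $i \in S$ and set $g = f_{i\to 1} - f_{i\to 0}$, viewed as a Boolean function on $\{0,1\}^{[n]\setminus\{i\}}$ (it is $\{0,1\}$-valued because $f$ is monotone). Applying $\mathrm{D}_i$ first and then $\mathrm{D}_{S\setminus\{i\}}$ yields $\mathrm{D}_S f = \sigma \cdot \mathrm{D}_{S\setminus\{i\}} g$, so by the Fourier formula for generalised influences we have $\mathrm{I}_S(f) = \mathrm{I}_{S\setminus\{i\}}(g)$; the task reduces to bounding generalised influences of $g$.

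For any $J \subseteq [n]\setminus\{i\}$ with $|J| \le r-1$, conditioning on $x_i$ gives
\[
  \mu_p(f_{J\to 1}) \;=\; p\, \mu_p(f_{J\cup\{i\}\to 1}) + (1-p)\, \mu_p(f_{J\to 1,\, i\to 0}),
\]
from which a short calculation yields
\[
  \mu_p(g_{J\to 1}) \;=\; \frac{\mu_p(f_{J\cup\{i\}\to 1}) - \mu_p(f_{J\to 1})}{1-p}.
\]
The $(r,\delta)$-globalness of $f$ bounds the first term in the numerator by $\mu_p(f) + \delta$, while monotonicity of $f$ gives $\mu_p(f_{J\to 1}) \ge \mu_p(f)$; combined with $p \le 1/2$ this yields $\mu_p(g_{J\to 1}) \le 2\delta$. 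Specialising to $J = \emptyset$ gives $\mu_p(g) \le 2\delta$, so $g$ is simultaneously sparse with parameter $2\delta$ and $(r-1,\, 2\delta)$-global.

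Applying Lemma \ref{lem: gen influence control for global functions} to $g$ with parameters $r' = r-1$ and $\delta' = 2\delta$ gives $\mathrm{I}_{S\setminus\{i\}}(g) \le 8^{r-1}\cdot 2\delta$, and hence
\[
  \mathrm{I}_S(f) \;=\; \mathrm{I}_{S\setminus\{i\}}(g) \;\le\; \tfrac{1}{4}\cdot 8^r\, \delta \;\le\; 8^r\, \delta,
\]
as required. The main obstacle is finding the right linearization: the pivotality function $g$ is exactly what converts the $(r,\delta)$-global hypothesis on $f$ — which only controls $\mu_p(f_{J\to 1})$ \emph{relative} to $\mu_p(f)$ — into an \emph{absolute} measure bound on $g$, because monotonicity lets us absorb the comparison term $\mu_p(f_{J\to 1})$ using $\mu_p(f_{J\to 1}) \ge \mu_p(f)$.
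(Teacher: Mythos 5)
Your proof is correct, and it takes a genuinely different route from the paper's. The paper proves Lemma \ref{rem} by induction on $r$: it first establishes the auxiliary Lemma \ref{restrict} showing that globalness is inherited (with a slightly degraded parameter) by the restrictions $f_{i\to 0}$ and $f_{i\to 1}$, and then applies the triangle inequality $\sqrt{\mathrm{I}_{S\cup\{i\}}(f)}\le \sqrt{\mathrm{I}_S(f_{i\to 1})}+\sqrt{\mathrm{I}_S(f_{i\to 0})}$ together with the induction hypothesis. You instead make the single observation that the discrete derivative $g=f_{i\to 1}-f_{i\to 0}$ is itself a Boolean function (this is precisely where monotonicity enters), verify that $g$ is both sparse ($\mu_p(g)\le 2\delta$) and $(r-1,2\delta)$-global --- the key being that the relative globalness bound $\mu_p(f_{J\cup\{i\}\to 1})\le\mu_p(f)+\delta$ combines with $\mu_p(f_{J\to 1})\ge\mu_p(f)$ to produce an absolute bound on $\mu_p(g_{J\to 1})$ --- and then invoke Lemma \ref{lem: gen influence control for global functions} as a black box, using $\mathrm{I}_S(f)=\mathrm{I}_{S\setminus\{i\}}(g)$. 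The verification steps all check out (in particular $\mathrm{D}_S(f)=\sigma\,\mathrm{D}_{S\setminus\{i\}}(g)$ gives $\mathrm{I}_S(f)=\mathrm{I}_{S\setminus\{i\}}(g)$, the conditioning identity is correct, and $|S\setminus\{i\}|\le r-1$ meets the size hypothesis of Lemma \ref{lem: gen influence control for global functions}). Your approach buys a one-shot reduction to the sparse case with no induction and no need for Lemma \ref{restrict}, makes the role of monotonicity conceptually transparent, and incidentally yields the marginally sharper constant $2\cdot 8^{r-1}\delta$; the paper's inductive route is more self-contained in the sense that it reproves the key inequality rather than delegating to the sparse lemma, which is perhaps why the authors present them as parallel analogues rather than one reducing to the other.
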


The proof is based on the following lemma
showing that globalness is inherited
(with weaker parameters) under restriction of a coordinate.

\begin{lem} \label{restrict}
    Suppose that $f$ is a monotone
    $\left(r,\delta\right)$-global function. Then for each  $i$:
    \begin{enumerate}
    \item{$f_{i\to 1}$ is $\left(r-1,\delta\right)$-global,}
    \item{$\mu_p\left(f_{i\to 0}\right) \ge
        \mu_p\left(f\right)-\frac{p\delta}{1-p}$,}
    \item{$f_{i\to 0}$ is $\left( r-1, \frac{\delta}{1-p}\right)$-global}.
     \end{enumerate}
   \end{lem}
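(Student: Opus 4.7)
The three statements all reduce to combining the globalness hypothesis for $f$ with monotonicity and the law of total expectation in the coordinate $i$, namely
\[\mu_p(g) = p\,\mu_p(g_{i\to 1}) + (1-p)\,\mu_p(g_{i\to 0})\]
applied to $g=f$ and $g=f_{J\to 1}$. So my plan is to handle each part as a short computation, reusing these two identities.

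\textbf{Part (1).} Given $J$ of size $\le r-1$ in the coordinate set of $f_{i\to 1}$ (so $i\notin J$), I observe the trivial identity $(f_{i\to 1})_{J\to 1} = f_{J\cup\{i\}\to 1}$. Since $|J\cup\{i\}|\le r$, the $(r,\delta)$-globalness of $f$ gives $\mu_p(f_{J\cup\{i\}\to 1})\le \mu_p(f)+\delta$. Finally, monotonicity of $f$ gives $\mu_p(f)\le \mu_p(f_{i\to 1})$, so the right side is at most $\mu_p(f_{i\to 1})+\delta$, as required.

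\textbf{Part (2).} Applying $(r,\delta)$-globalness to the singleton $\{i\}$, I get $\mu_p(f_{i\to 1})\le \mu_p(f)+\delta$. Substituting this into the total-expectation identity
\[\mu_p(f) = p\,\mu_p(f_{i\to 1}) + (1-p)\,\mu_p(f_{i\to 0})\]
and rearranging yields $(1-p)\mu_p(f_{i\to 0}) \ge \mu_p(f) - p(\mu_p(f)+\delta) = (1-p)\mu_p(f) - p\delta$, which is the claimed bound.

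\textbf{Part (3).} This is the only step with a small subtlety. For $J$ of size $\le r-1$ with $i\notin J$, I write the total-expectation identity for both $f_{J\to 1}$ and $f$:
\[(1-p)\bigl(\mu_p((f_{i\to 0})_{J\to 1}) - \mu_p(f_{i\to 0})\bigr) = \bigl(\mu_p(f_{J\to 1}) - \mu_p(f)\bigr) - p\bigl(\mu_p(f_{J\to 1,\,i\to 1}) - \mu_p(f_{i\to 1})\bigr).\]
The first bracket on the right is $\le \delta$ by $(r,\delta)$-globalness of $f$ applied to $J$. The key observation is that the second bracket is $\ge 0$ by monotonicity (raising more coordinates cannot decrease $\mu_p$), so subtracting it only helps. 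Dividing by $1-p>0$ gives the required bound $\mu_p((f_{i\to 0})_{J\to 1}) \le \mu_p(f_{i\to 0}) + \delta/(1-p)$.

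The only real care needed is in part (3), where one must correctly identify the sign of the monotonicity term so that it is used favourably; once that is noted, the proof is essentially an arithmetic identity plus two invocations of the hypotheses.
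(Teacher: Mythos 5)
Your proof is correct. Parts (1) and (2) coincide with the paper's argument. Part (3) takes a genuinely different, though equally short, route: the paper bounds $\mu_p\left(\left(f_{i\to 0}\right)_{S\to 1}\right)$ by first using monotonicity to compare it with $\mu_p\left(f_{S\cup\{i\}\to 1}\right)$, then applies $(r,\delta)$-globalness to the set $S\cup\{i\}$ (of size up to $r$), and finally invokes part (2) to pass from $\mu_p(f)$ to $\mu_p(f_{i\to 0})$. You instead write the exact conditional-expectation identity comparing $f_{J\to 1}$ and $f$ in the $i$-th coordinate, apply globalness only to $J$ (of size $\le r-1$), and use monotonicity on a different pair, namely $\mu_p(f_{J\to 1,\,i\to 1})\ge\mu_p(f_{i\to 1})$, to discard the unfavourable term. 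Your route for (3) does not rely on (2), making the three parts logically independent, whereas the paper's (3) depends on (2); both proofs are equally elementary, and I have checked your identity and sign bookkeeping -- they are right.
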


    \begin{proof}
 To see (1), note that for any $J$ with $|J| \le r-1$ we have
 $\mu_p((f_{i \to 1})_{J \to 1}) = \mu_p(f_{J \cup \{i\} \to 1})
 \le \mu_p(f) + \delta \le \mu_p(f_{i \to 1}) + \delta$,
 where the last inequality holds as $f$ is monotone.
 Statement (2) follows from the upper bound $\mu_p\left(f_{i\to
        1}\right)\le \mu_p\left(f\right)+\delta$
      and    $\mu_p\left(f_{i\to
          0}\right)=\frac{\mu_p\left(f\right)-p\mu_p\left(f_{i\to1}\right)}{\left(1-p\right)}.$

   For (3), we note that by monotonicity
   $\mu_p\left(\left(f_{i\to
          0}\right)_{S\to 1}\right)
          \le \mu_p\left(f_{\{i\}\cup S\to
        1}\right).$
        As $f$ is $\left(r,\delta\right)$-global,
    \[ \mu_p\left(f_{S\cup\{i\}\to 1}\right)
    \le \mu_p\left(f\right)+\delta
    \le \mu_p\left(f_{i\to 0}\right)+\delta+\frac{p\delta}{1-p}=\mu_p\left(f_{i\to 0}\right)+\frac{\delta}{1-p}, \]
    using (2). Hence, $f_{i\to 0}$ is
    $\left(r,\frac{\delta}{1-p}\right)$-global.
  \end{proof}

\begin{proof}[Proof of Lemma \ref{rem}]
We argue by induction on $r$. In the case where  $r=1$,
Lemma \ref{restrict} and monotonicity of $f$
imply (using $p \le 1/2$)
 \[\mathrm{I}_{i}\left(f\right)=\mu_p\left(f_{i\to 1}\right) -
   \mu_p\left(f_{i\to 0}\right)\le \delta
   +\frac{p\delta}{1-p}\le 2\delta. \]
Now we bound $\mathrm{I}_{S\cup\left\{i\right\}}\left(f\right)$
for $r>1$ and $S$ of size $r-1$ with $i \notin S$.

Note that $\mathrm{D}_{S\cup\{i\}}\left(f\right)=\mathrm{D}_{S}
\left[\mathrm{D}_i (f)\right]$. By the triangle inequality,
we have
 \[
   \sqrt{\mathrm{I}_{S\cup\left\{i\right\}} \left(f\right)} =
   \sigma^{-r}\|\mathrm{D}_{S\cup\left\{i\right\}}(f)\|_2 =
   \sigma^{1-r}\|\mathrm{D}_{S}(f_{i\to 1})-\mathrm{D}_S(f_{i\to 0})\|_2
   \le
   \sqrt{ \mathrm{I}_S\left(
       f_{i\to1} \right) } +
   \sqrt{ \mathrm{I}_S{\left(f_{i\to
       0} \right)}}.
\]
By the induction hypothesis and Lemma \ref{restrict}
 the right hand side is at most
\[
  \sqrt{8^{r-1}\delta}
  +\sqrt{8^{r-1}2\delta}
  \le \sqrt{8^r\delta}.
\]
Taking squares, we obtain  $\mathrm{I}_{S\cup\left\{i\right\}}\left(f\right)\le
8^r\delta.$
\end{proof}

We conclude this section by showing the converse direction of
the equivalence between our two notions of globalness,
i.e.\ that if the generalised influences of a function $f$
are small then $f$ is global in the sense of its measure
being insensitive to restrictions to small sets.
(We will not use the lemma in the sequel
but include the proof for completeness.)

\begin{lem}
  Let $f\colon\{0,1\}^n\to \{0,1\}$ be a Boolean function and let $r>0$. Suppose
  that $\mathrm{I}_S[f] \le \delta$ for each nonempty set $S$ of at most
  $r$ coordinates. Then $f$ is $\left(r,4^{r}\delta\right)$-global.
\end{lem}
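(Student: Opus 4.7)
The plan is to fix an arbitrary $J \subseteq [n]$ with $|J| \le r$ and bound $\mu_p(f_{J \to 1}) - \mu_p(f)$ directly by a $p$-biased Fourier calculation, using the Boolean hypothesis at one crucial step. First I would expand $f = \sum_S \hat f(S) \chi_S$ and compute $\mu_p(f_{J \to 1}) = \mb{E}_{y_{\bar J}}[f(1_J, y_{\bar J})]$ using $\chi_i(1) = (1-p)/\sigma$ together with $\mb{E}[\chi_i] = 0$; the latter kills every character whose support is not entirely contained in $J$. This yields the identity
\[
\mu_p(f_{J \to 1}) - \mu_p(f) \;=\; \sum_{\emptyset \ne T \subseteq J} \Big(\tfrac{1-p}{\sigma}\Big)^{|T|} \hat f(T).
\]

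The heart of the argument is then to establish the sharp single-coefficient bound $|\hat f(T)| \le \sigma^{|T|}\,\mathrm{I}_T(f)$ for Boolean $f$, which is strictly stronger than what Cauchy--Schwarz alone would deliver. The key observation is that the normalised discrete derivative
\[
\sigma^{-|T|}\,\mathrm{D}_T f \;=\; \sum_{x \in \{0,1\}^T}(-1)^{|T|-|x|} f_{T \to x}
\]
is \emph{integer-valued}, since it is a $\pm 1$-combination of $\{0,1\}$-valued functions. Because $|n| \le n^2$ for every integer $n$, this forces the pointwise inequality $|\sigma^{-|T|}\mathrm{D}_Tf| \le (\sigma^{-|T|}\mathrm{D}_Tf)^2$; taking expectations gives $\mb{E}|\mathrm{D}_Tf| \le \sigma^{|T|}\,\mathrm{I}_T(f)$, and combining with the Fourier identity $\hat f(T) = \mb{E}[\mathrm{D}_Tf]$ yields $|\hat f(T)| \le \sigma^{|T|}\,\mathrm{I}_T(f) \le \sigma^{|T|}\delta$ for all $T$ with $1 \le |T| \le r$.

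Plugging these two estimates together gives
\[
|\mu_p(f_{J \to 1}) - \mu_p(f)| \;\le\; \delta \sum_{\emptyset \ne T \subseteq J}(1-p)^{|T|} \;\le\; \big((2-p)^{|J|} - 1\big)\,\delta \;\le\; 2^r \delta \;\le\; 4^r \delta,
\]
which settles the claim (with a slightly better constant than stated). The only non-routine ingredient is the Boolean-ness trick: the naive Cauchy--Schwarz bound $|\hat f(T)| \le \sigma^{|T|}\sqrt{\mathrm{I}_T(f)}$ would yield only an $O(\sqrt{\delta})$ conclusion, so recognising that $\sigma^{-|T|}\mathrm{D}_Tf$ takes integer values is exactly what converts $\sqrt\delta$ into $\delta$ and makes the converse direction of globalness work.
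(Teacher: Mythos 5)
Your proof is correct, and it takes a genuinely different route from the paper. The paper proceeds by induction on $r$: the base case $r=1$ uses the Boolean identity $\mu_p(f_{i\to1})-\mu_p(f_{i\to0}) \le \Pr[f_{i\to1}\ne f_{i\to0}] = \|f_{i\to1}-f_{i\to0}\|_2^2 = \mathrm{I}_i[f]$, and the inductive step shows that every generalised influence of $f_{i\to1}$ at a set of size $\le r-1$ is at most $4\delta$, then applies the inductive hypothesis; the factor $4$ compounds to give $4^r$. You instead argue in one shot: expand $\mu_p(f_{J\to1})-\mu_p(f)$ as $\sum_{\emptyset\ne T\subseteq J}\bigl(\tfrac{1-p}{\sigma}\bigr)^{|T|}\hat f(T)$, then bound each coefficient via the observation that $\sigma^{-|T|}\mathrm{D}_T f$ is integer-valued and $|n|\le n^2$ for all integers $n$, so that $|\hat f(T)| = |\mb{E}[\mathrm{D}_T f]| \le \mb{E}|\mathrm{D}_T f| \le \sigma^{|T|}\mathrm{I}_T(f)$. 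Both proofs exploit Boolean-ness at exactly one point: the paper only at the level of first derivatives (where $|a-b|\le(a-b)^2$ for $a,b\in\{0,1\}$), you at all levels simultaneously via integer-valuedness of higher derivatives, which is what lets you avoid the restriction-and-induct machinery. Your argument is shorter, non-inductive, and delivers a slightly sharper constant ($(2-p)^r-1 \le 2^r$ rather than $4^r$); the paper's inductive argument is in the same style as Lemma \ref{rem}, which keeps the two globalness directions visibly parallel.
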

\begin{proof}
To facilitate a proof by induction on $r$
we prove the slightly  stronger statement that
$f$ is $(r,\sum_{i=1}^{r}4^{i-1}\delta)$-global.
Suppose first that $r=1$. Our
goal is to show that if $\mathrm{I}_i[f]<\delta$, then
$\mu_p(f_{i\to1})-\mu_p(f_{i\to 0})<\delta$, and indeed,
\[
  \mu_p(f_{i\to1})-\mu_p(f_{i\to 0})
  \le \Pr[f_{i\to 1}\ne f_{i\to
    0}]= \|f_{i\to 1}-f_{i\to 0}\|_2^2
    =\|\mathrm{D}_i[f]\|_2^2=\mathrm{I}_{i}[f] < \delta.
\]
Now suppose that $r>1$ and that the lemma holds with $r-1$ in place of
$r$. The lemma will follow once we show that for all $i$ and all
nonempty sets
$S$ of size at most $r-1$, we have $\mathrm{I}_{S}[f_{i\to 1}]\le 4\delta$.
Indeed, the induction hypothesis and the $n=1$ case
will imply that for each set $S$ of size at most $r$ and each $i\in S$
we have $\mu_p(f_{S\to 1}) \le \mu_p(f_{i\to 1}) +
\sum_{i=1}^{r-1}4^{i-1}\cdot 4\delta \le \mu_p(f) +
\sum_{i=1}^{r}4^{i-1}\delta$.

We now turn to showing the desired upper bound on the generalised
influences of  $f_{i\to 1}$. Let $S$ be a set of size at
most $r-1$. Recall that $\mathrm{I}_S[f_{i\to 1}]=\|
\mathrm{D}_{S}[f_{i\to 1}] \|_2^2$. We may assume that $i\notin S$ for
otherwise the generalised influence $\mathrm{I}_S[f_{i\to 1}]$ is
0. We make two observations. Firstly, we have
\[
  \mathrm{D}_{S\cup \{i\}}[f]=\mathrm{D}_{S}[f_{i\to 1}]
  -\mathrm{D}_S[f_{i\to 0}].
\]
Secondly, conditioning on the ouput of the coordinate $i$ we have
\[
  \|\mathrm{D}_S[f]\|_2^2=p\| \mathrm{D}_S[f_{i\to
    1}]\|_2^2 + (1-p)\|\mathrm{D}_S[f_{i\to 0}]\|_2^2,
\]
which implies
$\|\mathrm{D}_S[f_{i\to 0}]\|_2 \le \sqrt{2}\|\mathrm{D}_S[f]\|_2$. We
may now apply the triangle inequality on the first
observation and use the second observation to obtain
\[
  \sqrt{\mathrm{I}_S[f]} = \| \mathrm{D}_S[f_{i\to 1}]\|_2 \le
  \|\mathrm{D}_{S\cup \{ i\}}[f]\|_2 + \|\mathrm{D}_{S}[f_{i\to
    0}]\|_2\le \sqrt{\delta} + \sqrt{2} \|\mathrm{D}_S[f]\|_2\le 2\sqrt{\delta}.
\]

Taking squares, we obtain the desired upper bound on the generalised
influences of $f_{i\to 1}$.
\end{proof}

\section{Total influence of global functions}
\label{sec:noise+sharp}

In this section we show that our hypercontractive inequality
(Theorem \ref{thm:Hypercontractivity})
implies our stability results for the isoperimetric inequality, namely
Theorems \ref{thm:Bourgain+} and \ref{thm:Variant of Kahn kalai}. We also deduce our first sharp threshold result,  Theorem
\ref{thm:Sharp threshold result}.

\subsection{The spectrum of sparse global sets}\label{spectrum}

The key step in the proofs of Theorems \ref{thm:Variant of Kahn
  kalai} and \ref{thm:Noise sensitivity} is to show that the Fourier
spectrum of global sparse subsets of the $p$-biased cube
is concentrated on the high
degrees. We recall first a proof that in the uniform cube
(i.e.\ cube with uniform measure), \emph{all} sparse sets
have this behaviour (not just the global ones).
Our proof is based on ideas from Talagrand
\cite{talagrand1994approximate} and Bourgain and Kalai
\cite{BourgainKalai19}.

\begin{thm} \label{thm:warm-up}
Let $f$ be a Boolean function on the uniform cube, and let $r>0$. Then
\[
  \left\|f^{\le r} \right\|_2^2\le 3^r \mu_{1/2}\left(f\right)^{1.5}.
  \]
\end{thm}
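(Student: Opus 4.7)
\medskip

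\noindent\textbf{Proof plan for Theorem \ref{thm:warm-up}.}

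The plan is to combine a H\"older-type pairing trick with the classical Bonami--Beckner hypercontractivity $\|\mathrm{T}_{1/\sqrt{3}}h\|_4\le \|h\|_2$ on the uniform cube, following the idea behind the Bourgain--Kalai and Talagrand level-$r$ inequalities. The only features of $f$ I will use are that it is Boolean (so $\|f\|_{4/3}^{4/3}=\mu_{1/2}(f)$) and that $f^{\le r}$ has degree at most $r$.

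First I would set $g = f^{\le r}$ and use the self-adjointness of the truncation (Plancherel) to write
\[
  \|g\|_2^2 = \langle f, g\rangle.
\]
By H\"older's inequality applied with exponents $4/3$ and $4$,
\[
  \langle f,g\rangle \le \|f\|_{4/3}\,\|g\|_4 = \mu_{1/2}(f)^{3/4}\,\|g\|_4,
\]
where the equality uses that $f$ is $\{0,1\}$-valued.

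Next I would bound $\|g\|_4$ using hypercontractivity. The key trick is to realise $g$ as a noise-smoothing of a degree-$r$ function with the same support: define
\[
  h \;=\; \sum_{|S|\le r} 3^{|S|/2}\,\hat f(S)\,\chi_S,
\]
so that $\mathrm{T}_{1/\sqrt{3}}h = g$. The Bonami--Beckner inequality (for the uniform cube, at the critical parameter $1/\sqrt{3}$) then gives
\[
  \|g\|_4 = \|\mathrm{T}_{1/\sqrt{3}}h\|_4 \le \|h\|_2,
\]
and Parseval yields
\[
  \|h\|_2^2 = \sum_{|S|\le r} 3^{|S|}\,\hat f(S)^2 \le 3^r \|g\|_2^2.
\]
Combining, $\|g\|_4 \le 3^{r/2}\|g\|_2$.

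Plugging this into the H\"older bound gives
\[
  \|g\|_2^2 \le \mu_{1/2}(f)^{3/4}\cdot 3^{r/2}\,\|g\|_2,
\]
so that $\|g\|_2 \le 3^{r/2}\mu_{1/2}(f)^{3/4}$, and squaring gives the claim. There is no serious obstacle here; the only thing to get right is the choice of the weights $3^{|S|/2}$ so that $\mathrm{T}_{1/\sqrt{3}}$ converts $h$ back into $g$, which is what makes the factor $3^r$ (rather than something larger) appear. The proof of the $p$-biased global analogue in later sections will require replacing Bonami--Beckner with Theorem \ref{thm:Hypercontractivity} and hence needs control on generalised influences of $h$, but in the uniform setting no such assumption is necessary.
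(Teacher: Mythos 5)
Your proof is correct and follows essentially the same route as the paper: both write $\|f^{\le r}\|_2^2 = \langle f, f^{\le r}\rangle$, apply H\"older with exponents $4/3$ and $4$, bound the $4$-norm of the degree-$r$ part via Bonami--Beckner after pulling back through $\mathrm{T}_{1/\sqrt 3}$, and finish with Parseval. The paper merely factors the argument into two standalone lemmas (one for the H\"older pairing valid for $\{-1,0,1\}$-valued $f$ and any $p$, one for the $4$-norm bound in the uniform setting) so that the pairing lemma can be reused verbatim for the $p$-biased global version, but the substance is identical.
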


  The idea of the proof is to  bound $\left\|f^{\le r}
  \right\|_2^2=\left\langle f^{\le r},f\right\rangle$ via H\"older
  by $\left\|f^{\le r}\right\|_4 \left\|f\right\|_{4/3}$,
  bound the $4$-norm via hypercontractivity
  and express the $4/3$-norm in terms of the measure of $f$
  using the assumption that $f$ is Boolean.
  For future reference, we decompose the argument into two lemmas,
  the first of which applies also to the $p$-biased settting
  and the second of which requires hypercontractivity,
  and so is specific to the uniform setting.
  Theorem \ref{thm:warm-up} follows immediately from
  Lemmas \ref{lem:nt} and \ref{lem:upper bound on 4th moment} below.

In the following lemma we consider $\{-1,0,1\}$-valued functions
so that it can be applied to either a Boolean function
or its discrete derivative.

  \begin{lem}\label{lem:nt}
Let $f\colon\power{n}\to\{0,1,-1\}$, let $\mc{F}$ be a family of
subsets of $\left[n\right]$,
and let $g(x) = f^{\mc{F}} = \sum_{S\in\mc{F}}{\hat{f}(S)\chi_S(x)}$.
Then $\|g\|_2^2 \le \|g\|_4 \|f\|_2^{1.5}$, where the norms
can be taken with respect to an arbitrary $p$-biased measure.
\end{lem}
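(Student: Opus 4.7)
The plan is to follow exactly the three-step outline hinted at just before the lemma statement: represent $\|g\|_2^2$ as an inner product against $f$, apply Hölder, and then convert the $L^{4/3}$ norm of $f$ into an $L^2$ norm using the $\{-1,0,1\}$-valued hypothesis. The key is that hypercontractivity plays no role here — this lemma works in any $p$-biased measure.

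First, I would note that since $\{\chi_S\}_{S\subset[n]}$ is an orthonormal basis for $L^2(\{0,1\}^n,\mu_p)$ (as recalled in Section~\ref{sec:not}) and $g=\sum_{S\in\mc{F}}\hat f(S)\chi_S$ is the orthogonal projection of $f$ onto $\mathrm{Span}\{\chi_S : S\in\mc{F}\}$, Plancherel gives
\[
\|g\|_2^2 \;=\; \sum_{S\in\mc{F}}\hat f(S)^2 \;=\; \langle g,f\rangle.
\]
Next, I would apply Hölder's inequality with the conjugate exponents $4$ and $4/3$ to this inner product:
\[
\langle g,f\rangle \;\le\; \|g\|_4\,\|f\|_{4/3}.
\]

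Finally, I would exploit the assumption that $f$ takes values in $\{-1,0,1\}$, so that $|f|^{4/3} = f^2$ pointwise. Therefore
\[
\|f\|_{4/3} \;=\; \bigl(\mathbb{E}[|f|^{4/3}]\bigr)^{3/4} \;=\; \bigl(\mathbb{E}[f^2]\bigr)^{3/4} \;=\; \|f\|_2^{3/2}.
\]
Combining the three displays yields $\|g\|_2^2 \le \|g\|_4\|f\|_2^{1.5}$, as required.

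There is no real obstacle: the argument is a three-line manipulation valid for any product measure, with the $\{-1,0,1\}$-valued assumption serving only to identify $\|f\|_{4/3}^{4/3}$ with $\|f\|_2^2$. The content of the lemma is essentially a clean packaging of the Hölder-duality trick so that later, when $g=f^{\le r}$, one only needs to control $\|f^{\le r}\|_4$ (via hypercontractivity, as in the next lemma) to obtain a sharp bound on the low-degree mass of a Boolean (or derivative-of-Boolean) function.
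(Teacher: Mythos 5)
Your proof is correct and is essentially the same as the paper's: both compute $\|g\|_2^2=\langle g,f\rangle$ via Plancherel, apply Hölder with exponents $4$ and $4/3$, and use the $\{-1,0,1\}$-valued hypothesis to identify $\|f\|_{4/3}$ with $\|f\|_2^{3/2}$.
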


\begin{proof}
By Plancherel and H{\"o}lder's inequality,
$\mb{E}[g^2] = \inner{f}{g}
\le \norm{f}_{4/3}\norm{g}_4$,
where $\norm{f}_{4/3} = \mb{E}[f^2]^{3/4}=\|f\|_2^{1.5}$
as $f$ is $\{-1,0,1\}$-valued.
\end{proof}

  Applying Lemma \ref{lem:nt} with $g=f^{\le r}$, we obtain a lower bound on
  the 4-norm of $g$. We now upper bound it by appealing to the
  Hypercontractivity Theorem.
  \begin{lem}\label{lem:upper bound on 4th moment}
    Let $g$ be a function of degree $r$ on the uniform cube. Then
    $\left\|g\right\|_4 \le \sqrt{3}^{r}\left\|g\right\|_2$.
  \end{lem}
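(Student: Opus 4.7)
The plan is to invoke the classical Bonami--Beckner $(4,2)$-hyper\-contractive inequality on the uniform cube, which has already been cited in the proof of Theorem \ref{thm:hypref} in the form $\|\mathrm{T}_{\rho} h\|_4 \le \|h\|_2$ for all $h \in L^2(\{0,1\}^n, \mu_{1/2})$ and all $\rho \le 1/\sqrt{3}$. The point is that for a degree-$r$ function $g$, one can write $g$ as $\mathrm{T}_{1/\sqrt{3}}$ applied to a suitably rescaled function whose $2$-norm is controlled by $\|g\|_2$.

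Concretely, I would define
\[
h \;=\; \sum_{|S| \le r} \sqrt{3}^{\,|S|}\, \hat g(S)\, \chi_S^{1/2},
\]
so that $\mathrm{T}_{1/\sqrt{3}} h = \sum_{|S| \le r} \hat g(S) \chi_S^{1/2} = g$. Applying the standard hypercontractivity bound with $\rho = 1/\sqrt 3$ then gives
\[
\|g\|_4 \;=\; \|\mathrm{T}_{1/\sqrt 3} h\|_4 \;\le\; \|h\|_2.
\]
The degree assumption on $g$ enters at the last step: by Parseval,
\[
\|h\|_2^2 \;=\; \sum_{|S| \le r} 3^{|S|} \hat g(S)^2 \;\le\; 3^r \sum_{|S| \le r} \hat g(S)^2 \;=\; 3^r \|g\|_2^2,
\]
and taking square roots yields $\|g\|_4 \le \sqrt{3}^{\,r} \|g\|_2$, as required.

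There is really no obstacle here; the statement is a textbook corollary of Bonami--Beckner, obtained by the standard trick of pre-inverting the noise operator on a low-degree function and paying a factor of $3^{r/2}$ in the $2$-norm. The only thing to be careful about is to use the uniform-measure characters $\chi_S^{1/2}$ throughout (which is legitimate since the hypothesis explicitly restricts attention to the uniform cube), so that the classical hypercontractivity theorem applies without modification.
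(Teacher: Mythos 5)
Your proof is correct and matches the paper's argument exactly: both define $h$ by rescaling the Fourier coefficients by $\sqrt{3}^{|S|}$ so that $\mathrm{T}_{1/\sqrt{3}}h = g$, apply the classical Bonami--Beckner $(4,2)$-hypercontractivity on the uniform cube, and conclude via Parseval that $\|h\|_2 \le \sqrt{3}^r \|g\|_2$.
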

  \begin{proof}
    Let $h$ be the function, such that $\mathrm{T}_{1/\sqrt{3}}h=g$,
    i.e.\ $h=\sum_{|S|\le r}\sqrt{3}^{|S|}\hat{g}\left(
      S\right)\chi_S$. Then the Hypercontractivity
    Theorem implies that $\|g\|_4\le \|h\|_2$, and by Parseval
    $\|h\|_2\le \sqrt{3}^r\|g\|_2$.
    \end{proof}

We shall now adapt the proof of Theorem \ref{thm:warm-up} to global
functions on the $p$-biased cube. The only part in the above proof
that needs an adjustment is Lemma \ref{lem:upper bound on 4th
  moment}, and in fact
  we have already provided the required adjustment in Section
\ref{sec:hyp} in the form of Lemma \ref{lem:applying_hypercontractivity}.

\begin{thm} \label{lem:normsense0}
Let $r \ge 1$,
and let  $f\colon\power{n}\to\{0,1,-1\}$. Suppose that
$\mathrm{I}_S[f^{\le r}]\le \delta$ for each set $S$ of size at most $r$.
Then $\mb{E}[(f^{\le r})^2]
\le 5^r \delta^{\frac{1}{3}}  \mb{E}\left[f^2\right]$.
\end{thm}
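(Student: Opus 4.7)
The plan is to chain together the two lemmas already established, namely Lemma \ref{lem:nt} (the Plancherel--H\"older step) and Lemma \ref{lem:applying_hypercontractivity} (the global hypercontractivity step), exactly in the spirit of the uniform-measure warm-up in Theorem \ref{thm:warm-up}. Let $g = f^{\le r}$. The hypotheses on $f$ (being $\{-1,0,1\}$-valued) and on $g$ (being a degree-$r$ function with $\mathrm{I}_S(g) \le \delta$ for all $|S|\le r$) are tailored precisely to the two inputs needed.

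First I would apply Lemma \ref{lem:nt} with this choice of $g$ and with $\mc{F} = \{S : |S|\le r\}$; since $f$ is $\{-1,0,1\}$-valued the lemma gives
\[
\|g\|_2^2 \le \|g\|_4 \, \|f\|_2^{3/2}.
\]
Next I would upper-bound $\|g\|_4$ by invoking Lemma \ref{lem:applying_hypercontractivity} on $g$, whose hypotheses (degree $\le r$, all generalised influences at most $\delta$) are exactly our assumptions on $f^{\le r}$; this yields
\[
\|g\|_4 \le 5^{3r/4}\, \delta^{1/4}\, \|g\|_2^{1/2}.
\]

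Finally I would combine the two inequalities to obtain
\[
\|g\|_2^2 \le 5^{3r/4}\, \delta^{1/4}\, \|g\|_2^{1/2} \, \|f\|_2^{3/2},
\]
i.e.\ $\|g\|_2^{3/2} \le 5^{3r/4}\, \delta^{1/4}\, \|f\|_2^{3/2}$. Raising both sides to the power $4/3$ yields the claimed bound
\[
\mb{E}[(f^{\le r})^2] = \|g\|_2^2 \le 5^r\, \delta^{1/3}\, \mb{E}[f^2].
\]
There is no real obstacle here beyond verifying that the hypotheses of the two auxiliary lemmas match up; the only subtle point worth flagging is that Lemma \ref{lem:nt} requires the outer function (to which H\"older is applied via $\|f\|_{4/3}$) to be $\{-1,0,1\}$-valued, which is why the statement is phrased for such $f$ rather than for Boolean $f$ only --- this flexibility will matter when the theorem is later applied to discrete derivatives of Boolean functions.
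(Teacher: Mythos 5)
Your proof is correct and follows essentially the same route as the paper: apply Lemma \ref{lem:nt} to $g=f^{\le r}$ to get $\|g\|_2^2\le\|g\|_4\|f\|_2^{3/2}$, bound $\|g\|_4$ via Lemma \ref{lem:applying_hypercontractivity}, and rearrange by raising to the power $4/3$. The hypothesis matching you flag is fine, since $g$ has degree $\le r$ so its generalised influences vanish for $|S|>r$ and are controlled by assumption for $|S|\le r$.
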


\begin{proof}
Applying Lemma \ref{lem:applying_hypercontractivity} with $g=f^{\le r}$, we
obtain the upper bound $\|g\|_4\le 5^{\frac{3r}{4}}\delta^{\frac{1}{4}}\|g\|_2^{0.5}$. Since
the function $f$ takes values only in the set $\{0,1,-1\}$,
we may apply Lemma \ref{lem:nt}. Combining it with the upper bound on
the 4-norm of $g$, we obtain
\[
\|g\|_2^2
\le \|g\|_4\|f\|_2^{1.5}\le 5^{\frac{3r}{4}}\delta^{\frac{1}{4}}
\|g\|_2^{0.5}\|f\|_2^{1.5}.
\]
Rearranging, and raising everything to the power $\frac{4}{3}$, we obtain
$\|g\|_2^2\le 5^{r}\delta^{\frac{1}{3}}\left\|f\right\|_2^2$.
\end{proof}

Let us say that $f$ is \emph{$\epsilon$-concentrated} above degree $r$ if
$\|f^{\le r}\|_2^2 \le \epsilon \|f\|_2^2$. The significance of
Theorem \ref{lem:normsense0} stems from the fact that it implies
the following result showing that for each $r,\epsilon>0$
there exists a $\delta>0$ such that any
sparse $(r,\delta)$-global function
is $\epsilon$-concentrated above degree $r$.

\begin{cor} \label{lem:normsense}
Let $r \ge 1$.
Suppose that  $f$ is an
$\left(r,\delta\right)$-global Boolean function
with $\mu_p\left( f\right)<\delta$.
Then $\mb{E}[(f^{\le r})^2]
\le 10^{r} \delta^{\frac{1}{3}}\mu_p(f)$.
\end{cor}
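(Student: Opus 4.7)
The plan is to combine the two main tools already established in this section: Lemma \ref{lem: gen influence control for global functions}, which translates measure-globalness into a bound on generalised influences, and Theorem \ref{lem:normsense0}, which converts such a bound into concentration on low degrees.

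First I would invoke Lemma \ref{lem: gen influence control for global functions} with the hypotheses of the corollary (the function is $(r,\delta)$-global, Boolean, and satisfies $\mu_p(f) \le \delta$) to conclude that $\mathrm{I}_S(f^{\le r}) \le \mathrm{I}_S(f) \le 8^r \delta$ for every set $S$ with $|S| \le r$. This is the key input that lets us quantify the globalness hypothesis in Fourier-analytic terms.

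Next I would apply Theorem \ref{lem:normsense0} to $f$, using $8^r\delta$ in place of $\delta$. Since $f$ takes values in $\{0,1\} \subset \{-1,0,1\}$, the hypothesis of that theorem is satisfied, and we obtain
\[
\mathbb{E}\!\left[(f^{\le r})^2\right] \le 5^r (8^r \delta)^{1/3} \mathbb{E}[f^2] = 5^r \cdot 2^r \delta^{1/3}\, \mathbb{E}[f^2] = 10^r \delta^{1/3}\, \mathbb{E}[f^2].
\]
Finally, since $f$ is $\{0,1\}$-valued, $\mathbb{E}[f^2] = \mu_p(f)$, giving the desired bound $\mathbb{E}[(f^{\le r})^2] \le 10^r \delta^{1/3} \mu_p(f)$.

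There is essentially no obstacle here: all of the analytic work has already been packaged into the two earlier results, and this corollary is a clean concatenation of them, with the only bookkeeping being the arithmetic $5^r (8^r)^{1/3} = 10^r$ and the replacement of $\|f\|_2^2$ by $\mu_p(f)$ using the Boolean-valuedness of $f$.
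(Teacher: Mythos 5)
Your proof is correct and follows exactly the same route as the paper: apply Lemma \ref{lem: gen influence control for global functions} to obtain $\mathrm{I}_S(f^{\le r}) \le 8^r\delta$ for all $|S| \le r$, then feed this into Theorem \ref{lem:normsense0} and simplify $5^r(8^r)^{1/3} = 10^r$, using Boolean-valuedness to convert $\|f\|_2^2$ to $\mu_p(f)$. No differences worth noting.
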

\begin{proof}
  By Lemma \ref{lem: gen influence
    control for global functions}, for each $S$ of size $r$ we have
  $ \mathrm{I}_S\left(f^{\le r}\right)\le\mathrm{I}_S\left(f\right)< 8^r \delta$.
  Then Theorem \ref{lem:normsense0} implies
  $\|f^{\le r}\|_2^2\le 10^r\delta^{\frac{1}{3}} \|f\|_2^2$,
  where since $f$ is Boolean
  we have $\|f\|_2^2 = \mu_p(f)$.
\end{proof}

\subsection{Isoperimetric stability}

We are now ready to prove our variant of the Kahn-Kalai Conjecture
and sharp form of Bourgain's Theorem, both of which can be thought
of as isoperimetric stability results. Both proofs closely follow
existing proofs and substitute our new hypercontractivity inequality
for the standard hypercontractivity theorem:
for the first we follow a proof of the isoperimetric inequality,
and for the second the proof of KKL given
by Bourgain and Kalai \cite{BourgainKalai19}
(their main idea is to apply the argument we gave in Theorem
\ref{thm:warm-up} for each of the derivatives of $f$).

\begin{proof}[Proof of Theorem \ref{thm:Variant of Kahn kalai}]
 We prove the contrapositive statement that for
 a sufficiently large absolute constant $C$, if $f$ is a Boolean function
 such that $\mu_p(f_{J\to 1})\le e^{-{CK}}$  for all $J$ of size at most $CK$, then $p\mathrm{I}[f]>K\mu_p(f)$.
 Let $f$ be such a function, and set $\delta=e^{-CK}$.
 Provided that $C>2$, $f$ is  $\left(2K,\delta\right)$-global, and has
 $p$-biased measure at most $\delta$.
 By Corollary \ref{lem:normsense}, we have
\[
\|f^{\le 2K}\|_2^2 \le 10^{2K}\delta^{\frac{1}{3}}\mu_p\left(f\right)\leq \mu_p\left(f\right)/2,
\]
provided that $C$ is sufficiently large. Hence,
\[
  \|f^{>2K}\|_2^2 = \|f\|_2^2-\|f^{\le 2K}\|_2^2
  \geq
  \mu_p\left(f\right)/2.
  \]
By \eqref{eq:inf} we obtain $p(1-p)\mathrm{I}[f]\ge
2K\|f^{>2K}\|_2^2$, so $p\mathrm{I}[f]> K\mu_p(f)$.
\end{proof}

Next we require the following lemma which bounds the norm of
a low degree truncation in terms of the total influence.
\begin{lem} \label{lem:normtruncate}
  Let $r \ge 0$. Suppose that for each nonempty
  set $S$ of size
  at most $r$, $\mathrm{I}_S\left(f^{\le r}\right)\le \delta.$ Then
  \[
    \|f^{\le r}\|_2^2\le \mu_p(f)^2 + 5^{r-1}\delta^{\frac{1}{3}}\sigma^2\mathrm{I}[f].
    \]
\end{lem}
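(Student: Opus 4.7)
The plan is to reduce the claim, via Parseval, to bounding the total influence of the truncation $g := f^{\le r}$, and then to prove this bound by a coordinatewise application of Theorem \ref{lem:normsense0}.

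First I would observe that
\[\|g\|_2^2 = \hat g(\emptyset)^2 + \sum_{|S|\ge 1}\hat g(S)^2 = \mu_p(f)^2 + \sum_{|S|\ge 1}\hat g(S)^2,\]
and that
\[\sum_{|S|\ge 1}\hat g(S)^2 \le \sum_S |S|\,\hat g(S)^2 = \sigma^2\,\mathrm{I}(g),\]
by the Fourier expression \eqref{eq:inf} for total influence. Hence it suffices to prove $\mathrm{I}(g)\le 5^{r-1}\delta^{1/3}\,\mathrm{I}(f)$.

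For this I would apply Theorem \ref{lem:normsense0} at degree $r-1$ to the discrete derivative $h_i := f_{i\to 1}-f_{i\to 0}$ for each $i\in[n]$. This function is $\{-1,0,1\}$-valued, as required, and its Fourier expansion is $\hat h_i(T)=\sigma^{-1}\hat f(T\cup\{i\})$ for $i\notin T$ (and zero otherwise); thus $\|h_i\|_2^2 = \mathrm{I}_i(f)$, and since $h_i^{\le r-1}=\sigma^{-1}\mathrm{D}_i g$, also $\|h_i^{\le r-1}\|_2^2 = \mathrm{I}_i(g)$. The same bookkeeping yields, for every $T\sub[n]\sm\{i\}$ with $|T|\le r-1$, the pleasant identity
\[\mathrm{I}_T\!\left(h_i^{\le r-1}\right) = \mathrm{I}_{T\cup\{i\}}(g) \le \delta,\]
where the inequality uses the hypothesis applied to the nonempty set $T\cup\{i\}$ of size at most $r$ (this also covers the case $T=\emptyset$). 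Theorem \ref{lem:normsense0} then delivers $\mathrm{I}_i(g)\le 5^{r-1}\delta^{1/3}\,\mathrm{I}_i(f)$, and summing over $i$ completes the argument.

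The only mildly delicate point will be to keep the $\sigma$-normalisations straight when passing between the combinatorial derivative $h_i$ and the analytic derivative $\mathrm{D}_i f = \sigma h_i$ used in the definition of generalised influences, so that exactly one factor of $\sigma^2$ appears on the right-hand side. No new hypercontractive input is needed beyond Theorem \ref{lem:normsense0}, which already packages the relevant consequence of Theorem \ref{thm:Hypercontractivity}.
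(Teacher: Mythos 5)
Your proof is correct and is essentially the same as the paper's: both decompose the mass of $\hat f(S)^2$ over $|S|\le r$ via $|S|$ discrete derivatives $g_i = f_{i\to 1}-f_{i\to 0}$, apply Theorem \ref{lem:normsense0} at degree $r-1$ to each $g_i$ (using the same bookkeeping identity $\mathrm{I}_T\bigl(g_i^{\le r-1}\bigr)=\mathrm{I}_{T\cup\{i\}}(f^{\le r})$), and reassemble via Parseval; your intermediate step through $\mathrm{I}(g)$ is just a cosmetic regrouping of the paper's direct Fourier sum.
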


\begin{proof}
Let $g_i := f_{i \to 1} - f_{i \to 0}$.
Then for each $S$ of size at most $r-1$, with $i\notin S$ we have
\[
  \mathrm{I}_S(g_{i}^{\le r})=\mathrm{I}_{S\cup\{i\}}(f^{\le r}) \le
  \delta,
\]
and for each $S$ containing $i$ we have $\mathrm{I}_S((g_i)^{\le r})=0$.
By Lemma \ref{lem:normsense0},
$\mb{E}[((g_i)^{\le r})^2]
\le 5^{r-1}\delta^{\frac{1}{3}}  \mb{E}[g_i^2]$.
The lemma now follows by summing over all $i$, using $\sum_i \mb{E}[g_i^2] = \mathrm{I}(f)$:
\begin{align*}
\| f^{\le r} \|_2^2
& = \sum_{|S| \le r} \hat{f}(S)^2
\le \hat{f}(\es)^2 + \sum_{|S| \le r} |S|\hat{f}(S)^2 \\
& = \mu_p(f)^2 + \sS^2 \sum_i \mb{E}[ ((g_i)^{\le r})^2 ]
\le \mu_p(f)^2 + 5^{r-1} \dD^{1/3} \sS^2 \mathrm{I}(f). & \qedhere
\end{align*}
\end{proof}

We now establish a variant of Bourgain's Theorem
for general Boolean functions, in which we replace
the conclusion on the measure of a restriction
by finding a large generalised influence.

\begin{thm}\label{thm:Bourgain++}
 Let $f\colon\{0,1\}^n\to\{0,1\}.$ Suppose that
$p\mathrm{I}[f] \le
  K \mu_p\left(f \right)(1-\mu_p(f))$. Then there exists
  an $S$ of size $2K$, such that $\mathrm{I}_S(f)\ge 5^{-8K}.$
\end{thm}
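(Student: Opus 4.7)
The plan is to argue by contradiction. Suppose that $\mathrm{I}_S(f) < 5^{-8K}$ for every nonempty $S\subseteq [n]$ of size at most $2K$. By Lemma \ref{obs}, the bound passes to the truncation: $\mathrm{I}_S(f^{\le 2K}) \le \mathrm{I}_S(f) < 5^{-8K}$. Hence Lemma \ref{lem:normtruncate} applies with $r=2K$ and $\delta = 5^{-8K}$, and since $5^{r-1}\delta^{1/3} = 5^{2K-1}\cdot 5^{-8K/3} = 5^{-2K/3-1}$, it yields the upper bound
\[
\|f^{\le 2K}\|_2^2 \;\le\; \mu_p(f)^2 + 5^{-2K/3-1}\,\sigma^2\,\mathrm{I}[f].
\]
Using $\sigma^2=p(1-p)$ together with the hypothesis $p\mathrm{I}[f]\le K\mu_p(f)(1-\mu_p(f))$, we have $\sigma^2\mathrm{I}[f]\le K\mu_p(f)(1-\mu_p(f))$.

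Next I would produce a matching lower bound on $\|f^{\le 2K}\|_2^2$. Since $f$ is Boolean, $\|f\|_2^2=\mu_p(f)$. The Fourier expression \eqref{eq:inf} for the total influence gives $p(1-p)\mathrm{I}[f]=\sum_S |S|\hat f(S)^2 \ge 2K\,\|f^{>2K}\|_2^2$, which combined with the hypothesis implies $\|f^{>2K}\|_2^2 \le \tfrac{1}{2}\mu_p(f)(1-\mu_p(f))$. Therefore
\[
\|f^{\le 2K}\|_2^2 \;=\; \mu_p(f) - \|f^{>2K}\|_2^2 \;\ge\; \tfrac{1}{2}\mu_p(f)(1+\mu_p(f)).
\]
Combining the upper and lower bounds and cancelling the common $\mu_p(f)^2$ term gives
\[
\tfrac{1}{2}\mu_p(f)(1-\mu_p(f)) \;\le\; 5^{-2K/3-1}\,K\,\mu_p(f)(1-\mu_p(f)).
\]
Assuming $0<\mu_p(f)<1$ (the constant case is degenerate and can be dismissed directly from the hypothesis), dividing by $\mu_p(f)(1-\mu_p(f))$ yields $5\cdot 5^{2K/3}\le 2K$, which fails for every $K\ge 1$, contradicting the assumption.

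The only real subtlety is the exponent bookkeeping: the choice $\delta = 5^{-8K}$ is calibrated precisely so that $5^{r-1}\delta^{1/3}$ becomes a fast-decaying $5^{-2K/3-1}$, overwhelming the linear factor $K$ coming from the total-influence hypothesis. All remaining manipulations are routine; the essential content has been prepackaged in Lemmas \ref{obs} and \ref{lem:normtruncate}, which in turn depend on the hypercontractivity for low-degree global functions (Lemma \ref{lem:applying_hypercontractivity}). No genuinely new ideas beyond those already deployed in the proof of Theorem \ref{thm:Variant of Kahn kalai} are required.
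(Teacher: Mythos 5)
Your proof is correct and follows the paper's own argument essentially step for step: apply Lemma \ref{lem:normtruncate} with $r=2K$, $\delta=5^{-8K}$ to upper-bound $\|f^{\le 2K}\|_2^2 - \mu_p(f)^2$, lower-bound the same quantity using $\|f^{>2K}\|_2^2 \le \tfrac{1}{2K}p(1-p)\mathrm{I}[f] \le \tfrac12\mu_p(f)(1-\mu_p(f))$ together with $\|f\|_2^2=\mu_p(f)$, and derive the contradiction $5\cdot 5^{2K/3}\le 2K$. The bookkeeping checks out, and your explicit citation of Lemma \ref{obs} to transfer the generalised-influence bound from $f$ to $f^{\le 2K}$ is a small exposition improvement over the paper, which uses this silently.
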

\begin{proof}
Let $r = 2K$ and let
$\dD = 5^{-8K}$.
Suppose for contradiction that $\mathrm{I}_S(f)\le\delta$ for each
set $S$ of size at most $r$. By Lemma \ref{lem:normtruncate},
\[
  \| f^{\le r} \|_2^2- \mu_p(f)^2
\le 5^{r-1} \dD^{1/3} \sS^2 I(f)
< p\mathrm{I}[f]/2K\le \mu_p(f)(1-\mu_p(f))/2.
\]

On the other hand, by Parseval \[
  \| f - f^{\le r} \|_2^2
= \sum_{|S| \ge r} \hat{f}(S)^2
\le r^{-1} \sum_{|S| \ge r} |S| \hat{f}(S)^2
\le r^{-1} p(1-p) \mathrm{I}(f)\le
\mu_p(f)(1-\mu_p(f))/2.
\]
However, these bounds contradict the fact that
\begin{align*}
  \mu_p(f)(1-\mu_p(f))=\|f\|_2^2 -\mu_p(f)^2= \| f^{\le r} \|_2^2-
  \mu_p(f)^2 + \| f - f^{\le r} \|_2^2. & \qedhere
\end{align*}
 \end{proof}

\begin{proof}[Proof of Theorem \ref{thm:Bourgain+}]
The theorem follows immediately from Theorem \ref{thm:Bourgain++} and
Lemma \ref{rem}.
\end{proof}

\subsection{Sharpness examples}
We now give two examples showing
sharpness of the theorems in this section,
both based on the tribes function of Ben-Or
and Linial \cite{ben1990collective}.

\begin{example} \label{eg1}
We consider the anti-tribes function $f=f_{s,w}:\{0,1\}^n \to \{0,1\}$
defined by $s$ disjoint sets $T_1,\dots,T_s \sub [n]$ each of size $w$,
where $f(x) = \prod_{j=1}^s \max_{i \in T_j} x_i$,
i.e.\ $f(x)=1$ if for every $j$ we have $x_i=1$ for some $i \in T_j$,
otherwise $f(x)=0$. We have $\mu_p(f) = (1-(1-p)^w)^s$
and $\mathrm{I}[f] = \mu_p(f)' = sw(1-p)^{w-1} (1-(1-p)^w)^{s-1}$.
We choose $s,w$ with $s(1-p)^w = 1$
(ignoring the rounding to integers) so that
$\mu_p(f)=(1-s^{-1})^s$ is bounded away from $0$ and $1$,
and $K = (1-p)p\mathrm{I}[f] = pw(1-s^{-1})^{-1} \mu_p(f) = \Tt(pw)$.
Thus $\log s = w\log (1-p)^{-1} = \Tt(K)$.
However, for any $J \sub [n]$ with $|J| = t \le s$ we have
$\mu_p(f_{J\to1}) \le (1-s^{-1})^{s-t} \le 2^{t/s} \mu_p(f)$,
so to obtain a density bump of $e^{-o(K)}$
we need $t = e^{-o(K)} s = e^{\Oo(K)} \gg K$.
Thus Theorem \ref{thm:Bourgain+} is sharp.
\end{example}

\begin{example} \label{eg2}
Let $f(x) = f_{s,w}(x) \prod_{i \in T} x_i$
with $f_{s,w}$ as in Example \ref{eg1}
and $T \sub [n]$ a set of size $t$ disjoint from $\cup_j T_j$.
We have $\mu_p(f) = p^t (1-(1-p)^w)^s$ and
$I[f] = \mu_p(f)' = tp^{t-1} (1-(1-p)^w)^s
+ p^t sw(1-p)^{w-1} (1-(1-p)^w)^{s-1}$.
We fix $K>1$ and choose $s,w$ with $s(1-p)^w = K$, so that
$\mu_p(f) = p^t (1-K/s)^s = p^t e^{-\Tt(K)}$ for $s>2K$
and $p(1-p)I[f] = \mu_p(f) ( (1-p)t + pw K (1-K/s)^{-1} )
= \mu_p(f) \Tt(K) $ if $pw = \Tt(1)$ and $t=O(K)$.
For any $J \sub [n]$ with $|J|=t+u \le t+s$
we have  $\mu_p(f_{J\to1}) \le (1-K/s)^{s-u}
\le e^{-K(1-u/s)} \le e^{-K/2}$ unless $u > s/2 = \Tt(K)$.
Thus Theorem \ref{thm:Variant of Kahn kalai} is sharp.
\end{example}

\subsection{Sharp thresholds: the traditional approach}\label{subsec:ST2}
In this section we deduce Theorem \ref{thm:Sharp threshold result}
from our edge-isoperimetric stability results
and the Margulis--Russo Lemma.
Recall that a monotone Boolean function
is $M$-global in an interval if
$\mu_p\left(f_{J\to 1}\right)\le \mu_p\left(f\right)^{0.01}$
for each $p$ in the interval and set $J$ of size $M$.
We prove the following slightly stronger version of Theorem
\ref{thm:Sharp threshold result}.
\begin{thm} \label{thm:trad}
 There exists an absolute constant $C$ such that
 the following holds for any monotone Boolean function $f$
 that is $M$-global in some interval  $\left[p,q\right]$:
 if $q\le p_c$ and  $\mu_p\left(f\right)\ge e^{-M/C}$ then
  \begin{equation} \label{eq:trad}
    \mu_q\left(f \right) \ge \mu_p(f)^{\left(\frac{p}{q}\right)^{1/C}}.
  \end{equation}
 In particular, $q\le M^{C} p$.
\end{thm}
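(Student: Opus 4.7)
The plan is to convert the statement into a differential inequality for $\phi(p') := \log(1/\mu_{p'}(f))$ on the interval $[p,q]$, using the Margulis--Russo formula together with the contrapositive of Theorem \ref{thm:Variant of Kahn kalai}, and then integrate. This carries out the third step of the Kahn--Kalai strategy sketched in Section \ref{sec:kahn kalai}, now that an edge-isoperimetric stability result sharp enough for the job (namely Theorem \ref{thm:Variant of Kahn kalai}) is available.

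First I would apply Margulis--Russo to get $\frac{d}{dp'}\mu_{p'}(f) = \mathrm{I}_{\mu_{p'}}(f)$, so that $-\phi'(p') = \mathrm{I}_{\mu_{p'}}(f)/\mu_{p'}(f)$. Let $C_0$ denote the absolute constant of Theorem \ref{thm:Variant of Kahn kalai}, and for $p'\in[p,q]$ choose $K = K(p') := \phi(p')/(200 C_0)$. Then $M$-globalness on $[p,q]$ gives, for any $J$ with $|J|\le C_0 K \le M$, the bound $\mu_{p'}(f_{J\to 1})\le \mu_{p'}(f)^{0.01} = e^{-\phi(p')/100} < e^{-C_0 K}$; the contrapositive of Theorem \ref{thm:Variant of Kahn kalai} therefore yields $p'\,\mathrm{I}[f] \ge K\mu_{p'}(f)$, i.e., $-\phi'(p') \ge \phi(p')/(200 C_0 p')$.

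Rewriting as $-d\log\phi(p')/dp' \ge 1/(200 C_0 p')$ and integrating from $p$ to $q$, I obtain $\phi(q)\le \phi(p)(p/q)^{1/(200 C_0)}$, which on exponentiation is \eqref{eq:trad} with $C = 200 C_0$. For the ``in particular'' claim: when $q < p_c$ we have $\mu_q(f) < 1/2$ by definition of $p_c$, so $\phi(q) \ge \log 2$; combined with \eqref{eq:trad} and the hypothesis $\phi(p)\le M/C$ this gives $q/p \le (\phi(p)/\log 2)^C \le (M/(C\log 2))^C \le M^C$ provided $C\ge 1/\log 2$. The boundary case $q = p_c$ then follows by letting $q'\nearrow p_c$.

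The main obstacle is validating the ODE throughout $[p,q]$. The choice $K(p')\ge 1$ requires $\phi(p')\ge 200 C_0$, which may fail on a terminal subinterval $[p^*,q]$ where $\mu_{p'}(f) > e^{-200 C_0}$. In that regime $\mu$ is bounded below by a fixed positive constant, so one either absorbs the bounded contribution from $[p^*,q]$ into the constant $C$, or applies Theorem \ref{thm:Variant of Kahn kalai} with $K=1$ until the threshold $\phi=\log 2$ is crossed and then terminates. Either adjustment inflates the exponent in \eqref{eq:trad} only by an absolute constant factor, leaving the conclusion intact.
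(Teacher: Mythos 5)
Your argument is the paper's argument: both pass from Theorem \ref{thm:Variant of Kahn kalai} via Margulis--Russo to the differential inequality $-\frac{d}{dp'}\log\phi(p')\ge \frac{1}{Cp'}$ for $\phi(p')=\log(1/\mu_{p'}(f))$, integrate over $[p,q]$, and read off \eqref{eq:trad} and then the ``in particular'' clause exactly as in the paper's proof. The computation in your second paragraph (choice of $K(p')$, verification that $C_0K\le M$ via $\phi(p')\le\phi(p)\le M/C$, and that $M$-globalness yields $\mu_{p'}(f_{J\to1})<e^{-C_0K}$) is correct and matches the paper.

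The subtlety you flag in the last paragraph --- that Theorem \ref{thm:Variant of Kahn kalai} requires $K\ge1$, so the differential inequality is only justified on the subinterval where $\phi(p')\ge 200C_0$ --- is a genuine one, and in fact the paper's own write-up simply asserts the bound $\mathrm{I}_x[f]\ge \mu_x(f)\log(1/\mu_x(f))/(Cx)$ for all $x\in[p,q]$ without addressing it. Be aware, however, that neither of your two suggested repairs actually closes it. For (ii): applying Theorem \ref{thm:Variant of Kahn kalai} with $K=1$ needs $\mu_{p'}(f_{J\to1})<e^{-C_0}$ for all $|J|\le C_0$, whereas $M$-globalness only gives $\mu_{p'}(f_{J\to1})\le\mu_{p'}(f)^{0.01}$, and this upper bound exceeds $e^{-C_0}$ precisely in the problematic regime $\mu_{p'}(f)>e^{-100C_0}$; so you cannot feed $M$-globalness into the $K=1$ contrapositive there. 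For (i): on the terminal subinterval $[p^*,q]$ the only bound available is the raw $p$-biased isoperimetric inequality $\mathrm{I}_x[f]\ge \mu_x(f)\phi(x)/(x\log(1/x))$, and integrating it gives $\log(1/p^*)\le \Theta(1)\cdot\log(1/q)$, hence $q/p^*\le (1/q)^{\Theta(1)}$; this is not bounded by an absolute constant, so the contribution of $[p^*,q]$ cannot simply be absorbed into $C$. So you have correctly located a soft spot, but a complete treatment of the range $\mu_{p'}(f)\in(e^{-200C_0},1/2]$ is still outstanding in both your write-up and the paper's.
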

\begin{proof}
 By Theorem \ref{thm:Variant of Kahn kalai}, since $f$ is $M$-global throughout the interval, there exists a constant
 $C$ such that  $\mathrm{I}_{x}\left[f\right]
 \ge \frac{\mu_{x}(f) \log(\frac{1}{\mu_{x}(f)})}{Cx}$
 for all $x$ in
 the interval $\left[p,q\right]$.
 By the Margulis-Russo lemma,
  \[
   \frac{d}{dx} \log \left(-\log
   (\mu_{x}\left(f\right))\right)
   =\frac{\mu_{x}(f)'}{\mu_{x}(f)\log(\mu_{x}\left(f\right))}
   =\frac{I_{x}[f]}{\mu_{x}(f)\log(\mu_{x}\left(f\right))}
   \le \frac{-1}{Cx}
 \]
 in all of the interval $\left[p,q\right]$. Hence,
\[
 \log\left(-\log
 (\mu_q(f))\right)\le \log(-\log(\mu_p(f))) - \frac{\log(\frac{q}{p})}{C}.
\]
The first part of the theorem follows by taking exponentials,
multiplying by $-1$ then taking exponentials again.
To see the final statement, note that $q \le p_c$
implies $\mu_q\left(f\right)\le\frac{1}{2}$.
We cannot have $q \ge M^c p$,
as then the right hand side in \eqref{eq:trad}
would be larger than $e^{-\frac{1}{C}} > 1/2$ for large $C$.
To obtain Theorem \ref{thm:Sharp threshold result}
we substitute $q=p_c$.
  \end{proof}

\section{Noise sensitivity and sharp thresholds} \label{sec:noise}

We start this section by showing that sparse global functions
are noise sensitive; Theorem \ref{thm:Noise sensitivity}
follows immediately from
Theorem \ref{thm:quantitative noise sensitivity}.

\begin{thm}\label{thm:quantitative noise sensitivity}
 Let $\rho\in\left(0,1\right)$, and let $\epsilon>0$. Let
 $r=\frac{\log(2/\epsilon)}{\log(1/\rho)}$,
 and let $\delta = 10^{-3r-1}\epsilon^3$.
 Suppose that $f$ is an
  $\left(r,\delta\right)$-global Boolean
  function with $\mu_p\left(f\right)<\delta$.  Then
  \[
    \mathrm{Stab}_\rho\left(f\right)\le \epsilon \mu_p\left(f\right).
    \]
  \end{thm}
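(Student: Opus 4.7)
The plan is to split the noise stability by degree and bound each piece separately:
\[
\mathrm{Stab}_\rho(f) = \sum_S \rho^{|S|} \hat{f}(S)^2
= \sum_{|S|\le r} \rho^{|S|} \hat{f}(S)^2
+ \sum_{|S|>r} \rho^{|S|} \hat{f}(S)^2.
\]
The high-degree tail is easy: since $\rho^{|S|}$ is decreasing in $|S|$, bound the second sum by $\rho^{r} \|f\|_2^2 = \rho^r \mu_p(f)$. The choice $r = \log(2/\epsilon)/\log(1/\rho)$ is precisely what makes $\rho^r = \epsilon/2$, giving a contribution of at most $(\epsilon/2)\mu_p(f)$.

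For the low-degree part, drop the $\rho^{|S|}$ factors to get $\sum_{|S|\le r} \hat f(S)^2 = \|f^{\le r}\|_2^2$. This is exactly the quantity controlled by Corollary~\ref{lem:normsense}: the hypotheses that $f$ is $(r,\delta)$-global and $\mu_p(f)<\delta$ yield
\[
\|f^{\le r}\|_2^2 \le 10^{r}\delta^{1/3}\mu_p(f).
\]
The definition $\delta = 10^{-3r-1}\epsilon^3$ is calibrated so that $10^r\delta^{1/3} = 10^{r}\cdot 10^{-r-1/3}\epsilon = \epsilon/10^{1/3} \le \epsilon/2$, so the low-degree contribution is also at most $(\epsilon/2)\mu_p(f)$. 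Summing the two bounds completes the proof.

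There is really no obstacle here beyond bookkeeping: both inputs (the spectral concentration statement for sparse global functions and the trivial tail bound) are already available, and the parameter $r$ has been chosen so that the tail bound $\rho^r$ matches the desired $\epsilon/2$. The only step that required serious work, namely Corollary~\ref{lem:normsense}, was the application of the new hypercontractive inequality in Section~\ref{spectrum}; given that result, the noise sensitivity theorem reduces to a one-line degree decomposition.
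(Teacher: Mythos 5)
Your proof is correct and follows exactly the same route as the paper: decompose $\mathrm{Stab}_\rho(f)=\sum_S\rho^{|S|}\hat f(S)^2$ at the degree-$r$ threshold, bound the tail by $\rho^r\mu_p(f)=(\eps/2)\mu_p(f)$ using the definition of $r$, and bound the low-degree mass $\|f^{\le r}\|_2^2$ via Corollary~\ref{lem:normsense} with the stated $\delta$ giving $10^r\delta^{1/3}=10^{-1/3}\eps<\eps/2$. The arithmetic checks out and nothing is missing.
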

\begin{proof}
We have
\begin{align*}
\left\langle \mathrm{T}_{\rho}f,f\right\rangle
& \le \sum_{\left|S\right|\le r}\hat{f}\left(S\right)^{2}
+ \rho^{r}\sum_{\left|S\right|>r}\hat{f}\left(S\right)^{2}
\le \mathbb{E}\left[\left(f^{\le r}\right)^{2}\right]
+ \frac{\eps}{2}\mu_{p}(f).
\end{align*}
The statement now follows from Corollary \ref{lem:normsense},
which gives $\mb{E}[(f^{\le r})^2]
\le 10^{r} \delta^{1/3} \mb{E}[f^2]
< \eps \mu_p(f)/2$.
\end{proof}

In the remainder of this section,
following \cite{lifshitz2018hypergraph},
we deduce sharp thresholds from noise sensitivity
via the following \emph{directed noise operator},
which is implicit in the work of
Ahlberg, Broman, Griffiths and Morris \cite{ahlberg2014noise}
and later studied in its own right by
Abdullah and Venkatasubramanian \cite{abdullah2015directed}.
\begin{defn}
Let $D\left(p,q\right)$
denote the unique distribution on pairs
$\left(\boldsymbol{x,y}\right)\in
\left\{ 0,1\right\} ^{n}\times\left\{ 0,1\right\} ^{n}$
such that $\xb\sim\mu_{p}$, $\yb\sim\mu_{q}$,
all $\xb_{i}\le\yb_{i}$ and
$\{ (\xb_{i},\yb_{i}) : i \in [n]\}$ are independent.
We define a linear operator
$\mathrm{T}^{p \to q}: L^2(\{0,1\}^n, \mu _p) \to  L^2(\{0,1\}^n, \mu _q)$ by
\[ \mathrm{T}^{p\to q}\left(f\right)\left(y\right)
=\mathbb{E}_{(\xb,\yb)\sim D\left(p,q\right)}
\left[f\left(\xb\right)|\,\yb=y\right]. \]
\end{defn}

The directed noise operator $\mathrm{T}^{p \to q}$
is a version of the noise operator
where bits can be flipped only from $0$ to $1$.
The associated notion of directed noise stability,
i.e.\ $\left\langle f,\art f\right\rangle _{\mu_{q}}$,
is intuitively a measure of how close
a Boolean function $f$ is to being monotone.
Indeed, for any $(\mathbf{x},\mathbf{y})$ with all $x_i \le y_i$
we have $f\left(\xb\right)f\left(\yb\right)\le f\left(\xb\right)$,
with equality if $f$ is monotone, so
\[ \left\langle f,\art f\right\rangle
= \mathbb{E}_{(\mathbf{x},\mathbf{y})\sim D\left(p,q\right)}
  \left[f\left(\xb\right)f\left(\yb\right)\right]
\le \mathbb{E}_{(\mathbf{x},\mathbf{y})
\sim D\left(p,q\right)}\left[f\left(\xb\right)\right]
=\mu_{p}\left(f\right),\]
with equality if $f$ is monotone\footnote{
The starting point for \cite{lifshitz2018hypergraph}
is the observation that this inequality is close
to an equality if $f$ is almost monotone.}.
We note that the adjoint operator
$\left(\mathrm{T}^{p\to q}\right)^{\star}:
L^2(\{0,1\}^n, \mu _q) \to  L^2(\{0,1\}^n, \mu _p)$
defined by $\bgen{\mathrm{T}^{p\to q}f,g}
= \bgen{f,\left(\mathrm{T}^{p\to q}\right)^{\star}g} $
 satisfies $\left(\mathrm{T}^{p\to q}\right)^{\star}
= \mathrm{T}^{q\to p}$, where
\[ \mathrm{T}^{q\to p}\left(g\right)\left(x\right)
=\mathbb{E}_{(\xb,\yb)\sim D\left(p,q\right)}
\left[g\left(\yb\right)|\,\xb=x\right]. \]
The following simple calculation relates these
operators to the noise operator.

\begin{lem} \label{calcrho}
Let $0<p<q<1$ and $\rho = \frac {p(1-q)}{q(1-p)}$. Then
$\left(\mathrm{T}^{p\to q}\right)^{\star}\mathrm{T}^{p\to q}
= \mathrm{T}_{\rho}$ on $L^2(\{0,1\}^n, \mu _p)$.
\end{lem}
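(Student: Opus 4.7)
The approach is to reduce to a single coordinate and then check the identity by a direct matrix calculation. Since the distribution $D(p,q)$ is a product over coordinates, both $\mathrm{T}^{p\to q}$ and its adjoint $\mathrm{T}^{q\to p}$ act as tensor products of the corresponding single-coordinate operators; the same is true of $\mathrm{T}_{\rho}$ by definition. Consequently it suffices to prove the identity in the case $n=1$, since both sides then factor as $n$-fold tensor powers of equal $2\times 2$ matrices.

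Next I would record the one-coordinate joint distribution: $\Pr(\xb=0,\yb=0)=1-q$, $\Pr(\xb=0,\yb=1)=q-p$, $\Pr(\xb=1,\yb=1)=p$, and the forbidden case $\Pr(\xb=1,\yb=0)=0$. From this one reads off the two conditional distributions defining the operators:
\[
\mathrm{T}^{p\to q}f(0)=f(0),\qquad \mathrm{T}^{p\to q}f(1)=\tfrac{q-p}{q}f(0)+\tfrac{p}{q}f(1),
\]
and
\[
\mathrm{T}^{q\to p}g(1)=g(1),\qquad \mathrm{T}^{q\to p}g(0)=\tfrac{1-q}{1-p}g(0)+\tfrac{q-p}{1-p}g(1).
\]
Composing, the kernel $K(x,x')$ of $(\mathrm{T}^{p\to q})^\star\mathrm{T}^{p\to q}=\mathrm{T}^{q\to p}\mathrm{T}^{p\to q}$ is obtained by summing $\Pr(\yb=y\mid\xb=x)\Pr(\xb'=x'\mid\yb=y)$ over $y\in\{0,1\}$.

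The goal then is to match this kernel with that of $\mathrm{T}_\rho$, which on one coordinate is $\rho\,\mathbf{1}[x=x']+(1-\rho)\mu_p(x')$. The key computation is that with the specified value $\rho=\frac{p(1-q)}{q(1-p)}$ one has
\[
1-\rho=\frac{q(1-p)-p(1-q)}{q(1-p)}=\frac{q-p}{q(1-p)},
\]
and then a direct check gives $(1-\rho)(1-p)=\frac{q-p}{q}$ and $\rho+(1-\rho)p=\frac{p}{q}$, which are precisely the off-diagonal and diagonal entries appearing in $\mathrm{T}^{p\to q}f(1)$; a parallel short calculation handles the $\mathrm{T}^{p\to q}f(0)$ row via the formula for $\mathrm{T}^{q\to p}$. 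Since both kernels agree entrywise, the one-coordinate identity holds, and tensorising finishes the proof.

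There is really no serious obstacle here: the only care needed is to keep the two conditioning directions straight (one samples $\yb$ given $\xb$, the next samples $\xb'$ given $\yb$) and to verify that the resulting product measure on $\xb'$ given $\xb$ is exactly the $p$-biased noise kernel with the stated parameter $\rho$. The choice of $\rho$ is forced by the requirement $(1-\rho)(1-p)=(q-p)/q$, which is the easiest way to remember the formula.
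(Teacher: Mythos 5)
Your proposal is correct and follows essentially the same route as the paper: reduce to a single coordinate by tensorization, then verify that the composed transition kernel $\mathrm{T}^{q\to p}\mathrm{T}^{p\to q}$ agrees with that of $\mathrm{T}_\rho$ on a $p$-biased bit by conditioning on $x$ and computing row by row. The paper phrases the check as matching two distributions on pairs of $p$-biased bits and only records $\Pr(\mathbf{x}'\neq\mathbf{x}\mid\mathbf{x})$, but this is the same row-by-row kernel comparison you carry out, with the identical algebra $(1-\rho)(1-p)=(q-p)/q$ and $(1-\rho)p=p(q-p)/(q(1-p))$.
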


\begin{proof}
We need to show that the following distributions
on pairs of $p$-biased bits $(\mathbf{x},\mathbf{x'})$ are identical:
(a) let $\mathbf{x}$ be a $p$-biased bit, with probability $\rho$
let $\mathbf{x'}=\mathbf{x}$,
otherwise let $\mathbf{x'}$ be an independent $p$-biased bit,
(b) let $(\mathbf{x},\mathbf{y}) \sim D(p,q)$ and then
$(\mathbf{x'},\mathbf{y}) \sim D(p,q) \mid y$.
It suffices to show $\mb{P}(x \ne x')$
is the same in both distributions.
We condition on $x$. Consider $x=1$. In distribution (a)
we have $\mb{P}(\mathbf{x'}=0) = (1-\rho)(1-p)$.
In distribution (b) we have $\mb{P}(\mathbf{y}=1)=1$
and then $\mb{P}(\mathbf{x'}=0) = 1-p/q = (1-\rho)(1-p)$, as required.
Now consider $\mathbf{x}=0$. In distribution (a)
we have $\mb{P}(\mathbf{x'}=1) = (1-\rho)p$.
In distribution (b) we have $\mb{P}(\mathbf{y}=1)=\tfrac{q-p}{1-p}$
and then $\mb{P}(\mathbf{x'}=1 \mid \mathbf{y}=1) = p/q$, so
$\mb{P}(\mathbf{x'}=1) = \tfrac{p(q-p)}{q(1-p)} = (1-\rho)p$, as required.
\end{proof}

We now give an alternative way to deduce sharp threshold results,
using noise sensitivity, rather than the traditional approach
via total influence (as in the proof of Theorem \ref{thm:trad}).
Our alternative approach has the following additional
nice features, both of which have been found useful in
Extremal Combinatorics (see \cite{lifshitz2018hypergraph}).
\begin{enumerate}
\item To deduce a sharp threshold result in an interval
$\left[p,q\right]$ it is enough to show that $f$ is
global only according to the $p$-biased distribution.
This is a milder condition than the one in the traditional approach,
that requires globalness throughout the entire interval.
\item The monotonicity requirement may be relaxed
to ``almost monotonicity''.
\end{enumerate}

\begin{prop}
\label{prop:Noise sensetivity implies sharp threshold}
Let $f\colon\left\{ 0,1\right\} ^{n}\to\left\{ 0,1\right\} $
be a monotone Boolean function. Let $0<p<q<1$
and $\rho=\frac{p\left(1-q\right)}{q\left(1-p\right)}$.
Then $\mu_q(f) \ge \mu_p(f)^2 / \mathrm{Stab}_{\rho}\left(f\right)$.
\end{prop}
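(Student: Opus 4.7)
The plan is to combine three ingredients already assembled in the excerpt: the identity $\langle f, \mathrm{T}^{p\to q} f\rangle_{L^2(\mu_q)} = \mu_p(f)$ coming from monotonicity, the adjoint relation $(\mathrm{T}^{p\to q})^{\star}\mathrm{T}^{p\to q} = \mathrm{T}_\rho$ on $L^2(\mu_p)$ from Lemma \ref{calcrho}, and a single application of Cauchy--Schwarz. Everything else is bookkeeping about which measure each inner product or norm is taken with respect to.

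First I would write, using the description of $\mathrm{T}^{p\to q}$ via $D(p,q)$, that
\[
\langle f, \mathrm{T}^{p\to q} f\rangle_{L^2(\mu_q)}
= \mathbb{E}_{(\xb,\yb)\sim D(p,q)}\bigl[f(\xb)f(\yb)\bigr].
\]
Since $f$ is monotone and $\xb_i \le \yb_i$ pointwise, $f(\xb) \le f(\yb)$, so $f(\xb)f(\yb) = f(\xb)$ and the right-hand side equals $\mu_p(f)$ (this is exactly the equality case noted right before Lemma \ref{calcrho}).

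Next I would apply Cauchy--Schwarz in $L^2(\mu_q)$:
\[
\mu_p(f) = \langle f, \mathrm{T}^{p\to q} f\rangle_{L^2(\mu_q)}
\le \|f\|_{L^2(\mu_q)} \cdot \|\mathrm{T}^{p\to q} f\|_{L^2(\mu_q)}.
\]
Since $f$ is Boolean, $\|f\|_{L^2(\mu_q)}^2 = \mu_q(f)$. For the other factor, the adjoint identity together with Lemma \ref{calcrho} gives
\[
\|\mathrm{T}^{p\to q} f\|_{L^2(\mu_q)}^2
= \langle (\mathrm{T}^{p\to q})^{\star}\mathrm{T}^{p\to q} f, f\rangle_{L^2(\mu_p)}
= \langle \mathrm{T}_\rho f, f\rangle_{L^2(\mu_p)}
= \mathrm{Stab}_\rho(f).
\]

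Squaring the Cauchy--Schwarz inequality and rearranging then yields $\mu_q(f) \ge \mu_p(f)^2/\mathrm{Stab}_\rho(f)$, as desired. There is essentially no obstacle here: the work was done in setting up Lemma \ref{calcrho} and in observing the monotonicity equality; the only point requiring mild care is keeping track of the measure underlying each inner product, since $\mathrm{T}^{p\to q}$ maps $L^2(\mu_p)$ to $L^2(\mu_q)$ while $\mathrm{T}_\rho$ acts on $L^2(\mu_p)$. I also note that the proof used monotonicity only to convert the directed noise stability $\langle f, \mathrm{T}^{p\to q} f\rangle$ into $\mu_p(f)$, so it extends verbatim to almost monotone $f$ with a corresponding loss term -- which is presumably the basis for the ``almost monotone'' flexibility advertised just before the proposition.
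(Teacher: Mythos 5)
Your proof is correct and is essentially identical to the paper's: both use monotonicity to write $\mu_p(f)=\langle f,\mathrm{T}^{p\to q}f\rangle_{\mu_q}$, apply Cauchy--Schwarz in $L^2(\mu_q)$, and then use the adjoint identity $(\mathrm{T}^{p\to q})^{\star}\mathrm{T}^{p\to q}=\mathrm{T}_\rho$ from Lemma \ref{calcrho} to identify $\|\mathrm{T}^{p\to q}f\|_{\mu_q}^2$ with $\mathrm{Stab}_\rho(f)$. The only difference is cosmetic (you unfold Cauchy--Schwarz into norms before squaring, whereas the paper keeps it as one chain of inner products).
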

\begin{proof}
By Cauchy\textendash Schwarz and Lemma \ref{calcrho},
\begin{align*}
\mu_{p}\left(f\right)^2=\left\langle \art f,f\right\rangle _{\mu_{q}}^2\le \left\langle \art f,\art f\right\rangle _{\mu_{q}}\left\langle f,f\right\rangle _{\mu_{q}}
=\left\langle \mathrm{T}_{\rho}f,f\right\rangle _{\mu_{p}}\mu_{q}\left(f\right).
& \qedhere
\end{align*}
\end{proof}

The above proof works not only for monotone functions,
but also for functions where the first equality above
is replaced by approximate equality (which is a natural
notion for a function to be ``almost monotone'').
The following sharp threshold theorem for global functions
is immediate from Theorem \ref{thm:quantitative noise sensitivity}
and Proposition \ref{prop:Noise sensetivity implies sharp threshold}.

\begin{thm}
\label{thm:global sharp threshold}
For any $\zeta>0$ there is $C_0>1$ so that
for any $\eps,p,q \in (0,1/2)$ with $q\ge (1+\zeta)p$
and $C>C_0$, writing $r=C\log \eps^{-1}$ and $\dD=C^{-r}$,
any monotone $(r,\delta)$-global Boolean function $f$ whose $p$-biased
measure is at most $\delta$
satisfies $\mu_q(f) \ge \eps^{-1} \mu_p(f)$.
\end{thm}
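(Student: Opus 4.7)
The plan is to combine the noise-sensitivity bound for global sparse functions (Theorem \ref{thm:quantitative noise sensitivity}) with the monotone-function inequality relating $\mu_q(f)$ to noise stability (Proposition \ref{prop:Noise sensetivity implies sharp threshold}); the directed-noise calculation of Lemma \ref{calcrho} already tells us the right noise rate to use. In outline: control $\mathrm{Stab}_\rho(f)$ via globalness, then convert that into a multiplicative jump from $\mu_p(f)$ to $\mu_q(f)$.

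First I would set $\rho = p(1-q)/(q(1-p))$, the parameter dictated by Proposition \ref{prop:Noise sensetivity implies sharp threshold}. Since $q \ge (1+\zeta)p$ and $p,q \in (0,1/2)$, we have $\rho \le p/q \le 1/(1+\zeta)$, so $\log(1/\rho) \ge c_\zeta := \log(1+\zeta) > 0$ depends only on $\zeta$.

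Next I would invoke Theorem \ref{thm:quantitative noise sensitivity} with this $\rho$ and with target accuracy $\eps$. That theorem requires $f$ to be $(r_0,\delta_0)$-global with $r_0 = \log(2/\eps)/\log(1/\rho)$ and $\delta_0 = 10^{-3r_0-1}\eps^3$, and yields $\mathrm{Stab}_\rho(f) \le \eps\, \mu_p(f)$. Our hypothesis gives $f$ $(r,\delta)$-global with $r = C\log\eps^{-1}$ and $\delta = C^{-r}$; I need to check $r \ge r_0$ and $\delta \le \delta_0$ (plus $\mu_p(f) < \delta_0$, which follows from $\mu_p(f)\le \delta$). The first holds as soon as $C \ge 2/c_\zeta$, since $\log(2/\eps) \le 2\log\eps^{-1}$ for $\eps \le 1/2$. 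For the second, $\delta = \eps^{C\log C}$ while $\delta_0$ is at worst $\eps^{O_\zeta(1)}$ (its $\eps$-exponent depends only on $r_0$, which is $O_\zeta(\log\eps^{-1})$, but with a mild constant in front of $\log\eps^{-1}$), so both bounds become valid once $C$ exceeds a threshold $C_0(\zeta)$.

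Finally, with $\mathrm{Stab}_\rho(f) \le \eps\, \mu_p(f)$ in hand, Proposition \ref{prop:Noise sensetivity implies sharp threshold} gives
\[
\mu_q(f) \;\ge\; \frac{\mu_p(f)^2}{\mathrm{Stab}_\rho(f)} \;\ge\; \eps^{-1}\mu_p(f),
\]
as desired. There is no real obstacle: the entire argument is a clean composition of the two inputs, and the only thing to watch is that a single $C_0 = C_0(\zeta)$ works uniformly in $\eps$, which reduces to the observation that both $r_0$ and $-\log(\delta_0)/\log(\eps^{-1})$ are bounded by constants times $1$ and $\log\eps^{-1}$ respectively, with constants depending only on $\zeta$.
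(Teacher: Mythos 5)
Your proposal is correct and takes exactly the approach the paper intends: Theorem \ref{thm:global sharp threshold} is stated there to be ``immediate from'' Theorem \ref{thm:quantitative noise sensitivity} and Proposition \ref{prop:Noise sensetivity implies sharp threshold}, which is precisely the composition you carry out, with the parameter checks ($r \ge r_0$, $\delta \le \delta_0$, $\mu_p(f)<\delta_0$) correctly reduced to choosing $C_0=C_0(\zeta)$ large. The only minor imprecision is the phrase that $\delta_0$'s ``$\eps$-exponent depends only on $r_0$'' — more precisely $\delta_0 \ge \tfrac{1}{10}\eps^{A_\zeta}$ with $A_\zeta = 6\log 10/\log(1+\zeta)+3$ — but this does not affect the conclusion.
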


\section{General hypercontractivity} \label{sec:genhyp}

In this section we generalise Theorem \ref{thm:Hypercontractivity} in two
different directions. One direction is showing hypercontractivity
from general $q$-norms to the $2$-norm (rather than merely
treating the case $q=4$); the other is replacing the cube
by general product spaces.

\subsection{Hypercontractivity with general norms}
\label{subsec:multilinear}

We start by describing a more convenient general setting
in which we replace characters on the cube
by arbitrary random variables.
To motivate this setting, we remark that one can extend
the proof of Theorem \ref{thm:hypref} to any
random variable of the form
\begin{equation} \label{eq:fRV}
f  = \sum_{S\subset [n]} a_S\prod_{i\in S} \mathbf{Z}_i,
\end{equation}
where $\mathbf{Z}_1,\ldots,\mathbf{Z}_n$
are independent real-valued random variables having
expectation $0$, variance $1$ and $4$th moment at most
$\sigma^{-2}$. To motivate the analogous setting for general $q>2$,
we note that the characters $\chi_i^p$ satisfy
\[
  \mathbb{E}[|\chi_i^p|^q]\le
  \|\chi_i^p\|_\infty^{q-2}\|\chi_i^p\|_2^2 = \sigma^{2-q}.
  \]
This suggests replacing the $4$th moment condition by
$\|\mathbf{Z}_i\|_q^q\le \sigma^{2-q}$.
Given $f$ as in \eqref{eq:fRV}, we define
the (generalised) derivatives by substituting
the random variables $Z_i$ for the characters $\chi_i^p$
in our earlier Fourier formulas, i.e.\
\[
  \mathrm{D}_i[f]=\sum_{S:\,i\in S}a_S\prod_{j\in S\setminus \{i\}}
  \mathbf{Z}_i
  \quad \text{and} \quad
  \mathrm{D}_T(f)=\sum_{S:\, T\subset
    S}a_S\prod_{j\in S\backslash T}\mathbf{Z}_i,
\]
Similarly, we adopt analogous definitions of the
generalised influences and noise operator, i.e.\
 \[
  \mathrm{I}_S[f]=
  \|\frac{1}{\sigma}\mathrm{D}_S[f]\|_2^2
  \quad \text{and} \quad
  \mathrm{T}_\rho[f]=\sum_S \rho^{|S|}a_S\prod_{i\in S}{\mathbf{Z}_i}.
\]

We prove the following hypercontractive inequality.
\begin{thm}\label{thm:qth moment}
  Let $q \ge 2$ and $\mathbf{Z}_1,\ldots,\mathbf{Z}_n$
  be independent real-valued  random variables satisfying
  \[
    \mathbb{E}[\mathbf{Z}_i]=0, \quad  \mathbb{E}[\mathbf{Z}_i^2]=1,
    \quad \text{and} \quad \mathbb{E} [\left| \mathbf{Z}_i \right|^q] \le \sigma^{2-q}.
  \]
  Let $f=\sum_{S\subset [n]}  a_S\prod_{i\in S}\mathbf{Z}_i$ and
  $\rho< \frac{1}{2q^{1.5}} $. Then
  \[
    \| \mathrm{T}_{\rho}f \|_q^q \le \sum_{S\subset [n]}
    \sigma^{(2-q)|S|}\| \mathrm{D}_S(f) \|_2^{q}.
    \]
  \end{thm}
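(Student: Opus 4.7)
I would generalize the three-step argument that established Theorem \ref{thm:hypref} (the case $q=4$): (i) a single-coordinate replacement lemma comparing $Z_t$ to a uniform $\{-1,1\}$-Bernoulli $\epsilon_t$, (ii) iteration across all $n$ coordinates, and (iii) a final application of Bonami--Beckner hypercontractivity on the uniform Bernoulli cube.

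\textbf{Step 1 (replacement of one coordinate).} Fix $t$ and condition on the remaining variables to write, pointwise, $\mathrm{T}_\rho f = E + \rho Z_t G$. The heart of the proof is the one-dimensional inequality
\[
\mathbb{E}_{Z_t}\!\bigl[|E+\rho Z_t G|^q\bigr] \;\le\; \mathbb{E}_{\epsilon_t}\!\bigl[|E+\rho'\epsilon_t G|^q\bigr] + C(q)\,\sigma^{2-q}\rho^q |G|^q,
\]
where $\rho'$ is a suitably larger benchmark noise rate (in the $q=4$ case, $\rho'=2\rho$). Since $\mathbb{E}[Z_t]=\mathbb{E}[\epsilon_t]=0$ and $\mathbb{E}[Z_t^2]=\mathbb{E}[\epsilon_t^2]=1$, the zeroth, first, and second order contributions to a Taylor expansion of $x \mapsto |E+x|^q$ around $0$ agree between $Z_t$ and $\epsilon_t$, while the remainder is controlled using the hypothesis $\mathbb{E}[|Z_t|^q] \le \sigma^{2-q}$ together with standard interpolation estimates on intermediate moments.

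\textbf{Steps 2 and 3 (iteration and Bernoulli hypercontractivity).} Iterating Step 1 across all $n$ coordinates, in the style of Lemma \ref{hypinduct}, yields
\[
\|\mathrm{T}_\rho f\|_q^q \;\le\; \sum_{S\subset[n]} \bigl(C(q)\sigma^{2-q}\rho^q\bigr)^{|S|} \; \bigl\|\mathrm{T}_{\rho'}(\mathrm{D}_S f)^{\mathrm{Ber}}\bigr\|_q^q,
\]
where $(\mathrm{D}_S f)^{\mathrm{Ber}}$ denotes the multilinear polynomial with the same coefficients as $\mathrm{D}_S f$ but evaluated on independent uniform Bernoullis. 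Applying the classical $(q,2)$-Bonami--Beckner inequality in the Bernoulli cube with $\rho'\le 1/\sqrt{q-1}$ gives $\|\mathrm{T}_{\rho'}(\mathrm{D}_S f)^{\mathrm{Ber}}\|_q \le \|\mathrm{D}_S f\|_2$, and the hypothesis $\rho\le 1/(2q^{1.5})$ ensures both that $\rho'\le 1/\sqrt{q-1}$ can be arranged (taking $\rho'\asymp q\rho$) and that $C(q)\rho^q \le 1$, so the coefficient of $\|\mathrm{D}_S f\|_2^q$ simplifies to $\sigma^{(2-q)|S|}$, proving the theorem.

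\textbf{Main obstacle.} The principal difficulty lies in Step 1 when $q$ is not an even integer: $|E+x|^q$ is not polynomial in $x$, so one cannot mimic the clean expansion of Lemma \ref{lem:Replacement step} and handle the cross term via Cauchy--Schwarz and AM--GM as was done for $q=4$. Instead one needs a Taylor-type bound of the form
\[
\Bigl||E+x|^q - |E|^q - q|E|^{q-2}E\,x - \tbinom{q}{2}|E|^{q-2}x^2\Bigr| \;\le\; C_q\bigl(|E|^{q-3}|x|^3 + |x|^q\bigr),
\]
with the remainder integrated against $Z_t$ using the $q$-th moment hypothesis and H\"older-interpolated bounds $\mathbb{E}[|Z_t|^k]\le \sigma^{(2-q)k/q}$ for intermediate $k$. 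The precise scaling $\rho \le 1/(2q^{1.5})$ plausibly arises from balancing the $\tbinom{q}{2}\asymp q^2$ coefficient against an additional $\sqrt q$ factor needed to stay below the classical $1/\sqrt{q-1}$ hypercontractivity threshold.
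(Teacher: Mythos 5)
Your outline has the right skeleton (one-dimensional replacement step, iteration across coordinates, then Bonami--Beckner on the uniform cube), and your guess that the benchmark noise rate should be $\rho'\asymp q\rho$ to make $2q\rho\le 1/\sqrt{q-1}$ possible is exactly what the paper does. But the heart of the matter is the one-dimensional lemma, and there you have a genuine gap rather than just an unworked detail. You propose a \emph{third-order} Taylor bound
\[
\Bigl||E+x|^q - |E|^q - q|E|^{q-2}E\,x - \tbinom{q}{2}|E|^{q-2}x^2\Bigr| \le C_q\bigl(|E|^{q-3}|x|^3 + |x|^q\bigr),
\]
and integrate the remainder against the law of $Z_t$. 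This is fine for $q\ge 3$, but it fails for $2<q<3$: the map $x\mapsto |E+x|^q$ is only $C^2$ (its third derivative blows up near $x=-E$), and your remainder term $|E|^{q-3}|x|^3$ itself diverges as $|E|\to 0$ precisely when $q<3$. You name this as ``the principal difficulty'' but do not overcome it, and since the theorem is claimed for all real $q\ge2$, this is exactly the range the proof must cover.

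The paper avoids third derivatives entirely by an ODE-comparison argument (Lemma \ref{lem:n=1}): after normalizing $e=1$, set $f(d)=\|1+\rho d\mathbf{Z}\|_q^q$ and $g(d)=\|1+d\chi^{1/2}\|_q^q+\sigma^{2-q}d^q$, check $f(0)=g(0)$ and $f'(0)=g'(0)=0$, and show $f''(d)\le g''(d)$ for all $d$. Since $|x|^q$ is $C^2$ for $q>2$, both second derivatives exist and are controlled by $|1+\rho d\mathbf{Z}|^{q-2}$, which is tame. The key estimate $f''\le g''$ is then established by splitting the expectation over the events $E_1=\{|1+\rho d\mathbf{Z}|\le |d\mathbf{Z}|\}$ and its complement $E_2$: on $E_1$ one bounds by $q(q-1)d^{q-2}\mathbb{E}[|\mathbf{Z}|^q]\le q(q-1)d^{q-2}\sigma^{2-q}$ (matching the second summand in $g''$), and on $E_2$ the constraint $\rho^{-1}\ge 2q$ forces $|1+\rho d\mathbf{Z}|\le 1+\tfrac{1}{2q-1}$ so $f''_2\le e\rho^2 q(q-1)\le q(q-1)/2$ (matching the first summand). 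This device is what makes the argument uniform in $q$, and it is the ingredient missing from your proposal. If you want to salvage the Taylor route, you would need a different remainder formulation for $2<q<3$ and a separate argument; the paper's second-derivative comparison with event-splitting is cleaner and handles all $q$ at once.
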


  Theorem \ref{thm:qth moment} is a qualitative generalisation
  of Theorem \ref{thm:hypref} (with smaller $\rho$, which we do not
  attempt to optimise). The following generalised variant of
  Theorem \ref{thm:Hypercontractivity} follows by repeating
  the proof in Section \ref{sec:hyp}.
  \begin{thm}
    Let $q>2$, let $f=\sum_{S \sub [n]}a_S\prod_{i\in S}\mathbf{Z_i}$ let
    $\delta>0$, and let $\rho \le (2q)^{-1.5}$. Suppose
    that $\mathrm{I}_S[f] \le \beta \|f\|_2^2$ for all $S\subset
    [n]$. Then
    \[
      \| \mathrm{T}_{\rho} [f] \|_q \le \beta^{\frac{q-2}{2q}} \|f\|_2.
      \]
    \end{thm}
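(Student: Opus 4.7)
The plan is to follow exactly the strategy of the proof of Theorem~\ref{thm:hyp+}, substituting Theorem~\ref{thm:qth moment} for Theorem~\ref{thm:hypref}. First I would factor the noise operator as $\mathrm{T}_\rho = \mathrm{T}_{\rho'}\mathrm{T}_c$ with $c = 1/\sqrt{2}$ and $\rho' = \rho\sqrt{2}$. The hypothesis $\rho \le (2q)^{-1.5} = 1/(2\sqrt{2}\,q^{1.5})$ becomes $\rho' \le 1/(2q^{1.5})$, so (after an infinitesimal shrinking to turn the non-strict inequality into the strict one required there) Theorem~\ref{thm:qth moment} applies to $\mathrm{T}_c f$ at noise rate $\rho'$, giving
\[
\|\mathrm{T}_\rho f\|_q^q = \|\mathrm{T}_{\rho'}\mathrm{T}_c f\|_q^q \le \sum_{S\subset [n]} \sigma^{(2-q)|S|} \|\mathrm{D}_S \mathrm{T}_c f\|_2^q.
\]

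Next I would split $\|\mathrm{D}_S \mathrm{T}_c f\|_2^q = \|\mathrm{D}_S \mathrm{T}_c f\|_2^{q-2}\cdot \|\mathrm{D}_S \mathrm{T}_c f\|_2^2$ and handle the two factors differently. For the first factor, $L^2$-contractivity of $\mathrm{T}_c$ gives $\|\mathrm{D}_S \mathrm{T}_c f\|_2 \le \|\mathrm{D}_S f\|_2$, and then the hypothesis $\mathrm{I}_S[f] \le \beta\|f\|_2^2$, combined with the identity $\|\mathrm{D}_S f\|_2^2 = \sigma^{2|S|}\mathrm{I}_S[f]$ from Section~\ref{subsec:multilinear}, yields
\[
\|\mathrm{D}_S \mathrm{T}_c f\|_2^{q-2} \le \sigma^{(q-2)|S|}\beta^{(q-2)/2}\|f\|_2^{q-2}.
\]
The $\sigma^{(q-2)|S|}$ appearing here exactly cancels the prefactor $\sigma^{(2-q)|S|}$, leaving only the second factor inside the sum over $S$.

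For that second factor, I would expand $\mathrm{D}_S \mathrm{T}_c f = \sum_{E\supset S} c^{|E|} a_E\prod_{i\in E\setminus S}\mathbf{Z}_i$ and use orthonormality of the products $\prod_{i\in T}\mathbf{Z}_i$ (valid because $\mathbb{E}[\mathbf{Z}_i]=0$, $\mathbb{E}[\mathbf{Z}_i^2]=1$) to obtain $\|\mathrm{D}_S \mathrm{T}_c f\|_2^2 = \sum_{E\supset S} c^{2|E|} a_E^2$. Interchanging the order of summation,
\[
\sum_{S\subset [n]} \|\mathrm{D}_S \mathrm{T}_c f\|_2^2 = \sum_{E\subset [n]} a_E^2 \,(2c^2)^{|E|} = \sum_{E} a_E^2 = \|f\|_2^2,
\]
where the crucial identity $2c^2 = 1$ is what dictates the choice $c = 1/\sqrt{2}$. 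Assembling everything gives $\|\mathrm{T}_\rho f\|_q^q \le \beta^{(q-2)/2}\|f\|_2^q$, and taking $q$-th roots yields the claimed bound.

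The main obstacle is purely calibration: the constraint $\rho \le (2q)^{-1.5}$ is precisely the product of the two conditions $c = 1/\sqrt{2}$ (needed so that $2c^2 = 1$ makes the telescoping sum collapse to $\|f\|_2^2$) and $\rho/c \le 1/(2q^{1.5})$ (placing $\mathrm{T}_{\rho/c}$ in the range where Theorem~\ref{thm:qth moment} applies). Once this calibration is fixed, every step is a direct analogue of the $q=4$ argument in Theorem~\ref{thm:hyp+} and introduces no new conceptual difficulty.
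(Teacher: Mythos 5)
Your proof is correct and is exactly the ``repeating the proof in Section~\ref{sec:hyp}'' that the paper alludes to without spelling out: factor $\mathrm{T}_\rho = \mathrm{T}_{\rho'}\mathrm{T}_{1/\sqrt{2}}$, invoke Theorem~\ref{thm:qth moment} at rate $\rho'$, bound $q-2$ of the $q$ powers of $\|\mathrm{D}_S\mathrm{T}_{1/\sqrt{2}}f\|_2$ via the hypothesis on $\mathrm{I}_S$ (which cancels $\sigma^{(2-q)|S|}$), and telescope the remaining squared norms using $2\cdot(1/\sqrt{2})^2=1$. You have also correctly identified the calibration $\rho' = \rho\sqrt{2}$, and the strict versus non-strict boundary is indeed inessential (the actual constraints used inside the proof of Theorem~\ref{thm:qth moment}, namely $\rho' < 1/(2q)$ and $2q\rho' \le 1/\sqrt{q-1}$, hold even at $\rho' = 1/(2q^{1.5})$).
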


We now begin with the ingredients of the
proof of Theorem \ref{thm:qth moment},
following that of Theorem \ref{thm:hypref}.
For $0 \le t \le n$ let
\[ f_t = \sum_S a_S \chi_S^t, \ \ \text{ where }
\chi^t_S = \prod\limits_{i\in S\cap [t]}{\chi^{1/2}_i}
  \prod\limits_{i\in S\setminus [t]}{\mathbf{Z}_i}. \]
Here, just as in Section \ref{sec:hyp},
the function $f_t$ interpolates from the original function
$f_0=f$ to $f_n = \sum_S a_S \chi^{1/2}_S
\in L^2(\{0,1\}^n,\mu_{1/2})$.
As $\{ \chi^t_S: S \sub [n]\}$ are orthonormal
we have $\|f_t\|_2 = \|f\|_2$ for all $t$.

As before, we define the noise operators
$\mathrm{T}^{t}_{\rho',\rho}$ on a function
$f = \sum_S a_S\chi_S^t$ by
\[ \mathrm{T}^t[f]
=\sum_S\rho'^{|S \cap [t]|}\rho^{|S\setminus [t]|}a_S\chi_S^t.\]
Thus $\mathrm{T}^t_{\rho',\rho}$ interpolates from
$\mathrm{T}^0_{\rho',\rho}=\mathrm{T}_{\rho}$ (for the original function) to
$\mathrm{T}^n_{\rho',\rho}=\mathrm{T}_{\rho'}$ (for $\mu_{1/2}$).

Our goal will now be to adjust Lemma \ref{lem:Replacement step}
to the general setting, which is similar in spirit to the 4-norm case,
although somewhat trickier. It turns out that the case $n=1$ poses the
main new difficulties, so we start with this in the next lemma.

\begin{lem} \label{lem:n=1}
Let $q>2$ and
$\mathbf{Z}$ be a random variable satisfying $\mathbb{E}[\mathbf{Z}]=0,
\mathbb{E}[\mathbf{Z}^2]=1, \mathbb{E}[|\mathbf{Z}|^q]\le \sigma^{2-q}.$
Let $e,d\in \mathbb{R}$ and $\rho\in(0,\frac{1}{2q})$.
Then $\|e + \rho d\mathbf{Z}\|_q^q \le \|e + d
\chi^{\frac{1}{2}}\|_q^q+\sigma^{2-q}d^q$.
\end{lem}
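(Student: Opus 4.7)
The plan is to reduce to the base case $d=1$ by homogeneity — both sides of the desired inequality transform by $\lambda^q$ under the simultaneous rescaling $(e,d)\mapsto(\lambda e,\lambda d)$ — and then perform a case split on whether $|\rho\mathbf{Z}|\le 1$ (the bulk event $A$) or $|\rho\mathbf{Z}|>1$ (the tail event $B=A^c$), using Jensen's inequality on $A$ and a direct moment estimate on $B$.

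On $A$, the value $v=\rho\mathbf{Z}\in[-1,1]$ is the mean of the Rademacher law $\tfrac{1+v}{2}\delta_{+1}+\tfrac{1-v}{2}\delta_{-1}$, so convexity of $x\mapsto|e+x|^q$ yields the pointwise bound
\[
|e+\rho\mathbf{Z}|^q\;\le\;\tfrac{1+\rho\mathbf{Z}}{2}|e+1|^q+\tfrac{1-\rho\mathbf{Z}}{2}|e-1|^q\qquad\text{on }A.
\]
Integrating and using $\mathbb{E}[\mathbf{Z}]=0$ (so that $\mathbb{E}[\mathbf{Z}\mathbf{1}_A]=-\mathbb{E}[\mathbf{Z}\mathbf{1}_B]$) produces $\|e+\chi^{1/2}\|_q^q\cdot\mathbb{P}(A)$ plus a linear correction involving $\rho\mathbb{E}[\mathbf{Z}\mathbf{1}_B]$ and $|e+1|^q-|e-1|^q$.

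On $B$, the condition $|\rho\mathbf{Z}|>1$ lets me sharpen the triangle inequality to $|e+\rho\mathbf{Z}|\le(|e|+1)|\rho\mathbf{Z}|$, giving
\[
\mathbb{E}\big[|e+\rho\mathbf{Z}|^q\mathbf{1}_B\big]\;\le\;(|e|+1)^q\,\rho^q\mathbb{E}[|\mathbf{Z}|^q\mathbf{1}_B]\;\le\;(|e|+1)^q\rho^q\sigma^{2-q}.
\]
The layer-cake observation $|\mathbf{Z}|\mathbf{1}_B\le\rho^{q-1}|\mathbf{Z}|^q$ then supplies $\mathbb{P}(B)\le\rho^q\sigma^{2-q}$ and $\mathbb{E}[|\mathbf{Z}|\mathbf{1}_B]\le\rho^{q-1}\sigma^{2-q}$, which control the linear $A$-correction via the mean-value estimate $\big||e+1|^q-|e-1|^q\big|\le 2q(|e|+1)^{q-1}$. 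Combining the $A$- and $B$-contributions and using the key absorption $(|e|+1)^q\le 2\|e+\chi^{1/2}\|_q^q$ (valid because $\max(|e+1|,|e-1|)=|e|+1$) reduces the problem to verifying that a residual of shape $\rho^q\sigma^{2-q}\cdot[\text{polynomial in } q]\cdot\|e+\chi^{1/2}\|_q^q + \rho^q\sigma^{2-q}\cdot q\,(|e|+1)^{q-1}$ is dominated by the slack $\sigma^{2-q}$, which is exactly where the hypothesis $\rho<1/(2q)$ enters.

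The main obstacle is the $B$-estimate: the naive convexity bound $|e+\rho\mathbf{Z}|^q\le 2^{q-1}(|e|^q+|\rho\mathbf{Z}|^q)$ leaves an unabsorbable $|e|^q\mathbb{P}(B)$ term for large $|e|$, so the sharpening $|e+\rho\mathbf{Z}|\le(|e|+1)|\rho\mathbf{Z}|$ (valid only on $B$) is essential, as it converts the offending factor into $(|e|+1)^q$, which the comparison $(|e|+1)^q\le 2\|e+\chi^{1/2}\|_q^q$ then absorbs. Everything else — the Jensen step, the tail moment estimates, and the MVT correction — is routine; the delicate balancing of the resulting $\rho^q$-weighted terms against the slack $\sigma^{2-q}$, under the precise threshold $\rho<1/(2q)$, is where the technical care is concentrated.
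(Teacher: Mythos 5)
There is a genuine gap in your $B$-estimate and the absorption step, and it occurs precisely in the large-$|e|$ regime.

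After normalizing to $d=1$, your $B$-bound is
\[
\mathbb{E}\bigl[|e+\rho\mathbf{Z}|^q\mathbf{1}_B\bigr]\;\le\;(|e|+1)^q\rho^q\,\mathbb{E}\bigl[|\mathbf{Z}|^q\mathbf{1}_B\bigr]\;\le\;(|e|+1)^q\rho^q\sigma^{2-q},
\]
which you convert to $2\rho^q\sigma^{2-q}\,\|e+\chi^{1/2}\|_q^q$ via $(|e|+1)^q\le 2\|e+\chi^{1/2}\|_q^q$. You then claim this is dominated by the slack $\sigma^{2-q}$, but that requires $2\rho^q\,\|e+\chi^{1/2}\|_q^q\le 1$, which fails outright once $|e|$ is of order $1/\rho$ or larger. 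The only other slack available is the $\|e+\chi^{1/2}\|_q^q\,\mathbb{P}(B)$ you relinquished on the $A$-side, but the same layer-cake reasoning gives $\mathbb{P}(B)\le\rho^q\,\mathbb{E}[|\mathbf{Z}|^q\mathbf{1}_B]$, an \emph{upper} bound, so it cannot compensate for a term proportional to $\rho^q\,\mathbb{E}[|\mathbf{Z}|^q\mathbf{1}_B]$. To see the failure concretely, take $q=4$, $\rho=0.1$, and $\mathbf{Z}$ supported on $\{0,\pm M\}$ with $\Pr(\pm M)=1/(2M^2)$, $\sigma=1/M$, and $M=11$ (so $\rho M>1$ and $B$ has positive probability). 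For $e=100$ your combined residual evaluates to roughly $1.3\times 10^6$, while the total available slack $\|e+\chi^{1/2}\|_q^q\,\mathbb{P}(B)+\sigma^{2-q}$ is roughly $8.3\times 10^5$ --- yet the lemma is true for this $\mathbf{Z}$ (direct computation gives a margin of about $6\times 10^4$). The root cause is that the estimate $|e+\rho\mathbf{Z}|\le(|e|+1)\,|\rho\mathbf{Z}|$ is loose by a multiplicative factor $|\rho\mathbf{Z}|$ when $|e|$ is large, and nothing in your scheme offsets this $q$-th-power inflation.

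The paper circumvents this by normalizing the other variable ($e=1$ rather than $d=1$) and comparing second derivatives in $d$; the analogue of your case split becomes $E_2=\{|1+\rho d\mathbf{Z}|>|d\mathbf{Z}|\}$, on which one deduces $|\rho d\mathbf{Z}|(\rho^{-1}-1)\le 1$ and hence $|1+\rho d\mathbf{Z}|\le 1+\tfrac{1}{2q-1}$ is \emph{bounded by an absolute constant}. It is this boundedness --- not an $e$-dependent multiplicative comparison --- that renders the $E_2$-contribution absorbable, and the complementary event is handled directly by the $q$-th moment of $\mathbf{Z}$. If you want to retain the $d=1$ normalization and the raw-moment style, you would need a $B$-estimate whose $e$-dependence is at most $\|e+\chi^{1/2}\|_q^q\cdot\mathbb{P}(B)$ plus a purely $\sigma$-controlled error, and your current inequality does not deliver that.
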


\begin{proof}
  If $e = 0$ then the lemma is trivial.
  Therefore we may rescale and assume that $e=1$.
  It will be convenient to consider both sides of the inequality
  as functions of $d$: we write
  \[ f(d) =  \|1+\rho d\mathbf{Z}\|_q^q
   \quad \text{and} \quad
  g(d) = \|1 + d  \chi^{\frac{1}{2}}\|_q^q+\sigma^{2-q}d.
    \]
  As $f(0)=g(0)$, it suffices to show that $f'(0)=g'(0)$
  and $f''\le g''$ everywhere.

  Let us compute the derivatives. We note that the function
  $x\mapsto |x^q|$ has derivative $q|x|^{q-1}\mathrm{sign}(x)$,
  which is in turn continuously differentiable for $q>2$. Thus
  \begin{align*}
f' & =
\mathbb{E}[q\left|1+\rho d\mathbf{Z}\right|^{q-1}\mathrm{sign}(1+\rho d\mathbf{Z})\rho \mathbf{Z}]=
 \rho q\mathbb{E}[|1+\rho d\mathbf{Z}|^{q-1}\mathrm{sign}(1+\rho d\mathbf{Z}) \mathbf{Z}] \ \ \text{ and } \\
 f'' & = (q-1)q\rho^2 \mathbb{E}[|1+\rho d\mathbf{Z}|^{q-2} \mathbf{Z}^2].
\end{align*}
    Differentiating $g$ we obtain
  \begin{align*}
g' & =q\mathbb{E}\Big[\left|1 + d
  \chi^{\frac{1}{2}}\right|^{q-1}\mathrm{sign}(1+d\chi^{\frac{1}{2}})\chi^{\frac{1}{2}}\Big]+q\sigma^{2-q}d^{q-1}
\ \ \text{ and } \\
g'' & = q(q-1) \mathbb{E}\Big[\left| 1+d\chi^{\frac{1}{2}} \right|
      ^{q-2}\left(\chi^{\frac{1}{2}}\right) ^2 \Big]
      +q(q-1)d^{q-2}\sigma^{2-q} \ge
      q(q-1)/2 + q(q-1)d^{q-2}\sigma^{2-q}.
\end{align*}
Thus $g'(0)=f'(0)=0$ and it remains to show $f''\le g''$ everywhere.
Our strategy for bounding $f''$ is to decompose the expectation over
two complementary events $E_1$ and $E_2$,
where $E_1$ is the event that
$|1+ \rho d \mathbf{Z}| \le |d \mathbf{Z}|$
(and $E_2$ is its complementary event).
We write $f''=f''_1 +f''_2$, where each
\[ f''_i = (q-1)q\rho^2 \mathbb{E}[|1+\rho d\mathbf{Z}|^{q-2} \mathbf{Z}^2
       \mathbf{1}_{E_i}].        \]
First we note the bound
\[
f''_1 \le q(q-1) \rho^2 d^{q-2}\mathbb{E}[|\mathbf{Z}|^q]
\le q(q-1) d^{q-2}\sigma^{2-q}.
\]
Given the above lower bound on $g''$,
it remains to show $f''_2\le q(q-1)/2$.
On the event $E_2$ we have
\[
  |d \mathbf{Z}| \le |1+ \rho d \mathbf{Z}|\le 1 +|\rho d \mathbf{Z}|.
\]
Rearranging, we obtain $|\rho d \mathbf{Z}|(\rho^{-1}-1) \le 1.$
Since $\rho^{-1}\geq 2q$, we get
\[
  1+ |\rho d \mathbf{Z}| \le
 1 + \frac{1}{2q-1}.
  \]
    Using $\mathbb{E}[\mathbf{Z}^2]=1$ this yields
    \[
      f''_2
      \le q(q-1)\rho^2 \Big (1+\frac{1}{2q-1}\Big )^{q-2}\le
      e\rho^2q(q-1)\le q(q-1)/2
      .
    \]
    Hence $f''=f''_1 + f''_2 \le g''$ for any value of $d$.
    This completes the proof of the lemma.
    \end{proof}

We are now ready to show the replacement step.

\begin{lem} \label{lem:q-Replacement step}
$\mb{E}[(\mathrm{T}^{t-1}_{2q\rho,\rho} f_{t-1})^q]
\le \mb{E}[(\mathrm{T}^t_{2q\rho,\rho} f_t)^q]
+ \sigma^{2-q} \mb{E}[(\mathrm{T}^t_{2q\rho,\rho} ((\mathrm{D}_t f)_t) )^q]$.
\end{lem}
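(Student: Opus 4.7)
The plan is to follow the same structure as the proof of Lemma~\ref{lem:Replacement step}, substituting the single-variable estimate Lemma~\ref{lem:n=1} for the ad hoc fourth-power expansion used there. First, I decompose $f_t$ and $f_{t-1}$ along the $t$-th coordinate: setting $g = (\mathrm{D}_t f)_t = \sum_{S \ni t} a_S \chi^t_{S\sm\{t\}}$ and $h = \sum_{S\not\ni t} a_S \chi^t_S$, neither of which depends on the $t$-th coordinate, the only difference between $\chi^t_S$ and $\chi^{t-1}_S$ when $t \in S$ is that one carries $\chi^{1/2}_t$ while the other carries $\mathbf{Z}_t$, so
\[ f_t = \chi^{1/2}_t g + h, \qquad f_{t-1} = \mathbf{Z}_t g + h. \]
The operators $\mathrm{T}^t_{2q\rho,\rho}$ and $\mathrm{T}^{t-1}_{2q\rho,\rho}$ agree on functions of the coordinates other than $t$, so writing $d = \mathrm{T}^t g = \mathrm{T}^{t-1} g$ and $e = \mathrm{T}^t h = \mathrm{T}^{t-1} h$ (both independent of coordinate $t$), pulling out the rate assigned to coordinate $t$ gives
\[ \mathrm{T}^t_{2q\rho,\rho} f_t = 2q\rho\, \chi^{1/2}_t d + e, \qquad \mathrm{T}^{t-1}_{2q\rho,\rho} f_{t-1} = \rho\, \mathbf{Z}_t d + e. \]

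Next, I condition on all coordinates other than $t$; since $\mathbf{Z}_t$ and $\chi^{1/2}_t$ are independent of the remaining randomness, Fubini reduces the lemma to showing, for every $d, e \in \mathbb{R}$, the one-variable inequality
\[ \mb{E}\bigl[\, |e + \rho d\, \mathbf{Z}_t|^q \,\bigr] \le \mb{E}\bigl[\, |e + 2q\rho d\, \chi^{1/2}_t|^q \,\bigr] + \sigma^{2-q} d^q. \]
This is almost Lemma~\ref{lem:n=1}: I apply that lemma with its ``$d$'' set to $2q\rho d$ and its ``$\rho$'' set to $1/(2q)$, noting that its proof only needs $\rho^{-1} \ge 2q$ and so remains valid at this endpoint. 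The product $\tilde\rho\,\tilde d$ then equals $\rho d$ as required, and the conclusion is the desired inequality except that the error term carries an extra factor $(2q\rho)^q$. Since Theorem~\ref{thm:qth moment} assumes $\rho < 1/(2q^{1.5})$, we have $2q\rho < 1$ and hence $(2q\rho)^q \le 1$, absorbing the overshoot. Integrating the pointwise bound over the conditioned randomness then yields the lemma.

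The main obstacle, already handled inside Lemma~\ref{lem:n=1}, is the non-polynomial behaviour of $|\cdot|^q$ for non-integer $q$: one cannot simply expand the power and eliminate the cross term by AM--GM as in the $q=4$ proof, which is why Lemma~\ref{lem:n=1} is established via a calculus argument bounding the second derivative in $d$. Given that one-variable estimate, extending to $n$ variables here is purely bookkeeping: matching coefficients against $\tilde\rho\,\tilde d$ and $\tilde d$ in Lemma~\ref{lem:n=1}, and verifying that the constraint $\rho < 1/(2q^{1.5})$ in Theorem~\ref{thm:qth moment} is precisely what is needed to absorb the factor $(2q\rho)^q$ introduced by forcing $\tilde\rho = 1/(2q)$.
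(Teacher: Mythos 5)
Your proposal is correct and follows essentially the same route as the paper: decompose along coordinate $t$, pull out the noise rates, condition on the remaining coordinates, and invoke the single-variable estimate of Lemma~\ref{lem:n=1}. The paper compresses this to ``the lemma follows from Lemma~\ref{lem:n=1}, with $2qd$ in place of $d$,'' which is imprecise; your careful bookkeeping — taking $\tilde d = 2q\rho d$ and $\tilde\rho = 1/(2q)$, noting that the proof of Lemma~\ref{lem:n=1} only needs $\tilde\rho^{-1}\ge 2q$ so the closed endpoint is admissible, and then absorbing the extra factor $(2q\rho)^q\le 1$ coming from the error term under the standing hypothesis $\rho<1/(2q^{1.5})$ — is exactly the right way to make that sentence rigorous.
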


\begin{proof}
We write
\begin{align*}
f_t & = \chi^{1/2}_t g + h \ \ \text{ and } \ \
 f_{t-1} =  \chi^p_t g + h, \ \ \text{ where } \\
g & = (\mathrm{D}_t f)_t
= \sum_{S: t \in S} \hat{f}(S) \chi^t_{S \sm \{t\}}
= \sum_{S: t \in S} \hat{f}(S) \chi^{t-1}_{S \sm \{t\}}
= (\mathrm{D}_t f)_{t-1}, \ \ \text{ and } \\
h & = \mb{E}_{x_t \sim \mu_{1/2}} f_t
= \sum_{S: t \notin S} \hat{f}(S) \chi^t_S
= \sum_{S: t \notin S} \hat{f}(S) \chi^{t-1}_S
= \mb{E}_{\mathbf{Z}_t} f_{t-1}.
\end{align*}
We also write
\begin{align*}
\mathrm{T}^t_{2q\rho,\rho} f_t & = 2q\rho \chi^{1/2}_t d + e \ \ \text{ and } \ \
\mathrm{T}^{t-1}_{2q\rho,\rho} f_{t-1} = \rho \mathbf{Z}_t d + e, \ \ \text{ where } \\
d & = \mathrm{T}^t_{2q\rho,\rho} g = \mathrm{T}^{t-1}_{2q\rho,\rho} g
\ \ \text{ and } \ \
e = \mathrm{T}^t_{2q\rho,\rho} h = \mathrm{T}^{t-1}_{2q\rho,\rho} h.
\end{align*}
As before, we can calculate the expectations in the statement of the lemma
by conditioning on all coordinates other than $\mathbf{Z}_t$ and
$\chi_t^{\frac{1}{2}}$, so the lemma follows from Lemma
\ref{lem:n=1}, with $2qd$ in place of $d$.
\end{proof}

From now on, everything is similar to Section \ref{sec:hyp}.
We may apply the previous lemma inductively to obtain.

\begin{lem} \label{qhypinduct}
$\| \mathrm{T}^i_{2q\rho,\rho} f_i \|_q^q  \le \sum_{S \sub [n] \sm [i]}
\sigma^{(2-q)|S|}\| \mathrm{T}^n_{2 q\rho,\rho} ((\mathrm{D}_S f)_n) \|_q^q$
 for all $0 \le i \le n$.
\end{lem}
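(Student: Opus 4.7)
The plan is to prove this by induction on $n-i$, mirroring the proof of Lemma \ref{hypinduct} in the $q=4$ case, with Lemma \ref{lem:q-Replacement step} playing the role that Lemma \ref{lem:Replacement step} played there. As in that earlier argument, I will carry out the induction simultaneously for all admissible $f$, so that the induction hypothesis can be applied not only to $f$ itself but also to the derivative $\mathrm{D}_t f$.

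The base case is $i=n$, where $[n]\setminus [n]=\emptyset$ and the right-hand side reduces to the single term $S=\emptyset$, giving $\|\mathrm{T}^n_{2q\rho,\rho} f_n\|_q^q$, which equals the left-hand side. For the inductive step, assume the lemma holds for $i+1$ and set $t=i+1$. By Lemma \ref{lem:q-Replacement step},
\[
\|\mathrm{T}^i_{2q\rho,\rho} f_i\|_q^q
\le \|\mathrm{T}^t_{2q\rho,\rho} f_t\|_q^q
+ \sigma^{2-q}\,\|\mathrm{T}^t_{2q\rho,\rho}\bigl((\mathrm{D}_t f)_t\bigr)\|_q^q.
\]
Apply the induction hypothesis to each of the two terms on the right: to $f$ to bound the first, and to $\mathrm{D}_t f$ (whose ``interpolant'' at stage $t$ is exactly $(\mathrm{D}_t f)_t$, by the same Fourier identities used in the proof of Lemma \ref{lem:q-Replacement step}) to bound the second. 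This produces
\[
\sum_{S \subset [n]\setminus [t]} \sigma^{(2-q)|S|}\,\|\mathrm{T}^n_{2q\rho,\rho}((\mathrm{D}_S f)_n)\|_q^q
\;+\;
\sigma^{2-q}\!\!\sum_{S \subset [n]\setminus [t]} \sigma^{(2-q)|S|}\,\|\mathrm{T}^n_{2q\rho,\rho}((\mathrm{D}_S \mathrm{D}_t f)_n)\|_q^q.
\]

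The bookkeeping step is to combine these two sums into a single sum over $S \subset [n]\setminus [i]$. Since $t=i+1$, every subset $S \subset [n]\setminus [i]$ either avoids $t$ (in which case $S \subset [n]\setminus [t]$ and the term appears in the first sum) or contains $t$ (in which case $S\setminus\{t\} \subset [n]\setminus [t]$, and writing $\mathrm{D}_S = \mathrm{D}_{S\setminus\{t\}}\mathrm{D}_t$ together with the factor $\sigma^{2-q}$ absorbing the extra $|S|=|S\setminus\{t\}|+1$ in the exponent matches the term in the second sum). This reindexing is the only step requiring care, and it is purely formal, so I do not anticipate any real obstacle.

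The main (and only substantive) difficulty has already been isolated in Lemma \ref{lem:q-Replacement step} (which itself rests on Lemma \ref{lem:n=1}, the one-variable replacement inequality); once that replacement step is in hand, the present lemma is a routine induction. The factor $2q$ in the noise rate is chosen precisely so that the hypothesis $\rho < 1/(2q)$ needed in Lemma \ref{lem:n=1} is met at every stage of the interpolation, and no further conditions on $\rho$ arise.
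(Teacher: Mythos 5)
Your proof is correct and follows exactly the same route the paper intends: the paper does not write out the induction for Lemma \ref{qhypinduct} at all (it simply says ``we may apply the previous lemma inductively,'' deferring to the $q=4$ argument in Lemma \ref{hypinduct}), and your write-up supplies precisely that induction on $n-i$, applying Lemma \ref{lem:q-Replacement step} at stage $t=i+1$ to both $f$ and $\mathrm{D}_t f$ and then reindexing the two sums over $S\subset[n]\setminus[t]$ into one over $S\subset[n]\setminus[i]$ via the dichotomy $t\notin S$ versus $t\in S$, using $\mathrm{D}_{S\setminus\{t\}}\mathrm{D}_t = \mathrm{D}_S$ and the fact that $\sigma^{2-q}\cdot\sigma^{(2-q)|S\setminus\{t\}|}=\sigma^{(2-q)|S|}$. (One small side remark: your closing sentence attributing the factor $2q$ in the noise rate to the condition $\rho<1/(2q)$ of Lemma \ref{lem:n=1} is not quite the right explanation — the factor $2q$ appears because of the rescaling $d\mapsto 2qd$ used inside the proof of Lemma \ref{lem:q-Replacement step} — but this is commentary, not a step of your proof, and the argument itself is sound.)
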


In particular, recalling that
$\mathrm{T}^0_{2 q \rho,\rho}=\mathrm{T}_{\rho}$ on the original function and
$\mathrm{T}^n_{2 q \rho,\rho}=\mathrm{T}_{2 q \rho}$ on $\mu_{1/2}$,
the case $i=0$ of Lemma \ref{qhypinduct} is as follows.

\begin{prop}
\label{prop:q-Reason for hypercontractivity}
$\| \mathrm{T}_{\rho} f \|_q^q  \le \sum_{S \sub [n]}
 \sigma^{(2-q)|S|} \| \mathrm{T}_{2 q \rho} ((\mathrm{D}_S f)_n) \|_q^q$.
\end{prop}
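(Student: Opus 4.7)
My plan is very short, since the statement is essentially a direct specialization of the preceding lemma. The proposition is exactly the case $i=0$ of Lemma \ref{qhypinduct}, after unwinding two identifications: first, by definition $\mathrm{T}^0_{2q\rho,\rho}$ acts on $f_0 = f$ in the original product space (with the $\mathbf{Z}_i$'s) as the noise operator $\mathrm{T}_\rho$; second, $\mathrm{T}^n_{2q\rho,\rho}$ acts on $f_n \in L^2(\{0,1\}^n,\mu_{1/2})$ as $\mathrm{T}_{2q\rho}$. The indexing ``$S \sub [n] \sm [0]$'' is simply ``$S \sub [n]$,'' so no rewriting of the sum is needed beyond noting these identifications.

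So the real content lies in Lemma \ref{qhypinduct}, and I would prove that by induction on $n - i$, exactly mirroring the argument for Lemma \ref{hypinduct} in the $q=4$ setting. The base case $i = n$ is trivial: the right-hand sum collapses to its $S = \emptyset$ term and both sides equal $\| \mathrm{T}_{2q\rho} f_n \|_q^q$. For the induction step I apply the replacement step (Lemma \ref{lem:q-Replacement step}) with $t = i+1$ to obtain
\[
\| \mathrm{T}^i_{2q\rho,\rho} f_i \|_q^q \le \| \mathrm{T}^t_{2q\rho,\rho} f_t \|_q^q + \sigma^{2-q} \| \mathrm{T}^t_{2q\rho,\rho} (\mathrm{D}_t f)_t \|_q^q,
\]
then invoke the induction hypothesis separately on $f$ and on $\mathrm{D}_t f$ (each at index $t$ rather than $i$), and reorganize the two resulting sums by grouping subsets of $[n] \sm [i]$ according to whether they contain $t$ (the derivative-of-$\mathrm{D}_t f$ term supplies exactly those subsets) or not (the $f_t$ term supplies those).

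The step carrying all the real work has already been absorbed into Lemma \ref{lem:q-Replacement step}, whose proof in turn reduces to the single-variable moment estimate of Lemma \ref{lem:n=1}. Hence I anticipate no new obstacle at the level of the proposition itself; it is a bookkeeping consequence of the inductive engine already in place. The only mildly delicate points worth double-checking are that the derivative and the level-$t$ interpolation commute correctly, so that $(\mathrm{D}_S f)_t$ equals $\mathrm{D}_S(f_t)$ when $S \cap [t] = \emptyset$, and that the coefficient $2q$ in $\mathrm{T}^t_{2q\rho,\rho}$ is preserved across the inductive application to $\mathrm{D}_t f$.
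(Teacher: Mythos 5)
Your proposal is correct and matches the paper's own route exactly: the proposition is just the $i=0$ case of Lemma \ref{qhypinduct}, which in turn is proved by induction on $n-i$ by mirroring the argument of Lemma \ref{hypinduct}, with Lemma \ref{lem:q-Replacement step} supplying the replacement step. The paper in fact compresses this to a single sentence ("everything is similar to Section \ref{sec:hyp}"); you have simply spelled out the bookkeeping, including the correct regrouping of subsets of $[n]\setminus[i]$ by whether they contain $t=i+1$, and the check that $(\mathrm{D}_S f)_t = \mathrm{D}_S(f_t)$ when $S\cap[t]=\emptyset$, which is exactly what the paper's identification $g=(\mathrm{D}_t f)_t=(\mathrm{D}_t f)_{t-1}$ in Lemma \ref{lem:q-Replacement step} is tracking.
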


The $q$-norms on the right hand side of
Proposition \ref{prop:q-Reason for hypercontractivity}
are with respect to the uniform measure $\mu_{1/2}$,
where we can apply standard hypercontractivity with noise rate $ \le 1/\sqrt{q-1}$
to obtain \[
  \| \mathrm{T}_{2 q \rho} ((\mathrm{D}_S f)_n) \|_q^q
\le \| (\mathrm{D}_S f)_n \|_2^q = \| \mathrm{D}_S f \|_2^q.
 %=\sS^{\frac{q}{2}|S|} \mathrm{I}_S(f)^{\frac{q}{2}}.
\]

This completes the proof of Theorem \ref{thm:qth moment}.

In the case where the $\mathbf{Z}_i$ have different $q$th
moments, the proof can be adjusted to give a better upper bound.
We write
\begin{equation} \label{eq:Zgen}
\mathbb{E}[\mathbf{Z}_i^q] = \sigma_i^{2-q}, \quad
\sigma_S=\prod_{i\in S}\sigma_i
\ \ \text{ and } \ \
\mathrm{I}_S[f]=\|\frac{1}{\sigma_S}\mathrm{D}_S[f]\|_2^2.
\end{equation}
The proof of Theorem \ref{thm:qth moment}
yields the following variant of Theorem \ref{thm:hypref}.

\begin{thm}\label{thm:qth moment with different norms}
  Let $q\ge 2$, let $\rho\le (2q)^{-1.5}$, and let $f =\sum
  a_S\prod_{i\in S}\mathbf{Z}_i$ with $Z_i$ as in \eqref{eq:Zgen}. Then
  \[
    \|\mathrm{T}_{\rho}f\|_q^q \le \sum
    _{S\subset [n]}\sigma_S^{2-q}\|\mathrm{D}_S[f]\|_2^q.
    \]
  \end{thm}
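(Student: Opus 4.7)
The plan is to repeat essentially verbatim the proof of Theorem \ref{thm:qth moment}, with the sole change that we track the per-coordinate factor $\sigma_i$ rather than a uniform $\sigma$. This is natural because Lemma \ref{lem:n=1}, which is the only truly analytic step in the argument, is already stated for a single random variable $\mathbf{Z}$ with its own parameter $\sigma$; the factor $\sigma_S^{2-q}$ in the conclusion arises simply by accumulating $\sigma_i^{2-q}$ each time we swap out coordinate $i$.

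First, I would reproduce the interpolation from Section \ref{sec:genhyp}: for $0 \le t \le n$, set
\[
f_t \;=\; \sum_{S \subseteq [n]} a_S\,\chi_S^t, \qquad
\chi_S^t \;=\; \prod_{i \in S \cap [t]} \chi_i^{1/2} \prod_{i \in S \setminus [t]} \mathbf{Z}_i,
\]
so that $f_0 = f$, and $f_n \in L^2(\{0,1\}^n, \mu_{1/2})$. Orthonormality of the $\chi_S^t$ gives $\|f_t\|_2 = \|f\|_2$, and the noise operators $\mathrm{T}^t_{\rho',\rho}$ are defined coordinatewise exactly as in that section. The key step is then a version of Lemma \ref{lem:q-Replacement step} with $\sigma$ replaced by $\sigma_t$:
\[
\mathbb{E}\bigl[(\mathrm{T}^{t-1}_{2q\rho,\rho} f_{t-1})^q\bigr] \;\le\; \mathbb{E}\bigl[(\mathrm{T}^{t}_{2q\rho,\rho} f_t)^q\bigr] + \sigma_t^{2-q}\,\mathbb{E}\bigl[(\mathrm{T}^{t}_{2q\rho,\rho}((\mathrm{D}_t f)_t))^q\bigr].
\]
This is established exactly as before by conditioning on all coordinates other than the $t$-th, writing $\mathrm{T}^{t-1}_{2q\rho,\rho} f_{t-1} = \rho \mathbf{Z}_t d + e$ and $\mathrm{T}^{t}_{2q\rho,\rho} f_t = 2q\rho\,\chi_t^{1/2} d + e$, and invoking Lemma \ref{lem:n=1} applied to $\mathbf{Z}_t$, which contributes precisely the factor $\sigma_t^{2-q}$.

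Iterating this replacement step by step, inductively on $n - i$ as in Lemma \ref{qhypinduct} (applied simultaneously to $f$ and its derivatives $\mathrm{D}_T f$), I would obtain
\[
\|\mathrm{T}_\rho f\|_q^q \;\le\; \sum_{S \subseteq [n]} \Big(\prod_{i \in S} \sigma_i^{2-q}\Big)\,\|\mathrm{T}_{2q\rho}\bigl((\mathrm{D}_S f)_n\bigr)\|_q^q \;=\; \sum_{S \subseteq [n]} \sigma_S^{2-q}\,\|\mathrm{T}_{2q\rho}((\mathrm{D}_S f)_n)\|_q^q,
\]
where the outer operator on the right acts on the uniform cube. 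Since the choice $\rho \le (2q)^{-3/2}$ guarantees $2q\rho \le 1/\sqrt{q-1}$, the standard Beckner--Bonami $(q,2)$-hypercontractive inequality on $\mu_{1/2}$ yields $\|\mathrm{T}_{2q\rho}((\mathrm{D}_S f)_n)\|_q \le \|(\mathrm{D}_S f)_n\|_2 = \|\mathrm{D}_S f\|_2$, completing the proof. There is essentially no new obstacle here beyond the bookkeeping: all the analytic work is already packaged in Lemma \ref{lem:n=1}, and the only thing to verify is that each application of it at coordinate $t$ consistently picks up the coordinate's own parameter $\sigma_t$ rather than a global one.
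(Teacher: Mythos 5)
Your proposal is correct and follows exactly the paper's intended argument: the paper itself states that ``the proof of Theorem \ref{thm:qth moment} yields'' this variant, and your bookkeeping of the per-coordinate $\sigma_t$ factor picked up at each application of Lemma \ref{lem:n=1} is precisely the adjustment required. No genuine difference in approach.
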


  The following variant of Theorem \ref{thm:Hypercontractivity}
  follows from Theorem \ref{thm:qth moment with different norms}. The proof is
  similar to the one given in Section \ref{sec:hyp}, where Theorem
  \ref{thm:Hypercontractivity} is deduced from Theorem \ref{thm:hypref}.

  \begin{thm}\label{thm:Hypercontractivity with different norms}
Let $q>2$, $\beta>0$ and $\rho \le (2q)^{-1.5}$.
Suppose $f=\sum_{S \sub [n]}a_S\prod_{i\in S}\mathbf{Z}_i$
 with $Z_i$ as in \eqref{eq:Zgen}
has $\mathrm{I}_S[f] \le \beta \|f\|_2^2$
for all $S\subset    [n]$. Then
    \[
      \| \mathrm{T}_{\rho}f \|_q\le \beta^{\frac{q-2}{2q}}\|f\|_2.
      \]
    \end{thm}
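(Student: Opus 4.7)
The plan is to mirror the derivation of Theorem \ref{thm:hyp+} from Theorem \ref{thm:hypref} in Section \ref{sec:hyp}. The key move is to apply the moment inequality (now Theorem \ref{thm:qth moment with different norms}) not to $f$ itself but to the slightly noised function $\mathrm{T}_{\rho'}f$ for a suitable $\rho'<1$, so that after one factor of $\|\mathrm{D}_S f\|_2^2$ is absorbed into the generalised-influence assumption the remaining sum collapses to $\|f\|_2^2$ by Parseval.

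Concretely, fix $\rho'=1/\sqrt{2}$ and choose $\rho''$ so that Theorem \ref{thm:qth moment with different norms} applies, i.e.\ $\rho''\le(2q)^{-1.5}$. Applying that theorem to $\mathrm{T}_{\rho'}f$ and using the commutation $\mathrm{D}_S\mathrm{T}_{\rho'}=(\rho')^{|S|}\mathrm{T}_{\rho'}\mathrm{D}_S$ (immediate from the expansions in Section \ref{subsec:multilinear}) gives
\[
\|\mathrm{T}_{\rho''\rho'}f\|_q^q
\le\sum_{S\subset[n]}\sigma_S^{2-q}(\rho')^{q|S|}\|\mathrm{T}_{\rho'}\mathrm{D}_S f\|_2^q.
\]
Split $\|\mathrm{T}_{\rho'}\mathrm{D}_S f\|_2^q=(\|\mathrm{T}_{\rho'}\mathrm{D}_S f\|_2^2)^{(q-2)/2}\cdot\|\mathrm{T}_{\rho'}\mathrm{D}_S f\|_2^2$ and bound the first factor by the hypothesis via $\|\mathrm{T}_{\rho'}\mathrm{D}_S f\|_2^2\le\|\mathrm{D}_S f\|_2^2=\sigma_S^2\mathrm{I}_S[f]\le\sigma_S^2\beta\|f\|_2^2$. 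The $\sigma_S$ powers then cancel and we obtain
\[
\|\mathrm{T}_{\rho''\rho'}f\|_q^q
\le\beta^{(q-2)/2}\|f\|_2^{q-2}\sum_S(\rho')^{q|S|}\|\mathrm{T}_{\rho'}\mathrm{D}_S f\|_2^2.
\]

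Expanding $\|\mathrm{T}_{\rho'}\mathrm{D}_S f\|_2^2=\sum_{T\supset S}(\rho')^{2(|T|-|S|)}a_T^2$ by independence of the $\mathbf{Z}_i$ and swapping the order of summation transforms the inner double sum into $\sum_T a_T^2\bigl((\rho')^2(1+(\rho')^{q-2})\bigr)^{|T|}$. The choice $\rho'=1/\sqrt{2}$ makes $(\rho')^2(1+(\rho')^{q-2})=\tfrac12+2^{-q/2}<1$ for every $q>2$, so this is at most $\sum_T a_T^2=\|f\|_2^2$, yielding $\|\mathrm{T}_{\rho''\rho'}f\|_q\le\beta^{(q-2)/(2q)}\|f\|_2$. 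The stated bound follows for all $\rho\le\rho''\rho'$ by the $L^q$-contractivity of $\mathrm{T}_\rho$, after a harmless adjustment of the universal constant in the hypothesis $\rho\le(2q)^{-1.5}$. The only real subtlety is the choice of $\rho'$: the interpolation is essential, because applying Theorem \ref{thm:qth moment with different norms} directly to $f$ and using the assumption naively would leave a divergent factor $\sum_S\sigma_S^2$, and the role of $\rho'$ is precisely to produce the geometric decay that lets Parseval close the estimate.
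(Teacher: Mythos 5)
Your proposal is correct and is exactly the argument the paper has in mind: the paper explicitly says the proof of Theorem \ref{thm:Hypercontractivity with different norms} "is similar to the one given in Section \ref{sec:hyp}, where Theorem \ref{thm:Hypercontractivity} is deduced from Theorem \ref{thm:hypref}", i.e.\ apply the moment inequality to a partly-noised $\mathrm{T}_{\rho'}f$, use the influence hypothesis to absorb $(q-2)/q$ of each $q$th power, and let the resulting geometric decay collapse the double sum via Parseval. Your choice $\rho'=1/\sqrt{2}$ matches the paper's choice in the $q=4$ case, and the verification that $(\rho')^2\bigl(1+(\rho')^{q-2}\bigr)=\tfrac12+2^{-q/2}<1$ for $q>2$ is right.

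On the constant you worried about: there is in fact no adjustment needed. Theorem \ref{thm:qth moment} holds for $\rho<\tfrac{1}{2q^{1.5}}$, and this is the constant one should feed in as $\rho''$ (the paper merely re-stated it more weakly as $(2q)^{-1.5}$ in Theorem \ref{thm:qth moment with different norms}). With $\rho''=\tfrac{1}{2q^{1.5}}$ and $\rho'=\tfrac{1}{\sqrt{2}}$ one gets $\rho''\rho'=\tfrac{1}{2\sqrt{2}\,q^{1.5}}=(2q)^{-1.5}$, exactly the range claimed in the theorem. So the argument closes without slack.

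One small clean-up: the "divergent factor" you name as $\sum_S\sigma_S^2$ is what appears if one plugs the hypothesis into the \emph{full} $q$th power; if one instead tries the $(q-2)/q$ split directly on $f$ (without the $\mathrm{T}_{\rho'}$ interpolation), the obstruction is rather $\sum_T 2^{|T|}a_T^2$. Either way your diagnosis that the interpolation is what produces the needed geometric decay is correct.
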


    Finally, we state the following variant of Lemma
    \ref{lem:applying_hypercontractivity}, which is easy to deduce
    from Theorem \ref{thm:Hypercontractivity with different norms}.

    \begin{lem}\label{lem:q-applying_hypercontractivity}
     Let $q>2$ and $\delta>0$.
     Suppose $f=\sum_{S \sub [n]}a_S\prod_{i\in S}\mathbf{Z}_i$
   with $Z_i$ as in \eqref{eq:Zgen}
   has $\mathrm{I}_S[f] \le \delta $ for
    all $|S| \le r$. Then
    \[
      \| f \|_q \le (2q)^{1.5 r}\delta^{\frac{q-2}{2q}}\|f\|_2^{\frac{2}{q}}.
      \]
      \end{lem}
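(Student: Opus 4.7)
The plan is to mimic, almost verbatim, the proof of Lemma \ref{lem:applying_hypercontractivity} (the $q=4$ case), but with Theorem \ref{thm:Hypercontractivity with different norms} playing the role of Theorem \ref{thm:Hypercontractivity}. The strategy is: realize $f$ as $\mathrm{T}_{\rho}h$ for a suitable ``inverse-noise'' function $h$ and noise rate $\rho=(2q)^{-3/2}$, check that the generalised influences of $h$ are small, and then apply the generalised hypercontractive theorem to $h$. Throughout I will take $f$ to have degree at most $r$ (this is how the lemma is used in the main text, and it is the implicit reading needed for the $\|f\|_2^{2/q}$ factor to make sense; for arbitrary $f$ one applies the statement to $f^{\le r}$ after observing $\mathrm{I}_S[f^{\le r}]\le \mathrm{I}_S[f]$ as in Lemma \ref{obs}).

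First, set $\rho=(2q)^{-3/2}$ and define
\[
  h=\sum_{|T|\le r}\rho^{-|T|} a_T\prod_{i\in T}\mathbf{Z}_i,
\]
so that $f=\mathrm{T}_{\rho}h$. Using the Fourier expression for $\mathrm{I}_S$ in the generalised setting, one reads off
\[
  \mathrm{I}_S[h]=\sigma_S^{-2}\sum_{T\supset S,\,|T|\le r}\rho^{-2|T|}a_T^2\,\sigma_{T\setminus S}^2
  \;\le\;\rho^{-2r}\,\mathrm{I}_S[f]\;\le\;\rho^{-2r}\delta
\]
for $|S|\le r$, while $\mathrm{I}_S[h]=0$ for $|S|>r$. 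Hence all generalised influences of $h$ are at most $\beta\,\|h\|_2^2$ with $\beta=\rho^{-2r}\delta/\|h\|_2^2$.

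Second, apply Theorem \ref{thm:Hypercontractivity with different norms} to $h$ at this noise rate $\rho$, giving
\[
  \|f\|_q=\|\mathrm{T}_{\rho}h\|_q\le \beta^{(q-2)/(2q)}\|h\|_2
  =\rho^{-r(q-2)/q}\delta^{(q-2)/(2q)}\|h\|_2^{2/q}.
\]
Finally, by Parseval $\|h\|_2^2=\sum_{|T|\le r}\rho^{-2|T|}a_T^2\le \rho^{-2r}\|f\|_2^2$, so $\|h\|_2^{2/q}\le \rho^{-2r/q}\|f\|_2^{2/q}$ and the exponents on $\rho$ combine into a single factor $\rho^{-r}=(2q)^{3r/2}$, yielding
\[
  \|f\|_q\le (2q)^{3r/2}\,\delta^{(q-2)/(2q)}\,\|f\|_2^{2/q},
\]
which is exactly the claimed bound.

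There is no substantive obstacle: the argument is a routine translation of the $q=4$ proof, and the only points requiring care are (a) keeping the ``inverse-noise'' rescaling $\rho^{-|T|}$ consistent across the computations of $\mathrm{I}_S[h]$ and $\|h\|_2$, and (b) verifying that the exponents of $\rho$ from hypercontractivity and from Parseval recombine to give the clean prefactor $(2q)^{1.5r}$. Both are purely bookkeeping.
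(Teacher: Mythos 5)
Your proof is correct and is precisely the transposition of the paper's proof of Lemma \ref{lem:applying_hypercontractivity} to general $q$, using Theorem \ref{thm:Hypercontractivity with different norms} in place of Theorem \ref{thm:Hypercontractivity}; this is exactly what the paper intends when it says the lemma is ``easy to deduce.'' The only slip is cosmetic: the displayed formula for $\mathrm{I}_S[h]$ should read $\sigma_S^{-2}\sum_{T\supset S,\,|T|\le r}\rho^{-2|T|}a_T^2$ with no extra $\sigma_{T\setminus S}^2$ factor (the $\mathbf{Z}_i$ are unit variance so the products $\prod_{j\in T\setminus S}\mathbf{Z}_j$ are already orthonormal), but since $\sigma_i\le 1$ the spurious factor only weakens the intermediate expression and the chain of inequalities and the final bound are unaffected.
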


\subsection{A hypercontractive inequality for product spaces}
Now we consider the setting of a general discrete
product space $(\Oo,\nu) = \prod_{t=1}^n (\Oo_t,\nu_t)$.
We assume
$p_t=\min_{\oO_t \in \Oo_t} \nu_t(\oO_t) \in (0,1/2)$
for each $t \in [n]$, and we write $p=\min_t{p_t}$.
We recall the projections $\mathrm{E}_J$ on $L^2(\Oo, \nu)$ defined
by $(\mathrm{E}_J f)(\oO) = \mb{E}_{\oO_J}[ f(\oO) \mid \oO_{\ov{J}} ]$,
the generalised Laplacians $\mathrm{L}_S$
defined by composing $\mathrm{L}_t$ for all $t \in S$,
where $\mathrm{L}_t f = f - \mathrm{E}_t f$,
and the generalised influences
$\mathrm{I}_S(f) = \mb{E}[\mathrm{L}_S(f)^2] \prod_{i \in S} \sS_i^{-2}$,
where $\sS_i^2=p_i(1-p_i)$.

We will require the theory of orthogonal decompositions
in product spaces, which we summarise following the
exposition in \cite[Section 8.3]{o2014analysis}.
For $f\in L^2(\Oo,\nu)$ and $J,S \sub [n]$
we write $f^{\sub J} = \mathrm{E}_{\ov{J}} f $ and define
$f^{=S} = \sum_{J \sub S} (-1)^{|S \sm J|} f^{\sub J}$
(inclusion-exclusion for
$f^{\sub J} = \sum_{S \sub J} f^{=S}$).
This decomposition is known as the Efron--Stein decomposition
\cite{efron1981jackknife}. The key properties of $f^{=S}$ are that it only depends
on coordinates in $S$ and it is orthogonal to any function
that depends only on some set of coordinates not containing $S$;
in particular, $f^{=S}$ and $f^{=S'}$ are orthogonal for $S \ne S'$.
We note that $f = f^{\sub [n]} = \sum_S f^{=S}$.
We have similar Plancherel / Parseval relations
as for Fourier decompositions, namely
$\bgen{f,g} =  \sum_S f^{=S} g^{=S}$,
so $\mb{E}[f^2] = \sum_S (f^{=S})^2$.

Our goal in this section is to prove an hypercontractive inequality
for the Efron--Stein decomposition in the spirit of Theorem
\ref{thm:hypref}. The noise operator is defined by
$\mathrm{T}_\rho[f]=\sum_{S\sub [n]}\rho^{|S|}f^{=S}$. It also has a
combinatorial interpretation, which is similar to the usual one on the
$p$-biased setting. Given $x\in\Omega$, a sample $\mathbf{y}\sim
N_\rho(x)$ is chosen by
independently setting $y_i$ to $x_i$ with probability $\rho$ and
resampling it from $(\Omega_i,\nu_i)$ with probability $1-\rho$.
In the general product space setting there are no good analogs to
$\mathrm{D}_i[f]$ and $\mathrm{D}_S(f)$, and we
instead work with the Laplacians,
which have similar Fourier formulas:
$\mathrm{L}_i[f]=\sum_{S:\,i\in S}f^{=S}$, and $\mathrm{L}_T[f]=\sum
_{S:\,T\subset S}f^{=S}$. In the special case where $\Omega_i=\{0,1\}$
we have $\|\mathrm{L}_S[f]\|_2 =\|\mathrm{D}_S[f]\|_2$. It will be
convenient to write
$\sigma_S=\prod_{i\in S} \sigma_i$.

The main result of this section is the following theorem.
\begin{thm}\label{thm:es}
Let $f\in L^2(\Omega, \nu )$, let $q>2$ be an even integer, and let $\rho \le
\frac{1}{8q^{1.5}}$. Then
  \[
    \| \mathrm{T}_{\rho}f \|^q_q  \le
    \sum_{S\sub [n]} \sigma_S^{2-q}
    \| \mathrm{L}_S[f] \|_2^q.
    \]
  \end{thm}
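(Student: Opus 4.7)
The plan is to extend the Lindeberg-style replacement method of Section~\ref{subsec:multilinear} to the abstract Efron--Stein decomposition. The main new technical issue is that each factor $(\Omega_i,\nu_i)$ may have more than two atoms, so $f$ cannot be written as a multilinear polynomial in $n$ independent single random variables as in Theorem~\ref{thm:qth moment with different norms}. To circumvent this, I pick for each coordinate $i$ an orthonormal basis $\{u_{i,0}=1,u_{i,1},\ldots,u_{i,|\Omega_i|-1}\}$ of $L^2(\Omega_i,\nu_i)$ and expand $f=\sum_\alpha a_\alpha\prod_i u_{i,\alpha_i}$. The key pointwise estimate powering the whole argument is the reproducing-kernel identity $\sum_{j\ne 0}u_{i,j}(\omega)^2=1/\nu_i(\omega)-1\le 1/\sigma_i^2$ (valid since $p_i\le 1/2$); by Cauchy--Schwarz this gives $\|g\|_\infty\le \|g\|_2/\sigma_i$ for every mean-zero $g\in L^2(\Omega_i,\nu_i)$, and hence $\|g\|_k^k\le \sigma_i^{2-k}\|g\|_2^k$ for all $k\ge 2$.

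For each $i$ pick $k_i$ with $2^{k_i}\ge |\Omega_i|$ and identify each non-trivial basis element $u_{i,j}$ with a distinct uniform character $\chi^{1/2}_{i,j}$ on $(\{0,1\}^{k_i},\mu_{1/2})$; this yields an $L^2$-isometric embedding sending $1$ to $1$ and the mean-zero subspace of $L^2(\Omega_i,\nu_i)$ into the degree-one part of $L^2(\{0,1\}^{k_i},\mu_{1/2})$. For $0\le t\le n$ let $f_t$ be obtained by replacing $u_{i,\alpha_i}$ with $\chi^{1/2}_{i,\alpha_i}$ for all $i\le t$ in the expansion of $f$, so that $f_0=f$, $f_n$ lives on a single uniform cube, and $\|\mathrm{L}_S[f_t]\|_2=\|\mathrm{L}_S[f]\|_2$ for all $S\sub [n]$ and every $t$. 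Define hybrid noise operators $\mathrm{T}^t_{\rho',\rho}$ that apply noise rate $\rho'$ on the first $t$ (now uniform) blocks and rate $\rho$ on the remaining $n-t$ (original) blocks, exactly as in Section~\ref{sec:hyp}.

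The heart of the argument is a one-coordinate replacement lemma, analogous to Lemmas~\ref{lem:q-Replacement step} and \ref{lem:n=1}: for some absolute constant $c$,
\[
\|\mathrm{T}^{t-1}_{cq\rho,\rho}f_{t-1}\|_q^q \le \|\mathrm{T}^{t}_{cq\rho,\rho}f_{t}\|_q^q + \sigma_t^{2-q}\,\|\mathrm{T}^{t}_{cq\rho,\rho}(\mathrm{L}_t f)_t\|_q^q .
\]
Conditioning on all coordinates other than the $t$-th reduces this to a single-coordinate inequality of the form $\|e+\rho g\|_q^q\le \|e+c\rho\tilde g\|_q^q+\sigma_t^{2-q}\|g\|_2^q$, where $e$ is a constant, $g$ is a mean-zero function on $(\Omega_t,\nu_t)$, and $\tilde g$ is its image on $(\{0,1\}^{k_t},\mu_{1/2})$ with $\|\tilde g\|_2=\|g\|_2$. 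Since $q$ is an even integer, I would expand both $q$-th powers as polynomials in $\rho$: the moment contributions of order $\le 2$ match identically, odd-moment terms are handled by the Cauchy--Schwarz/AM--GM trick from the proof of Lemma~\ref{lem:n=1}, and even moments of order $k\ge 4$ of $\rho g$ are absorbed into the error term using $\|g\|_k^k\le \sigma_t^{2-k}\|g\|_2^k$. Iterating this step across $t=1,\ldots,n$ (as in Lemma~\ref{qhypinduct}) and then invoking standard hypercontractivity on the uniform cube for the residual noise rate $cq\rho\le 1/\sqrt{q-1}$ finishes the proof; the constraint $\rho\le 1/(8q^{1.5})$ is exactly what is needed so that all the absolute constants acquired along the way can be absorbed.

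The main obstacle is establishing the single-coordinate replacement lemma in this general setting. In Lemma~\ref{lem:n=1} the ``derivative part'' is a scalar multiple of one random variable, so the inequality reduces to a clean 1-parameter analysis of two smooth functions. Here $g$ can be an arbitrary mean-zero function on a general finite space whose odd moments need not vanish (unlike their symmetric uniform-cube images) and whose pointwise range may far exceed $\|g\|_2$. The universal estimate $\|g\|_\infty\le \|g\|_2/\sigma_t$ for mean-zero functions is the essential tool that allows the event-split argument ($|\rho g|\le 1$ versus its complement) of Lemma~\ref{lem:n=1} to still go through, at the price of slightly worse absolute constants than in Theorem~\ref{thm:qth moment}.
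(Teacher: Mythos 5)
The approach you propose — directly Lindeberg-interpolating from $\prod_i(\Omega_i,\nu_i)$ to a uniform cube by embedding each $L^2(\Omega_i,\nu_i)$ isometrically via an orthonormal basis, then running the replacement argument of Section~\ref{subsec:multilinear} — is genuinely different from the paper's proof, which never interpolates one coordinate at a time. Instead, the paper replaces $f$ \emph{all at once} by the multilinear proxy $\tilde f=\sum_S\|f^{=S}\|_2\chi_S$ (Proposition~\ref{prop:reduction}), proving $\|f\|_q\le\|\tilde f\|_q$ by expanding $\mathbb{E}[f^q]$ as a $q$-fold sum over Efron--Stein components and comparing term by term via Lemma~\ref{lem:tough upper bound}; the multilinear Theorem~\ref{thm:qth moment with different norms} then finishes. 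Your key ingredients — the reproducing-kernel bound $\|g\|_\infty\le\|g\|_2/\sigma_i$ for mean-zero $g$, and the symmetry lower bound $\mathbb{E}\big[|1+d\tilde g_0|^{q-2}\tilde g_0^2\big]\ge\tfrac12$ (valid because a degree-1 function on $\mu_{1/2}$ is a symmetric random variable) — are exactly what is needed to run the event-split argument of Lemma~\ref{lem:n=1} for an arbitrary mean-zero $g$. An incidental bonus of your route is that, unlike Proposition~\ref{prop:reduction}, it would not require $q$ to be an even integer.

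There is, however, one precise error in the outline that must be repaired. You claim that conditioning on the coordinates outside $t$ turns the error term $\sigma_t^{2-q}\|\mathrm{T}^t_{cq\rho,\rho}(\mathrm{L}_tf)_t\|_q^q$ into $\sigma_t^{2-q}\|g\|_2^q$. It does not. Unlike the derivative $\mathrm{D}_t$ in the multilinear setting, the Laplacian $\mathrm{L}_t$ does \emph{not} kill the dependence on coordinate $t$; after conditioning, $\mathrm{T}^t_{cq\rho,\rho}(\mathrm{L}_tf)_t$ equals $cq\rho\,\tilde g(x_t)$, so its conditional $q$-norm is $(cq\rho)^q\|\tilde g\|_q^q$, not $\|g\|_2^q$. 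Proving the single-coordinate inequality with error $\sigma_t^{2-q}\|g\|_2^q$ is therefore \emph{not} the same as proving the replacement lemma you wrote down (indeed $\|g\|_2^q\ge(cq\rho)^q\|\tilde g\|_q^q$ in general, so your version is a strictly weaker statement). The fix is short but does require an extra step you did not state: first prove the normalized inequality $\|e+\tfrac{1}{2q}g\|_q^q\le\|e+\tilde g\|_q^q+\sigma_t^{2-q}\|g\|_2^q$ (this is your event-split argument, which goes through); then rescale $g\mapsto 2q\rho\,g$ to obtain $\|e+\rho g\|_q^q\le\|e+2q\rho\,\tilde g\|_q^q+\sigma_t^{2-q}(2q\rho)^q\|g\|_2^q$; and finally observe $\|g\|_2=\|\tilde g\|_2\le\|\tilde g\|_q$, so the right-hand error is at most $\sigma_t^{2-q}(2q\rho)^q\|\tilde g\|_q^q$, which is exactly the conditioned form of your displayed replacement lemma (with $c=2$). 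With this repaired, the induction closes as in Lemma~\ref{qhypinduct} and the terminal step applies standard cube hypercontractivity to $\mathrm{T}_{2q\rho}(\mathrm{L}_Sf)_n$, provided $2q\rho\le1/\sqrt{q-1}$, which the hypothesis $\rho\le 1/(8q^{1.5})$ comfortably ensures.
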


 The idea of the proof is as follows.
 We encode our function $f\in L^2(\Omega,\nu)$
 as a function $\tilde{f}:=\sum_S\|f^{=S}\|_2\chi_S$
 for appropriate $\chi_S=\prod_{i\in S}\chi_i$
 (in fact, these will be biased characters on the cube).
 We then bound $\| \mathrm{T}_\rho f\|_q $
 by $\| \mathrm{T}_\rho \tilde{f} \|_q$
 and use Theorem \ref{thm:Hypercontractivity with
   different norms} to bound the latter norm.

The main technical component of the theorem
is the following proposition.

\begin{prop} \label{prop:reduction}
  Let $g\in L^2(\Omega, \nu)$ let $\chi_S=\prod_{i\in S}\chi_i$, where
  $\chi_i$ are independent random variables having expectation $0$,
  variance $1$, and
  satisfying $\mathbb{E}[\chi_S^j]\ge \sigma_S^{2-j}$ for each integer
  $j\in \left(2, q \right]$. Let $\tilde{g}=\sum_{S\subset [n]} \|g^{=S}\|_2 \chi_S$.
  Then
  \[
    \|g\|_q\leq \|\tilde{g}\|_q.
    \]
  \end{prop}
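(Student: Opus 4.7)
The plan is a Lindeberg-style coordinate replacement, analogous to the argument used in Section~\ref{sec:hyp}. I will define a sequence of intermediate functions $g^{(0)} = g, g^{(1)}, \ldots, g^{(n)}$, where $g^{(t)}$ is a function of $(\chi_1, \ldots, \chi_t, x_{t+1}, \ldots, x_n)$, and prove $\|g^{(t-1)}\|_q \le \|g^{(t)}\|_q$ at each step, ultimately aiming for $\|g^{(n)}\|_q \le \|\tilde g\|_q$. At step $t$, I condition on $(\chi_{<t}, x_{>t})$ and decompose $g^{(t-1)} = A + B$, where $A(\chi_{<t}, x_{>t}) = \mathbb{E}_{x_t}[g^{(t-1)}]$ is constant in $x_t$ and $B$ has mean zero in $x_t$; then I set $g^{(t)} := A + \chi_t \cdot \|B\|_{L^2(\nu_t)}$, with the $L^2$ norm taken in $x_t$ alone.

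Expanding both $(A+B)^q$ and $(A+\chi_t\|B\|_{L^2(\nu_t)})^q$ binomially and integrating in $x_t$ or $\chi_t$ respectively, the required pointwise inequality reduces to comparing $\mathbb{E}_{x_t}[B^k]$ with $\|B\|_{L^2(\nu_t)}^k \, \mathbb{E}[\chi_t^k]$ for each integer $k \in [0, q]$. The central tool is a one-dimensional moment bound: for any mean-zero $f \in L^2(\Omega_t, \nu_t)$ and integer $k \ge 2$, $|\mathbb{E}[f^k]| \le \sigma_t^{2-k} \|f\|_2^k$. This follows from the pointwise estimate $|f(\omega)| \le \|f\|_2 \sqrt{(1-\nu_t(\omega))/\nu_t(\omega)}$ (an application of Cauchy--Schwarz to the mean-zero identity $f(\omega)\nu_t(\omega) = -\sum_{\omega'\ne\omega} f(\omega')\nu_t(\omega')$), which gives $\|f\|_\infty \le \sigma_t^{-1}\|f\|_2$ and hence $|\mathbb{E}[f^k]| \le \|f\|_\infty^{k-2}\|f\|_2^2 \le \sigma_t^{2-k}\|f\|_2^k$. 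Together with the hypothesis $\mathbb{E}[\chi_t^k] \ge \sigma_t^{2-k}$ for $k \in (2, q]$ (and the trivial cases $k \in \{0,1,2\}$), this matches exactly what the expansion demands.

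The main obstacle is the sign of $A^{q-k}$ for odd $k$: when $A < 0$, the binomial term $\binom{q}{k} A^{q-k} \mathbb{E}_{x_t}[B^k]$ can exceed $\binom{q}{k} A^{q-k} \|B\|_{L^2(\nu_t)}^k \mathbb{E}[\chi_t^k]$, because the two sides flip sign differently. I expect to resolve this by defining the intermediate with $|A|$ in place of $A$, so that the $\chi_t$-side contribution to each binomial term is nonnegative, while the 1D moment bound on $\mathbb{E}_{x_t}[B^k]$ controls the $B$-side in absolute value. This restores the term-by-term inequality and yields $\|g^{(t-1)}\|_q \le \|g^{(t)}\|_q$ pointwise after conditioning.

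A final concern is that the iteratively-built $g^{(n)}$ does not literally coincide with $\tilde g$, because the successive $\|B\|_{L^2(\nu_t)}$ factors and absolute-value operations introduce non-multilinear dependence on the $\chi_i$'s. Closing the gap $\|g^{(n)}\|_q \le \|\tilde g\|_q$ will require an auxiliary argument tracking how the aggregated $L^2$ mass of the hierarchical decomposition of $g^{(n)}$ matches the flat Efron--Stein coefficients $\|g^{=S}\|_2$ defining $\tilde g$; I anticipate this step, rather than the 1D comparison, will be the most delicate part of the proof.
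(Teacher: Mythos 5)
Your strategy is a genuine departure from the paper's: the paper expands $\|g\|_q^q=\mathbb{E}[g^q]$ (recall $q$ is even in Theorem~\ref{thm:es}) as a sum over $q$-tuples $(S_1,\dots,S_q)$ and compares each summand $\mathbb{E}[\prod_i g^{=S_i}]$ directly with the corresponding term $\prod_i\|g^{=S_i}\|_2\,\mathbb{E}[\prod_i\chi_{S_i}]$ of $\|\tilde g\|_q^q$, with the heavy lifting done by Lemma~\ref{lem:tough upper bound}. You instead try a coordinate-by-coordinate Lindeberg replacement. Your one-dimensional moment bound is correct, and the replacement inequality $\|g^{(t-1)}\|_q\le\|g^{(t)}\|_q$ (with the $|A|$ fix) does go through, so the first half of your argument is sound.

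The gap you flag at the end is, however, a genuine and essentially fatal one, not merely a delicate detail. The Lindeberg device in Section~\ref{sec:hyp} works because the $p$-biased Fourier expansion already has tensor structure, so swapping $\chi_i^p\mapsto\chi_i^{1/2}$ one index at a time keeps the interpolant multilinear and lands exactly on the target. Here the Efron--Stein components $g^{=S}$ do not factor across coordinates, so your $g^{(t)}=|A|+\chi_t\|B\|_{L^2(\nu_t)}$ is not a coefficient replacement: the quantity $\|B\|_{L^2(\nu_t)}$ is a new, nonlinear function of the remaining variables, and the absolute value $|A|$ adds further non-multilinearity. Iterating produces nested square roots and absolute values in the $\chi_i$'s, so $g^{(n)}$ is nothing like the multilinear polynomial $\tilde g=\sum_S\|g^{=S}\|_2\chi_S$. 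Already with $n=2$ and $g=g^{=\{1,2\}}$ a pure second-order component, one finds $g^{(2)}=|\chi_1|\big(\alpha+\beta\chi_2\big)$ with $\alpha^2+\beta^2=\|g^{=\{1,2\}}\|_2^2$ but $\alpha=\mathbb{E}_{x_2}[\,\|g^{=\{1,2\}}(\cdot,x_2)\|_{L^2_{x_1}}\,]$ generally nonzero, versus $\tilde g=\|g^{=\{1,2\}}\|_2\,\chi_1\chi_2$. The needed comparison $\|g^{(2)}\|_q\le\|\tilde g\|_q$ reduces to the inequality $\mathbb{E}[(\alpha+\beta\chi_2)^q]\le(\alpha^2+\beta^2)^{q/2}\mathbb{E}[\chi_2^q]$, which is not among your stated tools, does not follow from the moment hypotheses alone without using structural constraints linking $\alpha,\beta$ to $\sigma_2$, and in any case has no analogue proved for general $n$, where the interpolant becomes far more intricate. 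The paper avoids this entirely: by never forming an intermediate function and instead comparing the two expansions term by term, it only needs Lemmas~\ref{lem:upper bound on f=S} and~\ref{lem:tough upper bound}, both of which are clean inductions that never leave the multilinear world. As written, your proposal is therefore incomplete in a way that is not obviously repairable along the same lines; if you want a Lindeberg-flavoured proof you would have to find an interpolation that stays multilinear, which the lack of tensor structure in the Efron--Stein decomposition obstructs.
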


  Below, we fix $\chi_S$ as in the proposition, and
  let $\tilde{\circ}$ denote the operator mapping a function
  $g\in L^2(\Omega, \nu)$ to the function
  $\sum_{S\subset[n]}g^{=S}\chi_S$.

To prove the proposition, we will expand out
$\|g\|_q^q$ and $\|\tilde{g}\|_q^q$ according to their definitions
and compare similar terms: namely, we show that a term of the form
$\mathbb{E}[\prod_{i=1}^q g^{=S_i}]$ is bounded by the corresponding term
in $\|\tilde{g}\|_q^q$, i.e. $\prod^q_{i=1}\|g^{=S_i}\|_2\mathbb{E}[\prod_{i=1}^q\chi_{S_i}]$. We now establish such a bound.

We begin with identifying cases in which both terms are equal to $0$,
and for that we use the orthogonality of the decomposition
$\{g^{=S}\}_{S\subset [n]}$. Afterwards, we only rely on the fact
that $g^{=S}$ depends only on the coordinates in $S$.

\begin{lem}\label{lem:upper bound on f=S}
  Let $q$ be some integer,  let $g\in L^2(\Omega, \nu)$, and let
  $S_1,\ldots, S_q \subset [n]$ be some sets.
  Suppose that some $j\in [n]$ belongs to exactly one of the sets $S_1,\ldots, S_q$. Then
  \[
    \mathbb{E}\left[\prod_{i=1}^q g^{=S_i}\right]=0 \quad \text{and} \quad
    \mathbb{E}\left[\prod_{i=1}^q\chi_{S_i}\right]=0.
    \]
  \end{lem}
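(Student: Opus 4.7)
The plan is to exploit the single appearance of $j$ in the sets $S_i$: after reindexing, we may assume $j \in S_1$ while $j \notin S_i$ for $i \geq 2$. Write $T = \bigcup_{i \geq 2} S_i$. Then $j \in S_1 \setminus T$, so in particular $S_1 \not\subseteq T$.

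For the first claim, observe that $\prod_{i \geq 2} g^{=S_i}$ depends only on coordinates in $T$, since each individual factor $g^{=S_i}$ depends only on coordinates in $S_i \subseteq T$. On the other hand, the Efron--Stein decomposition has the orthogonality property (stated in the excerpt) that $g^{=S}$ is orthogonal to any function depending only on coordinates in a set not containing $S$. Since $T$ does not contain $S_1$, I conclude
\[
\mathbb{E}\Big[\prod_{i=1}^q g^{=S_i}\Big] = \Big\langle g^{=S_1},\, \prod_{i \geq 2} g^{=S_i}\Big\rangle = 0.
\]

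For the second claim, the argument is purely combinatorial. In the product $\prod_{i=1}^q \chi_{S_i} = \prod_{i=1}^q \prod_{k \in S_i} \chi_k$, the variable $\chi_j$ appears exactly once (contributed by $\chi_{S_1}$), while every other factor is some $\chi_k$ with $k \neq j$. By the assumed independence of the $\chi_k$, the expectation factors as $\mathbb{E}[\chi_j]$ times the expectation of the remaining product, and $\mathbb{E}[\chi_j] = 0$ by hypothesis.

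Both claims are therefore essentially one-line consequences of the relevant orthogonality and independence structure; no significant obstacle is anticipated. The only mild subtlety is that we must invoke the stronger form $S_1 \not\subseteq T$ (rather than just $S_1 \neq T$) of the orthogonality condition, which is ensured by our reindexing so that $j \in S_1 \setminus T$ witnesses the non-containment.
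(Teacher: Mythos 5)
Your proof is correct and matches the paper's own argument essentially line for line: reindex so $j \in S_1$, note $\prod_{i\ge 2} g^{=S_i}$ depends only on coordinates in a set omitting $j$ (hence not containing $S_1$) and invoke the Efron--Stein orthogonality, while the character identity follows by factoring out $\mathbb{E}[\chi_j]=0$ via independence. Nothing further to add.
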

  \begin{proof}
    Assume without loss of generality that $j\in S_1$.
    The second equality $\mathbb{E}\left[\prod_{i=1}^q\chi_{S_i}\right]=0$
    follows by taking expectation over $\chi_j$,
    using the independence between the random variables $\chi_i$.
    For the first equality, observe that the function
    $\prod^{q}_{i=2}g^{=S_i}$ depends only on coordinates in $S_2\cup
    \cdots, S_q \subset [n]\setminus \{j\}$. Hence the properties of
    the Efron--Stein decomposition imply
    \begin{align*}
      0 = \left\langle g^{=S_1}, \prod_{i=2}^q g^{=S_i}\right\rangle
      =\mathbb{E}\left[\prod_{i=1}^q g^{=S_i}\right]. & \qedhere
    \end{align*}
    \end{proof}

    Thus we only need to consider terms corresponding to $S_1,\ldots,S_q$
    in which each coordinate appears in at least two sets.
    To facilitate our inductive proof we work with general functions
    $f_i$ that depend only on coordinates of $S_i$
    (rather than only with the functions of the form $g^{=S_i}$).
    \begin{lem}\label{lem:tough upper bound}
      Let $f_1,\ldots, f_q\in L^2(\Omega, \nu)$ be functions that
      depend on sets $S_1,\ldots, S_q$ respectively. Let $T_i$ for
      $i=3,\ldots, q$ be the set of coordinates covered by the sets
      $S_1,\ldots, S_q$ exactly $i$ times. Then
      \[
        \left|\mathbb{E}\left[\prod_{i=1}^q f_i \right]\right|\le
        \prod_{i=1}^q \|f_i\|_2
        \cdot
        \prod_{j=3}^q\sigma_{T_j}^{2-j}.
        \]
      \end{lem}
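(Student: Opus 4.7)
The plan is to argue by induction on $n' = \bigl|\bigcup_{i=1}^q S_i\bigr|$, the number of coordinates appearing in at least one $S_i$. The base case $n' = 0$ is immediate: all the $f_i$ are constants and every $T_j$ is empty, so the bound reduces to $|\prod_i f_i| \le \prod_i |f_i|$.

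For the inductive step, pick any coordinate $c \in \bigcup_i S_i$ and let $k = k(c)$ denote its multiplicity, relabelling so that $c \in S_1, \dots, S_k$. Conditioning on $\omega_{-c}$, set $g_i(\omega_{-c}) = \bigl(\mathbb{E}_{\omega_c}[f_i^2 \mid \omega_{-c}]\bigr)^{1/2}$ for $i \le k$, so that $\|g_i\|_2 = \|f_i\|_2$ and $g_i$ depends only on $S_i \setminus \{c\}$. I then bound $\bigl|\mathbb{E}_{\omega_c}[\prod_{i \le k} f_i \mid \omega_{-c}]\bigr|$ pointwise in $\omega_{-c}$ by $C_c \prod_{i \le k} g_i(\omega_{-c})$ for a scalar $C_c$ to be determined as follows: if $k = 1$, by Jensen's inequality with $C_c = 1$; if $k = 2$, by Cauchy--Schwarz in $\omega_c$, again with $C_c = 1$; if $k \ge 3$, by generalised H\"older in $\omega_c$ combined with the pointwise inequality $\|h\|_{L^k(\nu_c)} \le p_c^{-(k-2)/(2k)} \|h\|_{L^2(\nu_c)}$ (which in turn follows from $\|h\|_\infty \le \|h\|_2/\sqrt{p_c}$), yielding $C_c = p_c^{(2-k)/2} \le \sigma_c^{2-k}$, where the last inequality uses $\sigma_c^2 \le p_c$ for $p_c \le 1/2$.

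Substituting these pointwise bounds into the identity $\mathbb{E}[\prod_i f_i] = \mathbb{E}_{\omega_{-c}}\bigl[\mathbb{E}_{\omega_c}[\prod_{i\le k} f_i \mid \omega_{-c}]\prod_{i>k} f_i\bigr]$ and invoking the inductive hypothesis on the family $(g_1, \dots, g_k, |f_{k+1}|, \dots, |f_q|)$ defined on $\omega_{-c}$, in which $c$ now has multiplicity $0$ and every other coordinate retains its original multiplicity, gives a bound of the form $\prod_i \|f_i\|_2 \cdot \prod_{j \ge 3} \sigma_{T_j'}^{2-j}$, where $T_j' = T_j$ for $j \ne k$ and $T_k' = T_k \setminus \{c\}$. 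Multiplying by $C_c$ then recovers exactly the target $\prod_i\|f_i\|_2 \cdot \prod_{j\ge 3}\sigma_{T_j}^{2-j}$: for $k \le 2$ we have $C_c = 1$ and $k \notin \{3,\dots,q\}$, so the product over $j \ge 3$ is unaffected; for $k \ge 3$ the ratio $\sigma_{T_k}^{2-k}/\sigma_{T_k'}^{2-k}$ equals $\sigma_c^{2-k}$, exactly matching the upper bound on $C_c$.

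The main technical subtlety is the $k \ge 3$ step, where it is essential that generalised H\"older together with the $L^\infty$-to-$L^2$ comparison produces a loss no worse than $\sigma_c^{2-k}$; this is where the hypothesis $p \le 1/2$ is used crucially. Everything else is careful bookkeeping of how the sets $T_j$ evolve as coordinates are processed one at a time.
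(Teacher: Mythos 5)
Your proof is correct, and it follows essentially the same route as the paper's: both arguments condition on all coordinates except one, bound the inner single-coordinate expectation by (generalised) H\"older together with an $L^k\to L^2$ moment comparison that costs a factor $\sigma_c^{2-k}$ when the chosen coordinate has multiplicity $k\ge 3$, and then induct on the remaining coordinates applied to the conditional $L^2$-norm functions $g_i$. The only differences are cosmetic: you induct on $|\bigcup_i S_i|$ rather than on $n$, you derive the single-coordinate moment bound via $\|h\|_\infty\le\|h\|_2/\sqrt{p_c}$ rather than via the paper's rescaled measure $\tilde\nu=\nu/p$, and you spell out the $k=1,2$ cases (Jensen, Cauchy--Schwarz) explicitly, which the paper elides by removing constant factors and implicitly using H\"older at the correct exponent $k$.
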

      \begin{proof}
        The proof is by induction on $n$, simultaneously for all functions.
        We start with the case $n=1$, which we prove
        by reducing to the case that all $f_i$ are eqal.
        \subsection*{The case $n=1$.}
        Here each $f_i$ either depends on a single input
        or is constant and depends only on the empty set.
        We may assume that none of the $f_i$'s is constant,
        as otherwise we may eliminate it from the inequality
        by dividing by $|f_i|$.
        By the generalised H\"{o}lder inequality we have
        \[
         \left|\mathbb{E}\left[\prod_{i=1}^q f_i \right]\right| \le \prod_{i=1}^q\| f_i \|_q. \label{holder}
         \]
        Hence the case $n=1$ of the lemma will follow once
        we prove it assuming all the $f_i$ are equal.
        \subsection*{The $n=1$ case with equal $f_i$'s}
        We show that if $(\Omega, \nu)$ is a discrete probability
        space in which any atom has probability at least
        $p$, then  $\|f\|_q^q \le
        \|f\|_2^q\sigma^{2-q}$, where $\sigma=\sqrt{p(1-p)}$.

        While the inequality $\|f\|_2\le
        \|f\|_q$ holds in any probability space,
        the reverse inequality holds in any measure space
        where each atom has measure at least $1$.
        Accordingly, we consider the measure $\tilde{\nu}$
        on $\Omega$ defined by $\tilde{\nu}(x)=\nu(x) p^{-1}$. Then
        \[
          \|f\|_{q,\nu}^q=
        p\|f\|_{q,\tilde{\nu}}^q \le
        p\|f\|^q_{2, \tilde{\nu}} =
        p^{1-\frac{q}{2}}\|f\|_{2,\nu}^q \le
        \sigma^{2-q} \|f\|_{2,\nu}^{q}.
      \]
      This completes the proof of the $n=1$ case.
       \subsection*{The inductive step}
       Let $f_1,\ldots,f_q \in L^2(\Omega, \nu)$ be functions. Let
       $\mathbf{x} \sim \prod_{i=1}^{n-1} (\Omega _i,\nu_i)$. By the
       $n=1$ case we have:
       \[
        \left|\mathbb{E}\left[\prod_{i=1}^q f_i\right]\right|
        =\left|\mathbb{E}_{\mathbf{x}}
        \left[\mathbb{E}
        \left[\prod_{i=1}^q (f_i)_{[n-1]\to \mathbf{x}}
        \right]\right]\right|
        \le
        \mathbb{E}_{\mathbf{x}}
        \left[\prod_{i=1}^q\| (f_i)_{[n-1]\to
          \mathbf{x}}\|_2\sigma_n^{j}
          \right],
       \]
       where $j$ is $2-i$ if $n\in T_i$ for $i\geq 3$,
       and otherwise $0$.
       The lemma now follows by applying the inductive
       hypothesis on the functions
       $\mathbf{x}\rightarrow \|(f_i)_{[n-1]\to \mathbf{x}}\|$
       and using
       $\left\| \left\|(f_i)_{[n-1]\to \mathbf{x}} \right\|_2
       \right\|_{2,\mathbf{x}}= \|f_i\|_2$.
        \end{proof}

    \begin{proof}[Proof of Proposition \ref{prop:reduction}]
     We wish to upper bound
     \[
       \mathbb{E}[g^q]=\sum_{S_1,\ldots,S_q}\mathbb{E}\left[\prod_{i=1}^q
       g^{=S_i}\right]
     \]
     by
     \[
       \sum_{S_1,\ldots,S_q}\mathbb{E}\left[\prod_{i=1}^q
       \chi_{S_i}\right]\prod_{i=1}^q \|g^{=S_i}\|_2.
     \]
     We upper bound each term participating in the expansion of $g^q$
     by the corresponding term in $\tilde{g}^q$.
     In the case the sets $S_i$ cover some element exactly once,
     Lemma \ref{lem:upper bound on f=S} implies that both terms are
     $0$. Otherwise, the sets $S_i$ cover each element either $0$
     times or at least $2$ times; let $T_i$ be the set of elements of $S_1,\ldots,S_q$
     appearing in exactly $i$ of the sets (as in Lemma \ref{lem:tough upper bound}).
     By the  assumption of the proposition, we have $\mathbb{E}\left[\prod_{i=1}^q
     \chi_{S_i}\right]\ge \prod_{i=3}^q\sigma_{T_i}^{2-|T_i|}$.
     The proof is concluded by combining this with the upper bound
     on $\mathbb{E}\left[\prod_{i=1}^q g^{={S_i}}\right]$
     following from Lemma \ref{lem:tough upper bound} with $f_i = g^{=S_i}$.
   \end{proof}

   \begin{proof}[Proof of Theorem \ref{thm:es}]
 Let $\sigma_i'=\sqrt{p_i/4(1-p_i/4)}$.
 We choose $\chi_i$ to be the
 $\frac{p_i}{4}$-biased character,
 $\chi_i = \frac{x_i-p_i/4}{\sigma_i'}$.
 Clearly $\chi_i$ has mean $0$ and variance
 $1$, and a direct computation shows that
 $\mathbb{E}\left[\chi_i^j\right]
 \geq (\sigma_i)^{2-j}$ for all integer $j> 2$,
 hence all of the conditions of Proposition
 \ref{prop:reduction} hold.

 Denote $\sigma'_S=\prod_{i\in S}\sigma'_i$ and set $h=T_{\frac{1}{4}}f$, $g=\mathrm{T}_{\frac{1}{2q^{1.5}}} h$.
By Proposition \ref{prop:reduction} and Theorem
\ref{thm:qth moment with different norms} we have

\[
\| \mathrm{T}_{\frac{1}{8q^{1.5}  }   } f \|_q^q = \| g \|_q^q
\le \|\tilde{g} \|_q^q \le \sum_S (\sigma'_S)^{2-q} \|\mathrm{D}_S[\tilde{h}]\|_2.
\]
We note that by Parseval, the $2$-norm of $\tilde{h}$ and its derivatives are
equal to the $2$-norm of $h$ and its Laplacians,
and thus the last sum is equal to
\[
\sum_{S}(\sigma'_S)^{2-q} \| \mathrm{L}_S[h]\|_2^q \le
\sum_{S}(\sigma_S)^{2-q} \| \mathrm{L}_S[f]\|_2^q.
\]
In the last inequality we used $\sigma'_S\geq 2^{-|S|} \sigma_S$ and
$\| \mathrm{L}_S[h]\|^{q}\leq 2^{-q|S|} \| \mathrm{L}_S[f]\|_2^q$
(which follows from Parseval).
This completes the proof of the theorem.
\end{proof}

\section{An invariance principle (for global functions)}
\label{sec:inv}

Invariance (also known as Universality) is a fundamental paradigm
in Probability, describing the phenomenon that many random processes
converge to a specific distribution
that is the same for many different instances of the process.
The prototypical example is the Berry-Esseen Theorem,
giving a quantitative version of the Central Limit Theorem
(see e.g.\ \cite[Section 11.5]{o2014analysis}). More sophisticated instances of
the phenomenon that have been particularly influential on recent research
in several areas of Mathematics include the universality of
Wigner's semicircle law for random matrices (see \cite{mehta2004book})
and of Schramm--Loewner evolution (SLE)
e.g.\ in critical percolation (see \cite{smirnov2006survey}).

In the context of the cube, the Invariance Principle is a powerful tool
developed by Mossel, O'Donnell and Oleszkiewicz \cite{mossel2010noise}
while proving their `Majority is Stablest' Theorem,
which can be viewed as an isoperimetric theorem for the noise operator.
Roughly speaking, the result (in a more general form due to
Mossel \cite{mossel2010gaussian}) is that `majority functions'
(characteristic functions of Hamming balls) minimise noise sensitivity
among functions that are `far from being dictators'.
The Invariance Principle converts many problems on the cube
to equivalent problems in Gaussian Space;
in particular, `Majority is Stablest' is converted
into an isoperimetric problem in Gaussian Space
which was solved by a classical theorem
of Borell \cite{borell1985geometric}
(half-spaces are isoperimetric).

In the basic form (see \cite[Section 11.6]{o2014analysis})
of the Invariance Principle,
we consider a multilinear real-valued polynomial $f$ of degree $\le k$
and wish to compare $f(\xb)$ to $f(\yb)$,
where $\xb$ and $\yb$ are random vectors
each having independent coordinates,
according a smooth (to third order) test function $\phi$.
(Comparison of the cumulative distributions requires $\phi$ to be
a step function, but this can be handled by smooth approximation.)
The version of \cite[Remark 11.66]{o2014analysis}
 shows that if the coordinates $x_i$
have mean $0$, variance $1$ and are suitably hypercontractive
(satisfy $\|a + \rho b x_i\|_3 \le \|a + bx_i\|_2$
for any $a,b \in \mb{R}$), and similarly for $y_i$, then
\begin{equation} \label{eq:rem}
\big| \mb{E}[\phi(f(\xb))] - \mb{E}[\phi(f(\yb))] \big|
\le \tfrac{1}{3} \|\phi'''\|_\infty \rho^{-3k}
\sum_{i \in [n]} \mathrm{I}_i(f)^{3/2}.
\end{equation}

The hypercontractivity assumption applies e.g.\
if the coordinates are standard Gaussians or $p$-biased bits
(renormalised to have mean $0$ and variance $1$) with $p$ bounded away
from $0$ or $1$, but if $p=o(1)$ then we need $\rho=o(1)$,
in which case their theorem becomes ineffective.
We will apply our hypercontractivity inequality to obtain
an invariance principle that is effective for small
probabilities and functions with small generalised influences.
We adopt the following setup.

\begin{setup} \label{set:inv}
Let $\sigma_1,\ldots,\sigma_n>0$, let
$\mathbf{X}=(\mathbf{X}_1,\dots, \mathbf{X}_n)$ and
$\mathbf{Y}=(\mathbf{Y}_1,\ldots, \mathbf{Y}_n)$ be random vectors
with independent coordinates, where each
$X_i$ and $Y_i$ are real-valued random variable with mean $0$, variance $1$,
and satisfy $\|X_i\|_3^3 \le \sigma_i^{-1}$ and $\|Y_i\|_3^3\le \sigma_i^{-1}$.
Let $f \in \mb{R}[v]$ be a multilinear polynomial
of degree $d$ in $n$ variables $v=(v_1,\dots,v_n)$.
Let $\phi:\mb{R} \to \mb{R}$ be smooth.
\end{setup}

For $S \sub [n]$ we write $\hat{f}(S)$ for the
coefficient in $f$ of $v_S = \prod_{i \in S} v_i$.
We write $W_{S}(f)= \sum_{J: S \sub J} \hat{f}(J)^2$
and similarly to Section \ref{subsec:multilinear} we define the
generalised influences by
$\mathrm{I}_S(f) = W_{S}(f) \prod_{i \in S} \sS_{i}^{-2}$.

We write $\mathrm{T}_\rho[f]=\sum_{S\sub [n]}\rho^{|S|}\hat{f}(S)v_S$.

Now we state our invariance principle,
which compares $f(\mathbf{X})$ to $f(\mathbf{Y})$.

\begin{thm}\label{thm:inv}
Under Setup \ref{set:inv}, if $\mathrm{I}_S[f]\le \epsilon$ for each
nonempty set $S$, then
\[
  \left| \mb{E}[\phi(f(\mathbf{X}))] - \mb{E}[\phi(f(\mathbf{Y}))]
  \right|
  \le 2^{5d} \|\phi'''\|_\infty W_{\emptyset}(f) \sqrt{\epsilon}.
\]
\end{thm}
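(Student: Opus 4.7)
The plan is to adapt the classical Lindeberg replacement method, using our generalised global hypercontractive inequality (Lemma~\ref{lem:q-applying_hypercontractivity}) as the key estimate to control the third-order error. Introduce hybrid vectors
\[
\mathbf{Z}^{(i)} = (\mathbf{Y}_1,\ldots,\mathbf{Y}_{i-1},\mathbf{X}_i,\mathbf{X}_{i+1},\ldots,\mathbf{X}_n),
\]
so that $\mathbf{Z}^{(1)}=\mathbf{X}$ and $\mathbf{Z}^{(n+1)}=\mathbf{Y}$, and telescope
\[
\mathbb{E}[\phi(f(\mathbf{X}))]-\mathbb{E}[\phi(f(\mathbf{Y}))]
=\sum_{i=1}^n\Big(\mathbb{E}[\phi(f(\mathbf{Z}^{(i)}))]-\mathbb{E}[\phi(f(\mathbf{Z}^{(i+1)}))]\Big).
\]
For each $i$ I would exploit multilinearity to write $f(\mathbf{Z}^{(i)})=A_i+\mathbf{X}_i B_i$ and $f(\mathbf{Z}^{(i+1)})=A_i+\mathbf{Y}_i B_i$, where $A_i$ and $B_i$ depend only on the coordinates $\mathbf{Y}_j$ ($j<i$) and $\mathbf{X}_j$ ($j>i$). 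In particular $B_i=\sum_{S\ni i}\hat f(S)\prod_{j\in S\setminus\{i\}}\mathbf{Z}_j$ is a multilinear polynomial of degree $\le d-1$ that is independent of the pair $(\mathbf{X}_i,\mathbf{Y}_i)$.

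The next step is the Taylor expansion of $\phi$ around $A_i$ to third order. Because $\mathbf{X}_i$ and $\mathbf{Y}_i$ are independent of $(A_i,B_i)$, share the common moments $\mathbb{E}[\mathbf{X}_i]=\mathbb{E}[\mathbf{Y}_i]=0$ and $\mathbb{E}[\mathbf{X}_i^2]=\mathbb{E}[\mathbf{Y}_i^2]=1$, the zeroth, first and second order contributions cancel exactly when one subtracts the two expectations, leaving the uniform remainder bound
\[
\Big|\mathbb{E}[\phi(f(\mathbf{Z}^{(i)}))]-\mathbb{E}[\phi(f(\mathbf{Z}^{(i+1)}))]\Big|
\le \tfrac{\|\phi'''\|_\infty}{6}\Big(\mathbb{E}[|\mathbf{X}_i|^3]+\mathbb{E}[|\mathbf{Y}_i|^3]\Big)\,\mathbb{E}[|B_i|^3]
\le \tfrac{\|\phi'''\|_\infty}{3}\sigma_i^{-1}\|B_i\|_3^3,
\]
using the hypothesis $\|\mathbf{X}_i\|_3^3,\|\mathbf{Y}_i\|_3^3\le\sigma_i^{-1}$.

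The main substantive step is to estimate $\|B_i\|_3^3$. The generalised influences of $B_i$ are computed directly from its Fourier expansion: for any $T\subset[n]\setminus\{i\}$,
\[
\mathrm{I}_T(B_i)=W_T(B_i)\prod_{j\in T}\sigma_j^{-2}=W_{T\cup\{i\}}(f)\prod_{j\in T}\sigma_j^{-2}=\sigma_i^2\,\mathrm{I}_{T\cup\{i\}}(f)\le \sigma_i^2\varepsilon.
\]
Since $B_i$ has degree $\le d-1$ and the ambient random variables are hypercontractive in the appropriate sense, Lemma~\ref{lem:q-applying_hypercontractivity} with $q=3$ and $\delta=\sigma_i^2\varepsilon$ yields
\[
\|B_i\|_3\le 6^{1.5(d-1)}(\sigma_i^2\varepsilon)^{1/6}\|B_i\|_2^{2/3},
\qquad\text{hence}\qquad
\|B_i\|_3^3\le 6^{4.5(d-1)}\sigma_i\sqrt{\varepsilon}\,\|B_i\|_2^2.
\]
Crucially, the factor $\sigma_i$ that emerges from the weak hypercontractivity at coordinate $i$ cancels against the $\sigma_i^{-1}$ produced by $\|\mathbf{X}_i\|_3^3$, leaving a bound $\le C^{d}\sqrt{\varepsilon}\,\|B_i\|_2^2$ that no longer sees the (potentially very small) variance $\sigma_i^2$. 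Finally, summing over $i$ and using
\[
\sum_{i=1}^n\|B_i\|_2^2=\sum_{i=1}^n\sum_{S\ni i}\hat f(S)^2=\sum_S|S|\hat f(S)^2\le d\,W_\emptyset(f)
\]
delivers a total bound of the shape $C^d\|\phi'''\|_\infty W_\emptyset(f)\sqrt{\varepsilon}$; tracking the constants carefully (and absorbing the factor $d$ into the exponential) should produce the claimed $2^{5d}$. The principal obstacle, and the reason Mossel--O'Donnell--Oleszkiewicz's original invariance principle breaks down for $p=o(1)$, is precisely the cancellation in the preceding display: without the generalised-influence viewpoint, one would pay an ineradicable factor $\sigma_i^{-1}$ per coordinate, and the bound would be useless. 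Our Lemma~\ref{lem:q-applying_hypercontractivity} is designed exactly to deliver the compensating $\sigma_i$, which is why this proof goes through in the $p$-biased regime.
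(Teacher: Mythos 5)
Your proposal is correct and follows essentially the same route as the paper's own proof: the same Lindeberg telescoping over hybrid vectors, the same third-order Taylor cancellation reducing the $t$-th step to a bound of the form $\tfrac13\|\phi'''\|_\infty\sigma_t^{-1}\|\mathrm{D}_t f\|_3^3$, and the same key step of computing $\mathrm{I}_T(\mathrm{D}_t f)\le\sigma_t^2\varepsilon$ and invoking Lemma~\ref{lem:q-applying_hypercontractivity} with $q=3$ so that the factor $\sigma_t$ produced by weak hypercontractivity cancels the $\sigma_t^{-1}$ from the third moments, followed by summing $\|\mathrm{D}_t f\|_2^2$ to $\sum_S|S|\hat f(S)^2\le d\,W_\emptyset(f)$. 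The only cosmetic difference is that you track the degree of $\mathrm{D}_t f$ as $d-1$ rather than $d$, which if anything gives a slightly better constant; like the paper, you leave the exact constant tracking loose, which is fine since the dependence is of the form $C^d$ either way.
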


The term $W_{\emptyset}(f)$ can be replaced by either
$\mathbb{E}[f(\mathbf{X})^2]$ or
$\mathbb{E}[f(\mathbf{Y})^2]$ as they are all equal.

\skipi

Theorem \ref{thm:inv} can be informally interpreted as saying that if
a multilinear, low degree polynomial $f$ is global, then the distribution
of $f(\mathbf{X})$ does not really depend on the distribution of $\mathbf{X}$
except for the mean and variance of each coordinate.
%than no matter what random
%variables you plug inside of it you would still get the same random variables
%up to some minor changes.

In particular, it implies that plugging in the $p$-biased characters into
$f$ results in a fairly similar distribution to the one resulting from plugging in
the uniform characters into $f$. A posteriori, this may be seen as an intuitive explanation
for Theorem \ref{thm:Hypercontractivity}, as the standard hypercontractivity
theorem holds in the uniform cube.

\skipi
Next, we set up some notations and preliminary observations for the proof of Theorem
\ref{thm:inv}.
Throughout we fix
$\mathbf{X}$, $\mathbf{Y}$, $f$, and $\phi$ as in Setup \ref{set:inv}. We write
$\mathbf{X}_S=\prod_{i\in S}\mathbf{X}_i$, and similarly for $\mathbf{Y}$.
Recall that $f = \sum_S \hat{f}(S) v_S$ is a (formal) multilinear polynomial in $\mb{R}[v]$ of degree $d$.
Note that $f(\mathbf{X}) = \sum_S \hat{f}(S)\mathbf{X}_S$
has $\mb{E}[f(\mathbf{X})^2] = \sum_S \hat{f}(S)^2$,
as $\mb{E}\mathbf{X}_S^2 = 1$
and $\mb{E}[\mathbf{X}_S \mathbf{X}_T] = 0$ for $S \ne T$.
The random variable $f(\mathbf{X})$ has the orthogonal decomposition
$f = \sum_S f^{=S}$ with each $f^{=S} = \hat{f}(S)\mathbf{X}_S$.
Further note that $\mathrm{L}_S f(\mathbf{X}) = \sum_{J: S \sub J} \hat{f}(J)\mathbf{X}_J$
so we have the identities
\[
  \mathrm{I}_S(f) \prod_{i \in S} \sS_i^2 =
  \mb{E}[(\mathrm{L}_S f(\mathbf{X}))^2]=
  \mb{E}[(\mathrm{L}_S f(\mathbf{Y}))^2]=
  \sum_{J: S \sub J} \hat{f}(J)^2 = W^{S^\ua}(f).
  \]

We apply the replacement method
as in Section \ref{sec:hyp} (and as in the proof
of the original invariance principle by
Mossel, O'Donnell and Oleszkiewicz \cite{mossel2010noise}).
For $0 \le t \le n$, define
$\mathbf{Z}^{:t} = (\mathbf{Z}^{:t}_1,\dots,\mathbf{Z}^{:t}_n)
= (\mathbf{Y}_1,...,\mathbf{Y}_t,\mathbf{X}_{t+1},...,\mathbf{X}_n)$,
and note that $f(\mathbf{Z}^{:t})$ has the orthogonal decomposition
$f(\mathbf{Z}^{:t}) = \sum_S f(\mathbf{Z}^{:t})^{=S}$ with
\[
  f(\mathbf{Z}^{:t})^{=S} = \hat{f}(S) \mathbf{Z}_S = \hat{f}(S)
  \mathbf{Y}_{S\cap [t]} \mathbf{X}_{S\setminus [t]}.
  \]

\begin{proof}[Proof of Theorem \ref{thm:inv}]
We adapt the exposition in \cite[Section 11.6]{o2014analysis}.
As $\mathbf{Z}^{:0}=\mathbf{X}$ and $\mathbf{Z}^{:n}=\mathbf{Y}$ we have
by telescoping and the triangle inequality
\[
  |\mb{E}[\phi(f(\mathbf{X}))] - \mb{E}[\phi(f(\mathbf{Y}))] |
\le \sum_{t=1}^n
| \mb{E}[\phi(f(\mathbf{Z}^{:t-1}))] - \mb{E}[\phi(f(\mathbf{Z}^{:t}))] |.
\]
Consider any $t \in [n]$ and write
\[ f(\mathbf{Z}^{:t-1}) = U_t + \DD_t \mathbf{Y}_t
\ \ \text{ and } \ \
f(\mathbf{Z}^{:t}) = U_t + \DD_t \mathbf{X}_t,
\ \ \text{ where } \]
\[ U_t = \mathrm{E}_t f(\mathbf{Z}^{:t-1}) = \mathrm{E}_t f(\mathbf{Z}^{:t})
\ \ \text{ and } \ \
\DD_t = \mathrm{D}_t f(\mathbf{Z}^{:t-1}) = \mathrm{D}_t f(\mathbf{Z}^{:t}).
\]
Both of the functions $U_t$ and $\DD_t$ are independent of the random
variables $X_t$ and $Y_t$.

By Taylor's Theorem,
\begin{align*}
\phi(f(\mathbf{Z}^{:t-1})) & = \phi(U_t) + \phi'(U_t) \DD_t \mathbf{Y}_t
+ \tfrac{1}{2} \phi''(U_t) (\DD_t \mathbf{Y}_t)^2
+ \tfrac{1}{6} \phi'''(A) (\DD_t \mathbf{Y}_t)^3,
\ \ \text{ and }  \\
\phi(f(\mathbf{Z}^{:t})) & = \phi(U_t) + \phi'(U_t) \DD_t \mathbf{X}_t
+ \tfrac{1}{2} \phi''(U_t) (\DD_t \mathbf{X}_t)^2
+ \tfrac{1}{6} \phi'''(A') (\DD_t \mathbf{X}_t)^3,
\end{align*}
for some random variables $A$ and $A'$.
As $\mathbf{X}_t$ and $\mathbf{Y}_t$ have mean $0$ and variance $1$ we have
$0 = \mb{E}[\phi'(U_t) \DD_t \mathbf{Y}_t]
= \mb{E}[\phi'(U_t) \DD_t \mathbf{X}_t]$ and
$ \mb{E}[ \phi''(U_t) (\DD_t)^2 ]
= \mb{E}[ \phi''(U_t) (\DD_t \mathbf{Y}_t)^2 ]
= \mb{E}[ \phi''(U_t) (\DD_t \mathbf{X}_t)^2 ]$, so
\[
  | \mb{E}[\phi(f(\mathbf{Z}^{:t-1}))] - \mb{E}[\phi(f(\mathbf{Z}^{:t}))] |
\le \tfrac{1}{6} \|\phi'''\|_\infty
( \mb{E}[|\DD_t \mathbf{X}_t|^3] + \mb{E}[|\DD_t \mathbf{Z}_t|^3] )
\le \tfrac{1}{3} \|\phi'''\|_\infty\sigma_t^{-1} \|\DD_t \|_3^3.
\]

The function $\DD_t$ is the function $\mathrm{D}_t[f]$ applied on random
variables satisfying the hypothesis of Lemma \ref{lem:q-applying_hypercontractivity}. Moreover, $\mathrm{I}_S[\mathrm{D}_{t}[f]]$ is
either 0 when $t\in S$, or $\sigma_t^{2}\mathrm{I}_{S\cup\{t\}}[f]$ when $t\notin
S$, in which case $\mathrm{I}_S[f]\le \sigma_t^{2}\epsilon$. Hence, by
Lemma \ref{lem:q-applying_hypercontractivity} (with $q=3$), we obtain
\[
  \|\DD_t \|_3^3\le
6^{4.5 d}\sigma_t \sqrt{\epsilon}
\|\DD_t \|_2^2=
6^{4.5 d}\sigma_t \sqrt{\epsilon} \cdot \sum_{S\ni t}\hat{f}(S)^2.
\]

Hence,
\[
  \sum_{t=0}^n\tfrac{1}{3} \|\phi'''\|_\infty\sigma_t^{-1} \|\DD_t
  \|_3^3
  \le 6^{4.5 d}\sqrt{\epsilon} \tfrac{1}{3} \|\phi'''\|_\infty
  \sum_{S}|S|\hat{f}(S)^2
  \le 6^{4.5 d}\sqrt{\epsilon}\tfrac{d}{3} \|\phi'''\|_\infty
  W_{\emptyset}(f).
  \]
  This completes the proof of the theorem since
  $6^{4.5d}\frac{d}{3}\le 2^{12 d}$.
\end{proof}

\subsection{Applications of hypercontractivity}

We now list some corollaries of the invariance
principle. Following O'Donnell \cite[Chapter 11]{o2014analysis} one
can easily obtain the following variant of the `majority is stablest'
theorem of Mossel, O'Donnell and Oleszkiewicz \cite{mossel2010noise}
(see also \cite{mossel2010gaussian}).

The $p$-biased \emph{$\alpha$-Hamming ball} on $\{0,1\}^n$ is the function
$H_{\alpha}$ whose value is $1$ on an input $x$ if and only if $x$ has at least $t$ coordinates equal to $1$,
and $t$ is chosen so that $\mu_p(H_{\alpha})$ is as close to $\alpha$ as possible.
\begin{cor}\label{majority is stablest}
  For each $\epsilon>0$, there exists
  $\delta>0$, such that the following holds. Let $\rho\in (\epsilon,
  1-\epsilon)$, let $n>\delta^{-1}$, and let
  $f, g\in L^2(\{0,1\}^n,\mu_p)$. Suppose that $\mathrm{I}_S[f]\le
  \delta$ and that $\mathrm{I}_S[g]\le \delta$
  for each set $S$ of at most $\delta^{-1}$ coordinates. Then
  \[
    \left\langle \mathrm{T}_\rho{f},g \right\rangle \le \left
      \langle \mathrm{T}_\rho H_{\mu_p(f)},
      H_{\mu_p(g)}\right\rangle +\epsilon.
    \]
  \end{cor}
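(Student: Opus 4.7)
My plan is to follow the standard strategy of Mossel--O'Donnell--Oleszkiewicz for proving ``Majority is Stablest'', substituting Theorem~\ref{thm:inv} for their classical invariance principle and Theorem~\ref{thm:Hypercontractivity} in place of the usual $(4,2)$-hypercontractive inequality on the uniform cube. The idea is to move the noise stability computation from the $p$-biased cube to Gaussian space, where Borell's isoperimetric theorem identifies half-spaces as extremizers, and then to match the Gaussian optimum with the corresponding $p$-biased Hamming ball stability.

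First, I would reduce to low-degree polynomials. Apply $\mathrm{T}_{1-\gamma}$ to both $f$ and $g$ for a small $\gamma=\gamma(\epsilon)>0$; this barely changes $\langle \mathrm{T}_\rho f,g\rangle$ but makes the Fourier mass decay geometrically in the degree. Truncating at a level $d=O(\log(1/\epsilon)/\log(1/\rho))$ then changes $\langle \mathrm{T}_\rho f,g\rangle$ by at most $\epsilon/10$ by Cauchy--Schwarz, since $\sum_{|S|>d}\rho^{|S|}|\hat f(S)\hat g(S)|\le \rho^d$. By Lemma~\ref{obs}, the truncated functions still have all generalised influences on sets of size $\le d$ bounded by $\delta$, provided $\delta$ is chosen small enough in terms of $\epsilon$ and $\rho$.

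Second, I would apply the invariance principle. Set up $2n$ independent coordinates arranged in $\rho$-correlated pairs: in the $p$-biased world each pair is $(\mathbf{x}_i,\mathbf{y}_i)$ with the marginals $\mu_p$-distributed and correlation $\rho$, while in the Gaussian world each pair is $(\mathbf{G}_i,\mathbf{G}'_i)$ of jointly standard $\rho$-correlated Gaussians. Expressing $\langle \mathrm{T}_\rho f,g\rangle$ as $\mathbb{E}[f(\mathbf{x})g(\mathbf{y})]$, this becomes the expectation of the multilinear polynomial $F(\mathbf{u},\mathbf{v})=f(\mathbf{u})g(\mathbf{v})$ evaluated on $2n$ coordinates whose $i$-th coordinate is one of these $\rho$-correlated pairs. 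Applying Theorem~\ref{thm:inv} with a smooth approximation of the multiplication $\phi(a,b)=ab$ (truncated and mollified, with third derivative bounded by $\mathrm{poly}(1/\epsilon)$) and using the bound on generalised influences of $f^{\le d}$ and $g^{\le d}$, we see that the $p$-biased stability and the Gaussian stability of the polynomials differ by at most $\epsilon/10$, once $\delta$ is small enough relative to $\epsilon$ and $d$.

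Third, in Gaussian space Borell's theorem upper bounds the noise stability $\mathbb{E}[\tilde f(\mathbf{G})\tilde g(\mathbf{G}')]$ by the corresponding stability of two half-spaces of the same Gaussian measures $\mu_p(f)$ and $\mu_p(g)$. Finally I would apply the invariance principle in the reverse direction to $H_{\mu_p(f)}$ and $H_{\mu_p(g)}$: these are Boolean threshold functions on the $p$-biased cube, and after smoothing via $\mathrm{T}_{1-\gamma}$ they become low-degree and global (since threshold functions on the cube have vanishing generalised influences after smoothing), so their $p$-biased noise stability is within $\epsilon/10$ of the half-space stability in the Gaussian setting. Chaining the three comparisons together yields the claimed inequality.

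The main obstacle is the usual one in this circle of results: rounding. The invariance principle supplies a smooth test function $\phi$, but Borell's inequality concerns indicators, so I must approximate the rounding $\mathbf{1}[z\ge t]$ by a $C^3$-smooth function while controlling the $L^2$ error using the fact that the Gaussian density of $f(\mathbf{G})$ near the threshold is small (antichaining / CLT-type smoothness estimates as in \cite[Ch.~11]{o2014analysis}). A secondary obstacle is verifying that $H_{\mu_p(f)}$ itself admits a valid application of the invariance principle despite being Boolean; this is handled by the same $\mathrm{T}_{1-\gamma}$ smoothing and the hypercontractive bound Theorem~\ref{thm:Hypercontractivity}, which controls the tails after smoothing.
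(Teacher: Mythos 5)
Your high-level plan --- truncate, apply invariance, invoke Borell, reverse invariance for Hamming balls --- is exactly what the paper intends: the authors supply no proof and write only ``The proof goes along the same lines of \cite{mossel2010gaussian}, and we omit it.'' But there is a genuine gap in your Step~2: the result you invoke as Theorem~\ref{thm:inv} does not say what you need.

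Theorem~\ref{thm:inv} (under Setup~\ref{set:inv}) is a \emph{scalar} invariance principle: it concerns a single multilinear polynomial $f$ evaluated on a random vector with \emph{independent real-valued} coordinates, compared through a test function $\phi:\mb{R}\to\mb{R}$. The two-function noise stability $\left\langle \mathrm{T}_\rho f, g\right\rangle = \mb{E}[f(\xb)g(\yb)]$, with $(\xb,\yb)$ $\rho$-correlated, does not fit that template. Your phrase ``set up $2n$ independent coordinates arranged in $\rho$-correlated pairs'' is internally contradictory: what you actually have is $n$ independent \emph{pairs}, so the entries of the $2n$-dimensional vector are \emph{not} independent; moreover your test function $\phi(a,b)=ab$ is bivariate, not a map $\mb{R}\to\mb{R}$. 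What is required is a vector-valued invariance principle in which each of the $n$ independent coordinates takes values in $\mb{R}^2$, the comparison is between the $p$-biased $\rho$-correlated pair and the Gaussian $\rho$-correlated pair, the multilinear map is $(u,v)\mapsto f(u)g(v)$, and $\phi:\mb{R}^2\to\mb{R}$ is smooth. This is precisely the machinery Mossel develops in \cite{mossel2010gaussian}; it can be derived by the same Lindeberg replacement argument with the third-order Taylor remainder controlled coordinate-pair by coordinate-pair via Lemma~\ref{lem:q-applying_hypercontractivity}, but it is a strictly stronger statement than Theorem~\ref{thm:inv} and is not stated or proved in the paper. Your sketch silently treats it as available.

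Two smaller points, neither a gap but both needing an explicit citation. First, passing from the corollary's hypothesis ($\mathrm{I}_S[f]\le\delta$ only for $|S|\le\delta^{-1}$) to Theorem~\ref{thm:inv}'s hypothesis (all nonempty $S$) is what the truncation buys you, via Lemma~\ref{obs}: $\mathrm{I}_S(f^{\le d})\le\mathrm{I}_S(f)$ for all $S$ and $\mathrm{I}_S(f^{\le d})=0$ for $|S|>d$. Second, the Hamming balls $H_{\mu_p(f)}$ are monotone and, for $n$ large, $(r,\delta)$-global by anticoncentration of the $p$-biased binomial at the threshold, so Lemma~\ref{rem} gives the generalised-influence bound you need to run the reverse invariance step; ``global after $\mathrm{T}_{1-\gamma}$ smoothing'' is not the right justification, since smoothing does not create globalness --- monotonicity plus Lemma~\ref{rem} does.
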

  The proof goes along the same lines of \cite{mossel2010gaussian}, and we omit it.
\skipi
As an additional application, one can obtain the following sharp threshold
result for \emph{almost} monotone Boolean functions. This statement
asserts that any such function which is global has a sharp
threshold. Let us remark that we have already established such a
result in the sparse regime (see Section \ref{sec:noise}). On the
other hand, the version below applies in the \emph{dense} regime.

With notation as in Section \ref{sec:noise}, we say that $f$ is
$(\delta,p,q)$-almost monotone if $p<q\in (0,1)$ and choosing
$\mathbf{x},\mathbf{y}\sim D(p,q)$ gives $\Pr[f(\mathbf{y})=0,
f(\mathbf{x})=1]<\delta$. We say that $f$ has an $\epsilon$-coarse
threshold in an interval $[p,q]$ if $\mu_p(f)>\epsilon$ and $\mu_q(f)<1-\epsilon$.

\begin{cor}
  For each $\epsilon>0$, there exists $\delta>0$, such that the
  following holds. Let $p<q<\frac{1}{2}$, and suppose that
  $q>(1+\epsilon)p$. Let $f$ be a $(\delta,p,q)$-almost monotone
  Boolean function having an $\epsilon$-coarse threshold in an
  interval $[p,q]$. Then there exists a set $S$ of size at most
  $\frac{1}{\delta}$, such that $\mathrm{I}_S[f]\ge \delta$ either with
  respect to the $p$-biased measure or with respect to the $q$-biased measure.
\end{cor}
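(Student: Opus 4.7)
The plan is to argue by contradiction: suppose every nonempty $S$ with $|S|\le 1/\delta$ satisfies $\mathrm{I}_S[f]<\delta$ both in $\mu_p$ and in $\mu_q$, and aim to contradict the coarse threshold hypothesis. First I would convert almost-monotonicity into a noise-stability lower bound. For $(\mathbf{x},\mathbf{y})\sim D(p,q)$ one has $f(\mathbf{x})f(\mathbf{y})=f(\mathbf{x})$ unless $f(\mathbf{x})=1$ and $f(\mathbf{y})=0$, which happens with probability $<\delta$; hence $\langle f,\mathrm{T}^{p\to q}f\rangle_{\mu_q}\ge \mu_p(f)-\delta$. Combining Cauchy--Schwarz in $L^2(\mu_q)$ with Lemma~\ref{calcrho}, which identifies $(\mathrm{T}^{p\to q})^{\star}\mathrm{T}^{p\to q}$ with $\mathrm{T}_\rho$ on $L^2(\mu_p)$ for $\rho=p(1-q)/(q(1-p))$, yields the core inequality
\[
(\mu_p(f)-\delta)^2\ \le\ \mu_q(f)\cdot\mathrm{Stab}_\rho(f),
\]
with stability taken under $\mu_p$. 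Since $q>(1+\epsilon)p$ and $q<1/2$, one checks $\rho\le 1/(1+\epsilon)$, bounded away from $1$ in terms of $\epsilon$.

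Next I would apply Corollary~\ref{majority is stablest}, the global Majority-is-Stablest statement, with $g=f$: the hypothesis that all generalised influences of $f$ on sets of size at most $1/\delta$ are below $\delta$ gives $\mathrm{Stab}_\rho(f)\le \mathrm{Stab}_\rho(H_{\mu_p(f)})+\epsilon/4$, and a further application of the invariance principle (Theorem~\ref{thm:inv}) to the Hamming ball replaces the right hand side by the Gaussian stability $\Lambda_\rho(\mu_p(f))+\epsilon/2$. Combining with the preceding display,
\[
\mu_q(f)\ \ge\ \frac{(\mu_p(f)-\delta)^2}{\Lambda_\rho(\mu_p(f))+\epsilon/2}.
\]

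The remaining step, which I anticipate to be the main obstacle, is to show that this bound is inconsistent with the coarse threshold hypothesis $\mu_p(f)>\epsilon$ and $\mu_q(f)<1-\epsilon$ once $\delta=\delta(\epsilon)$ is chosen small enough. Since $\rho$ is bounded away from $1$, Borell's Gaussian isoperimetric inequality provides a quantitative gap between $\Lambda_\rho(\alpha)$ and $\alpha$; in particular $\Lambda_\rho(\alpha)\le \alpha^{2/(1+\rho)}$ for $\alpha\le 1/2$ (with a symmetric estimate for $\alpha\ge 1/2$), so the displayed inequality gives essentially $\mu_q(f)\gtrsim \mu_p(f)^{2\rho/(1+\rho)}$. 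One then needs to exploit this amplification together with the two-sided coarse threshold constraint, possibly by treating the subcases $\mu_p(f)\le 1/2$ and $\mu_p(f)\ge 1/2$ separately and invoking a symmetric argument, e.g.\ applied to $1-f$ under the dual coupling, to cover the regime where $\mu_p(f)$ is only moderately above $\epsilon$. This quantitative Gaussian bookkeeping, rather than any further input from hypercontractivity, is where the real work lies.
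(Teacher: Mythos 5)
Your lower bound $\langle f,\mathrm{T}^{p\to q}f\rangle_{\mu_q}\ge \mu_p(f)-\delta$ from almost-monotonicity is correct and matches the paper's starting point. The divergence, and the gap, is in the next step: you apply Cauchy--Schwarz to pass to the \emph{symmetric} noise stability $\mathrm{Stab}_\rho(f)$ on $\mu_p$ and then invoke Corollary~\ref{majority is stablest} as stated. This loses too much. The resulting inequality $\mu_q(f)\ge (\mu_p(f)-\delta)^2/(\Lambda_\rho(\mu_p(f))+o(1))$ does not force $\mu_q(f)>1-\epsilon$ on the whole range of the hypotheses: for instance with $\alpha:=\mu_p(f)=1/2$ and $\rho$ close to its allowed maximum $1/(1+\epsilon)$ one has $\Lambda_\rho(1/2)$ close to $1/2$, so $\alpha^2/\Lambda_\rho(\alpha)$ is close to $1/2$, far below $1-\epsilon$; the inequality is therefore satisfied by plenty of $(\alpha,\beta)$ with $\beta<1-\epsilon$, and there is no contradiction. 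The ``symmetric argument'' applied to $1-f$ that you sketch as a fallback produces the mirror constraint $1-\alpha\ge (1-\beta)^2/\Lambda_\rho(1-\beta)$, but both constraints degenerate to $\beta\ge\alpha$ as $\rho\uparrow 1$, and again fail to exclude a coarse threshold for moderate densities. The quantitative Gaussian bookkeeping you flag as ``the real work'' is not a matter of bookkeeping: the approach is structurally too weak.

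The paper's proof avoids Cauchy--Schwarz entirely. It works directly with the one-sided quantity $\langle \mathrm{T}^{p\to q}f_1,f_2\rangle_{\mu_q}$ (with $f_1=f$ on $\mu_p$ and $f_2=f$ on $\mu_q$, so both globalness assumptions are used) and invokes a \emph{one-sided} extension of Corollary~\ref{majority is stablest}: under the globalness hypothesis one gets $\langle \mathrm{T}^{p\to q}f_1,f_2\rangle_{\mu_q}\le \langle \mathrm{T}^{p\to q}H_{\mu_p(f)},H_{\mu_q(f)}\rangle_{\mu_q}+o(1)$, where $H_{\mu_p(f)}$ is the $p$-biased Hamming ball of density $\mu_p(f)$ and $H_{\mu_q(f)}$ the $q$-biased Hamming ball of density $\mu_q(f)$ --- crucially \emph{two different thresholds}. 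Because $q>(1+\epsilon)p$ forces the $q$-Hamming ball of density $\mu_q(f)<1-\epsilon$ to have a strictly higher threshold than the $p$-Hamming ball of density $\mu_p(f)>\epsilon$, the one-sided stability of the pair of Hamming balls is bounded \emph{strictly below} $\mu_p(f)$ by $\Omega_\epsilon(1)$, which does contradict the lower bound $\ge \mu_p(f)-\delta$ for $\delta$ small. Cauchy--Schwarz destroys exactly this: after squaring, the two Hamming balls are replaced by a single pair of copies of the same ball, at which point the one-sided gap disappears. To repair your argument you would need to reintroduce the one-sided extremal comparison, which is precisely the route the paper takes.

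Two smaller remarks. First, as written your argument only uses $\mu_p$-globalness, whereas the corollary's conclusion allows the large generalised influence to live in either $\mu_p$ or $\mu_q$; the paper's argument genuinely uses both. Second, the passage from $\mathrm{Stab}_\rho(H_{\mu_p(f)})$ to the Gaussian quantity $\Lambda_\rho$ via Theorem~\ref{thm:inv} needs some care about the error terms (it depends on $n$ and on the Hamming ball being a low-influence function, which fails if $p$ is very small or $n$ small), but this is a secondary issue compared to the Cauchy--Schwarz loss.
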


The proof is similar to the one given by Lifshitz
\cite{lifshitz2018hypergraph}, so we only sketch it.

\begin{proof}[Proof sketch] First we observe that Corollary \ref{majority is
  stablest} extends to the one sided noise operator. Let $f_1=f$ be the
function viewed as a function on the $p$-biased cube, and let $f_2=f$
be the function viewed as a function on the $q$-biased cube. So assuming
for contradiction that $\mathrm{I}_S[f]\le \delta$ for each $S$, we
obtain an upper bound on
$\langle \mathrm{T}^{p\to q}f_1, f_2\rangle_{\mu_q}$ of the form
$\langle \mathrm{T}^{p\to q}H_{\mu_p(f)}, H_{\mu_q(f)}\rangle_{\mu_q}$

However, the $(\delta,p,q)$-almost monotonicity of $f$ implies the
lower bound
$\langle \mathrm{T}^{p\to q}f_1, f_2\rangle_{\mu_q} \rangle \ge
\mu_p(f)-\delta$.

Standard estimates on $\langle \mathrm{T}^{p\to q}H_{\mu_p(f)},
H_{\mu_q(f)}\rangle_{\mu_q}$ show that the lower bound and the upper
bound cannot coexist provided that $\delta$ is sufficiently small (see
\cite{lifshitz2018hypergraph}).
  \end{proof}

\section{Concluding remarks}

We are optimistic that our sharp threshold result
in the sparse regime will have many applications
in the same vein as the applications
of the classical sharp threshold results,
e.g.\ to Percolation \cite{benjamini2012sharp},
Complexity Theory \cite{friedgut1999sharp},
Coding Theory \cite{kudekar2017reed},
and Ramsey Theory \cite{friedgut2016sharp}.

In particular, it may be possible to estimate
the location of thresholds in the spirit of
the Kahn-Kalai conjecture \cite[Conjecture 2.1]{kahn2007thresholds}
that the threshold probability $p_c(H)$
for finding some graph $H$ in $G(n,p)$
should be within a log factor of
its `expectation threshold' $p_E(H)$
(the probability at which every subgraph $H'$ of $H$
we expect at least one copy of $H'$).
This question is interesting when $|V(H)|$
depends on $n$, e.g.\ if $H$ is a bounded degree spanning tree
it predicts $p_c(H) = O(n^{-1}\log n)$,
which was a longstanding open problem, recently
resolved by Montgomery \cite{montgomery2018trees}.

To obtain similar results from our sharp threshold theorem (Theorem \ref{thm:Sharp threshold result}),
one needs to show that the property of containing $H$
is not `local': writing $\mu_p = \mb{P}(H \sub G(n,p))$,
this means that if we plant any set $E$ of
$O(\log \mu_p^{-1})$ edges we still have
$\mb{P}(H \sub G(n,p) \mid E \sub G(n,p)) \le  \mu_p^{O(1)}$.
An open problem is to apply this approach to estimate
other thresholds that are currently unknown,
e.g.\ the threshold for containing any given $H$
of maximum degree $\DD$.

Our variant of the Kahn-Kalai conjecture on isoperimetric stability
is only effective in the $p$-biased setting for small $p$,
whereas the corresponding known results
\cite{keller2018dnf, keevash2018edgeiso}
for the uniform measure are substantial weaker.
This leaves our current state of knowledge
in a rather peculiar state, as in many related problems
the small $p$ case seems harder than the uniform case!
A natural open problem is give a unified approach
extending both results for all $p$.

Our final open problem
is to obtain a generalisation of Hatami's Theorem
to the sparse regime, i.e.\ to obtain a density increase
from $\mu_{p}\left(f\right) = o\left(1\right)$
to $\mu_{q}\left(f\right)\ge 1-\eps$
under some pseudorandomness condition on $f$;
we expect that a such result would have
profound consequences in Extremal Combinatorics.
\subsection*{Acknowledgment} We would like to thank Yuval Filmus, Ehud Friedgut, Gil Kalai, Nathan
Keller, Guy Kindler, and Muli Safra for various helpful comments and suggestions.

\bibliographystyle{plain}
\bibliography{refs}

\medskip

Peter Keevash (keevash@maths.ox.ac.uk),
Mathematical Institute, University of Oxford, UK.

\medskip

Noam Lifshitz (noamlifshitz@gmail.com),
Einstein Institute of Mathematics,
Hebrew University, Jerusalem, Israel.

\medskip

Eoin Long (long@maths.ox.ac.uk),
Mathematical Institute, University of Oxford, UK.

\medskip

Dor Minzer (minzer.dor@gmail.com),
Institute for Advanced Study, Princeton, NJ, USA.

\end{document}